\title{Unital $k$-restricted $\infty$-operads}
\author{Amartya Shekhar Dubey}
\address{School of Mathematical Sciences, National Institute of Science Education and Research, Bhubaneswar, 752050}
\email{amartyashekhar.dubey@niser.ac.in}
\author{Yu Leon Liu}
\address{Department of Mathematics, Harvard University, 1 Oxford St, Cambridge, MA 02139}
\email{yuleonliu@math.harvard.edu}
\numberwithin{equation}{section}
\newcommand{\op}{\mathsf{op}}
\newsavebox{\pullback}
\sbox\pullback{%
\begin{tikzpicture}%
\draw (0,0) -- (1ex,0ex);%
\draw (1ex,0ex) -- (1ex,1ex);%
\end{tikzpicture}}
\DeclareMathOperator{\Hom}{Hom}
\newtheorem{lemma}[equation]{Lemma}
\newtheorem{corollary}[equation]{Corollary}
\newtheorem{proposition}[equation]{Proposition}
\newtheorem{theorem}[equation]{Theorem}
\newtheorem*{lemma*}{Lemma}
\theoremstyle{definition}
\newtheorem{example}[equation]{Example}
\newtheorem{definition}[equation]{Definition}
\newtheorem{observation}[equation]{Observation}
\newtheorem{notation}[equation]{Notation}
\theoremstyle{remark}
\newtheorem{remark}[equation]{Remark}
\crefname{thm}{Theorem}{Theorems}
\crefname{lem}{Lemma}{Lemmas}
\crefname{cor}{Corollary}{Corollaries}
\crefname{prop}{Proposition}{Propositions}
\crefname{ex}{Exercise}{Exercises}
\crefname{exm}{Example}{Examples}
\crefname{defn}{Definition}{Definitions}
\crefname{claim}{Claim}{Claims}
\crefname{rem}{Remark}{Remarks}
\crefname{fct}{Fact}{Facts}
\crefname{note}{Note}{Notes}
\DeclarePairedDelimiter\paren{(}{)}
	\let\oldparen\paren
	\def\paren{\@ifstar{\oldparen}{\oldparen*}}
\newcommand{\cO}{\mathcal{O}}
\newcommand{\cD}{\mcal{D}}
\newcommand{\Fun}{\mathrm{Fun}} 
\newcommand{\Opone}{\mathrm{Op_1}}
\newcommand{\Op}{\mathrm{Op}}
\newcommand{\un}{\mathrm{un}}
\newcommand{\Fact}{\mathrm{Fact}}
\newcommand{\NU}{\mathrm{nu}}
\newcommand{\Triv}{\mathrm{Triv}}
\newcommand{\Mul}{\mathrm{Mul}}
\newcommand{\Opnu}{\Op^{\NU}}
\newcommand{\Opun}{\Op^{\un}}
\newcommand{\Omegac}{\Omega^c}
\newcommand{\cPseg}{\cP^{\mathrm{seg}}}
\newcommand{\AAnu}{\AA^{\mathrm{nu}}}
\newcommand{\Opunleq}[1]{\Opun_{\leq {#1}}}
\newcommand{\cat}{\mathrm{Cat}_{\infty}}
\newcommand{\Opunleqk}{\Opunleq{k}}
\newcommand{\Omegacleq}[1]{\Omegac_{\leq {#1}}}
\newcommand{\CC}[1]{\overline{C_{#1}}}
\newcommand{\leqk}{{\leq k}}
\newcommand{\Omegacleqk}{\Omegacleq{k}}
\newcommand{\Spaces}{\mathcal{S}}
\newcommand{\Lk}{\mathrm{L}_k}
\newcommand{\Rk}{\mathrm{R}_k}
\newcommand{\Seg}{\mathrm{Seg}}
\newcommand{\cpl}{\mathrm{cpl}}
\newcommand{\Free}{\mathrm{Free}}
\newcommand{\Segcpl}{\Seg^{\cpl}}
\tikzset{%
    symbol/.style={%
        draw=none,
        every to/.append style={%
            edge node={node [sloped, allow upside down, auto=false]{$#1$}}}
    }
}
\newcommand{\maxsurj}{\mathrm{max-surj}}
\newcommand{\rmax}{\mathrm{rmax}}
\def\AA{\mathbb A}
\def\EE{\mathbb E}
\def\cC{\mathcal C}\def\cD{\mathcal D}
\def\cF{\mathcal F}
\def\cO{\mathcal O}\def\cP{\mathcal P}
\newcommand{\LKan}{\mathrm{LKan}}
\newcommand{\RKan}{\mathrm{RKan}}
\newcommand{\inc}{\mathrm{inc}}
\newcommand{\glue}{\mathrm{glue}}
\newcommand{\pt}{\mathrm{pt}}
\newcommand{\rsub}{\mathrm{rsub}}
\newcommand{\rsubT}{{^\rsub/T}}
\newcommand{\el}{\mathrm{el}}
\newcommand{\colim@}[2]{%
  \vtop{\m@th\ialign{##\cr
    \hfil$#1\operator@font colim$\hfil\cr
    \noalign{\nointerlineskip\kern-\ex@}\cr}}%
}
\newcommand{\colim}{%
  \mathop{\mathpalette\colim@{\scriptscriptstyle}}\nmlimits@
}
\newcommand{\Omegacrsub}{\Omega^{\mathrm{c}, \rsub}}
\newcommand{\FM}{\mathrm{FM}}
\begin{document}

\begin{abstract}
We study unital $\infty$-operads by their arity restrictions.
Given $k \geq 1$, 
we develop a model for unital $k$-restricted $\infty$-operads, which are variants of $\infty$-operads which have only $(\leq k)$-arity morphisms, as complete Segal presheaves on closed $k$-dendroidal trees, which are closed trees built from corollas with valences $\leq k$. Furthermore, we prove that the restriction functors from unital $\infty$-operads to unital $k$-restricted $\infty$-operads admit fully faithful left and right adjoints by showing that the left and right Kan extensions preserve complete Segal objects. Varying $k$, the left and right adjoints give a filtration and a co-filtration for any unital $\infty$-operads by $k$-restricted $\infty$-operads. 
\end{abstract}
\maketitle

\tableofcontents

\section{Introduction}
In the pioneering work \cite{stasheff1963homotopy}, Stasheff constructs 
a sequence of convex polytopes $K_n$, called the \emph{Stasheff associahedra}, that encode the associativity of a binary multiplication.
In \cite[\S 4.1]{HA}, Lurie provides an interpretation using the framework of $\infty$-operads. Let $\AAnu_\infty$ be the non-unital associative $\infty$-operad that parametrizes a fully coherent associative multiplication. Lurie constructs a converging filtration of $\infty$-operads
\begin{equation}\label{eq:non-unital-AAn-adjoint}
  \Triv = \AAnu_1 \to \AAnu_2 \to \cdots \to \AAnu_k \to \cdots \to \AAnu_\infty,
\end{equation}
where the $\AAnu_k$ 
parametrizes multiplications that are associative up to $k$ inputs. 
Furthermore, by \cite[Theorem 4.1.6.13]{HA}, extending an $\AAnu_{k-1}$-algebra structure to an $\AAnu_{k}$-algebra structure is equivalent to lift certain maps from $\partial K_k$ to $K_k$.

There is a sense that $\AAnu_k$ is the ``$k$-th arity restriction'' of $\AAnu_\infty$: 
\begin{enumerate}
    \item The map $\AAnu_k \to \AAnu_\infty$ induces an equivalence on $n$-ary morphism spaces for $n \leq k$.
    \item For $n > k$, the $n$-ary morphism space $\AAnu_k(n)$ is generated by $(\leq k)$-morphisms in a suitable sense.
\end{enumerate}
This motivates the natural notion of a $k$-restricted $\infty$-operad, where we only consider $n$-ary morphism spaces for $n \leq k$.
These $k$-restricted $\infty$-operads can be viewed as the $k$-arity-skeletons of $\infty$-operads, and the associated obstruction theory has been studied in \cite{barkan2023}.

In \cite[Appendix C.1]{MR4320769}, Heuts considers arity restrctions of non-unital $\infty$-operads.\footnote{What we call $k$-restricted is called $k$-truncated in \cite{MR4320769}.}\footnote{An $\infty$-operad $\cO$ is \emph{non-unital} if for every color $X$ the space $\Mul_{\cO}(\varnothing, X)$ is empty.}
Heuts constructs an $\infty$-category $\Opnu_{\leq k}$ of non-unital $k$-restricted $\infty$-operads, and proves that the natural restriction functor $\Opnu \to \Opnu_{\leq k}$ has fully faithful left and right adjoints.

In this note, we study the unital version of $k$-restricted $\infty$-operads.
Recall that an $\infty$-operad $\cO$ is \emph{unital} if for every color $X$ the space $\Mul_{\cO}(\varnothing, X)$ is contractible. 
We model $k$-restricted $\infty$-operads using Moerdjik and Weiss' category $\Omega$ of dendroidal trees \cite{MR3100887}. Dendroidal trees are certain trees that parametrize composable operadic operations, just as the $n$-th simplex $[n]$ parametrizes $n$-composable morphisms.

By the results of \cite{Barwick_2018} 
and \cite{two-models}, the $\infty$-category $\Op$ of $\infty$-operads is equivalent to the $\infty$-category $\Segcpl(\Omega)$ of complete Segal presheaves on the dendroidal category $\Omega$. 
Using this, we prove in \cref{cor:opun-eq-Omegac} that the $\infty$-category $\Opun$ of unital $\infty$-operads is equivalent to the $\infty$-category $\Segcpl(\Omegac)$ of complete Segal presheaves on the category $\Omegac$ of \emph{closed} dendroidal trees (\cref{def:closed-tree}).\footnote{This result was first proven in \cite{MR4222648} using model categories on dendroidal sets. We prove this directly.}

Inspired by this, we say that a dendroidal tree is $k$-dendroidal if it is built from $n$-corollas with $n \leq k$, and define the $\infty$-category $\Opunleq{k}$ of unital $k$-restricted $\infty$-categories as complete Segal presheaves on the category $\Omegacleqk$ of closed $k$-dendroidal trees (see \cref{def:k-restristed-unital-operad}). 
When $k = \infty$ we take $\Opunleq{k}$ to be $\Opun$.

Our main result is the following:
\begin{theorem}[\cref{thm:left-adjoint-theorem}, \cref{thm:right-adjoint-theorem}]
    Given $1 \leq k \leq j \leq \infty$, the natural restriction functor 
    $(-)^k \colon \Opunleq{j} \to \Opunleq{k}$ admits a fully faithful left adjoint $\Lk$ as well as a fully faithful right adjoint $\Rk$, given by left and right Kan extension along 
    $i_k \colon \Omegacleqk \hookrightarrow \Omegacleq{j}$ respectively.
\end{theorem}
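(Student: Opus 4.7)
The approach is to build the adjoints from the Kan extension adjunctions $\LKan_{i_k} \dashv i_k^* \dashv \RKan_{i_k}$ at the presheaf level and then show that both Kan extensions preserve complete Segal presheaves. The restriction functor $(-)^k$ is $i_k^*$ restricted to the complete Segal subcategory, so once we verify that $\LKan_{i_k}$ and $\RKan_{i_k}$ land in complete Segal presheaves, they restrict to the sought adjoints $\Lk$ and $\Rk$. Fully faithfulness is then automatic: since $i_k$ is fully faithful, so are $\LKan_{i_k}$ and $\RKan_{i_k}$ on presheaf $\infty$-categories (a standard consequence of the pointwise formula for Kan extension along a fully faithful functor), and this property is inherited by their restrictions.

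The central claim to establish is therefore Segal preservation: if $F \in \cP(\Omegacleqk)$ satisfies $F(T) \simeq \lim_{\overline{C_v} \hookrightarrow T} F(\overline{C_v})$ over the elementary closed-corolla subtrees for every $T \in \Omegacleqk$, then $\LKan_{i_k} F$ and $\RKan_{i_k} F$ satisfy the analogous condition for every $T \in \Omegacleq{j}$. For $T \in \Omegacleqk$ this is automatic from fully faithfulness of $i_k$. The real work is for trees $T \in \Omegacleq{j}$ containing at least one corolla of valence $> k$. The plan is to compute $(\LKan_{i_k} F)(T)$ via the pointwise formula as a colimit over the slice $(\Omegacleqk)_{/T}$, identify a cofinal subdiagram built from the $(\leq k)$-valent elementary subtrees of $T$ together with their compositions, and then collapse the colimit to the elementary limit using the Segal condition on $F$. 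The case of $\RKan_{i_k}$ is formally dual, with the coslice $(\Omegacleqk)_{T/}$ replacing the slice and a limit--colimit interchange in place of the cofinality reduction.

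Completeness will be significantly easier: for a Segal presheaf on $\Omegac$, completeness is a condition on its restriction to a small subcategory of ``low-arity'' trees (edges, unary corollas and the walking isomorphism built from them), all of which already lie in $\Omegacleqk$ for every $k \geq 1$. Since Kan extension along a fully faithful functor preserves values on the source subcategory, $\LKan_{i_k} F$ and $\RKan_{i_k} F$ agree with $F$ on these reference trees, so completeness transfers. The main obstacle will be the cofinality step in the Segal preservation: for trees with several $(>k)$-valent corollas, $(\Omegacleqk)_{/T}$ is not a poset and its combinatorics require care, most naturally handled by induction on the number of such large corollas, reducing via gluing along inner edges to the case $T = \overline{C_n}$ with $n > k$, and then handling that case directly by exploiting the nullary leaves of the closed corolla to decompose the Kan extension colimit.
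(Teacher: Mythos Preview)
Your high-level strategy matches the paper's: establish the Kan-extension adjunctions on presheaves, show both Kan extensions preserve the Segal condition, and note that completeness is detected on linear trees already in $\Omegacleqk$. Two issues, however, prevent the proposal from going through as written.

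First, the slice and coslice are swapped. For presheaves (contravariant functors), the pointwise formula gives
\[
\LKan_{i_k} F(T)\ \simeq\ \colim_{X \in ((\Omegacleqk)_{T/})^{\op}} F(X),
\qquad
\RKan_{i_k} F(T)\ \simeq\ \lim_{X \in ((\Omegacleqk)_{/T})^{\op}} F(X),
\]
so the \emph{left} Kan extension is a colimit over maps \emph{out of} $T$ and the \emph{right} Kan extension is a limit over maps \emph{into} $T$. Your description of ``a cofinal subdiagram built from the $(\leq k)$-valent elementary subtrees of $T$'' belongs, if anywhere, to the $\RKan$ side.

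Second, and more substantively, the two cases are \emph{not} formally dual, and treating $\RKan$ as the dual of $\LKan$ is where the proposal has a real gap. The paper uses two different factorization systems on $\Omegac$. For $\LKan_k$, the (rooted max-surjective, subtree inclusion) system makes the inclusion $(\Omegacleqk)_{T/^{\rmax}} \hookrightarrow (\Omegacleqk)_{T/}$ coinitial; the crucial combinatorial fact is that $(\Omegacleqk)_{T/^{\rmax}}$ \emph{splits as a product} along any inner edge of $T$, after which universality of colimits in $\Spaces$ lets you commute the colimit past the Segal fiber product. For $\RKan_k$, the (max-surjective, rooted subtree inclusion) system reduces to the poset of rooted $k$-dendroidal subtrees of $T$, but this poset does \emph{not} decompose as a product along an inner edge. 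Instead one must organize it as a coCartesian fibration over $\cP_{\leq k}(v(r))$, compute the limit fiberwise, and then invoke a separate lemma that a strongly Cartesian cube is right Kan extended from its $(\leq k)$-skeleton. Your phrase ``a limit--colimit interchange in place of the cofinality reduction'' does not capture this structure, and without it the Segal verification for $\RKan_k$ does not close.
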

Intuitively, for a unital $k$-restricted $\infty$-operad $\cO$, the $n$-ary morphism space of $\Lk\cO$ is the space of all possible $n$-ary morphism that can be created from $(\leqk)$-ary morphisms in $\cO$ (\cref{cor:left-adj-description-on-F}); while the $n$-ary morphism space of $\Rk\cO$ is the space of families of $(\leqk)$-ary morphisms that are compatible under taking units (\cref{cor:right-adj-description-on-F}).

To prove the main theorem, we show in \cref{prop:LK-complete-Segal} and \cref{prop:Rk-complete-Segal} that the left and right Kan extensions both preserve complete Segal presheaves.
Furthermore, we recognize the images of $\Rk$ and $\Lk$ in \cref{cor:left-adj-image-of-Lk} and \cref{cor:right-adj-image-of-Rk}.

Given a unital $\infty$-operad $\cO$, we show in \cref{cor:left-fil-colimits} that there exists a converging filtration:
\begin{equation}
    \mathrm{L}_1\cO \to \mathrm{L}_2\cO \to \cdots \to \Lk\cO \to \cdots \to \cO
\end{equation}
where $\Lk\cO \coloneqq \Lk(\cO)^k$. For the unital associative operad $\AA_\infty = \EE_1$, the $\Lk$ filtration is the unital version of \eqref{eq:non-unital-AAn-adjoint} (see \cref{ex:Ak-operads}).
More generally, in \cref{ex:Ek-filtration} we use the results of \cite{goppl} to identify the $\Lk$ filtration of the little cube operads $\EE_n$ with the stratified filtration of the Fulton-MacPherson operads.

Given a unital $\infty$-operad $\cO$, we show in \cref{cor:right-co-fil} that there exists a converging co-filtration:
\begin{equation}
    \cO \to  \cdots \to \Rk \cO \to \cdots \to \mathrm{R}_2 \cO \to \mathrm{R}_1 \cO
\end{equation}
where $\Rk\cO \coloneqq \Rk(\cO)^k$.
The $\Rk$ co-filtration for the $\AA_\infty$ operad is explicitly calculated in \cref{ex:E1-right-adjoint}.

\textbf{Outlook:}
We believe much of our results on unital $k$-restricted $\infty$-operads and the $\Lk$ left adjoint can be extended to general $k$-restricted $\infty$-operads by working with non-necessarily closed $k$-dendroidal trees. In \cite{barkan2023},  Barkan studied 
the left adjoint $\Lk$ for general $\infty$-operads using a notion of $k$-restriction that is closer to Lurie's model. It would be interesting to unify the two different models.

On the other hand, we believe that the $\Rk$ right adjoint does not exist for general $k$-restricted $\infty$-operads. One sign of this is that the underlying $\infty$-category functor $\Op \to \cat$ does not have a right adjoint. The problem is that the right adjoint needs unital data, which varies for general $\infty$-operads. However, the right adjoint should exist when we specify the unital data.
In fact, this is why we restrict ourselves to unital $\infty$-operads and why there is also a right adjoint in the non-unital setting, as shown in \cite{MR4320769}.

In \cite{goppl}, Göppl studied the problem of lifting morphisms 
between reduced  $\infty$-operads from their $k$-restrictions, by matching-and-latching along maps $\Lk \cO \to \cO \to \Rk\cO$.\footnote{A unital $\infty$-operad is reduced if its underlying $\infty$-category is equivalent to $\pt$.} It would be interesting to prove an $\infty$-categorical statement about successively lifting from $\Opunleq{k}$ to $\Opunleq{k+1}$ by some matching-and-latching obstruction theory.

Upon finishing this paper, we received communications from the authors of \cite{KK}, in which they independently proved \cref{thm:right-adjoint-theorem}.

\textbf{Outline:}
In \cref{sec:k-restricted-closed-den}, we review dendroidal trees and the subclasses of closed and $k$-dendroidal trees.
In \cref{sec:unital-k-restricted-op}, we prove that unital $\infty$-operads are equivalent to complete Segal presheaves on closed dendroidal trees and define unital $k$-restricted $\infty$-operads.
We prove the left adjoint and the right adjoint parts of our main theorem in \cref{sec:left-adjoint} and \cref{sec:right-adjoint} respectively.

\textbf{Acknowledgement:}
The authors are grateful to Shaul Barkan for helpful conversations and Sophus Valentin Willumsgaard for comments on a previous draft. ASD would like to thank Shachar Carmeli for his guidance and support, most notably during his time in Copenhagen. 
YLL would also like to thank Mike Hopkins for his guidance and encouragement.
YLL gratefully acknowledges the financial support provided by the Simons Collaboration on Global Categorical Symmetries.

\section{Closed $k$-dendroidal trees}\label{sec:k-restricted-closed-den}

\subsection{Dendroidal trees}\label{subsec:dendroidal}
A finite rooted tree $T$ is a finite poset $(T, \leq)$ such that 
\begin{enumerate}
    \item there exists a minimal element,
    \item for any $x \in T$, the sub-poset $T_{\leq x}$ consisting of elements less than $x$ is linearly ordered.
\end{enumerate}
Following \cite{MR3100887}, a dendroidal tree $(T, L)$  is a finite rooted tree $T$ together with a subset $L$ of its maximal elements. We will often refer to a dendroidal tree $(T,L)$ simply as a tree $T$.

We refer to elements of $T$ as \emph{edges}, the 
minimal edge as the \emph{root}, and elements in $L$ as \emph{leaves}. 
An edge is \emph{external} if it is the root or a leaf; else, we call it \emph{internal}.
For an edge $e \in T$, the \emph{valence} of $e$ is the number of minimal elements in $T_{> e}$. 

\begin{example}\label{ex:edge}
    The \emph{edge} $\eta$ is the dendroidal tree with only one edge. Explicitly, $\eta = \{r\}$ with its unique poset structure and $L = \{r\}$.
\end{example}

\begin{example}\label{ex:k-corolla}
   Fix $n \geq 1$. The \emph{$n$-corolla} $C_n$ is the dendroidal tree with $k$ leaves and no internal edges. The poset underlying $C_k$ is $\{0, \cdots, k\}$ 
   such that $0$ is minimal $L = \{1, \cdots, k\}$. For $n=0$, we take $C_0$ to be $(\{0\}, \varnothing)$.
\end{example}
Every dendroidal tree is obtained from gluing $n$-corollas along their roots and leaves.

\begin{example}\label{ex:linear-posets}
Fix $n \geq 0$. Let $[n]$ be the linear poset $\{0 \leq 1 \leq \cdots \leq n\}$. The pair $([n], \{n\})$ is a dendroidal tree. Note that $[0]$ is the edge $\eta$, and $[1]$ is the $1$-corolla $C_1$.
\end{example}
Let $\Op_\infty$ denote the $\infty$-category of $\infty$-operads, and $\Opone$ be the full subcategory of ordinary operads.
Following \cite{MR3100887}, for each dendroidal tree $T$, there is an ordinary operad $\Free_{\Op}(T) \in \Opone$ whose colors are edges of $T$ and whose operations are generated by non-leaf edges of $T$.
\begin{definition}\label{def:Omega}
Let  $\Omega$ be the category of dendroidal trees whose objects are trees and morphisms are given by 
\begin{equation}
  \Hom_{\Omega}(T, T') \coloneqq   \Hom_{\Opone}(\Free_{\Op}(T), \Free_{\Op}(T')).
\end{equation}
\end{definition}

\begin{remark}
    Let $T$ be a dendroidal tree. Then a morphism from $\eta$ to $T$ corresponds to an edge of $T$.
\end{remark}

\begin{observation}\label{obs:simplex-embeds-in-dendroidal}
Let $\Delta$ be the simplex category.
    There exists a fully faithful inclusion 
    $r \colon \Delta \to \Omega$ taking the object $[n]$ to the corresponding dendroidal tree $([n], \{n\})$ defined in \cref{ex:linear-posets}.
\end{observation}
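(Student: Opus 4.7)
The plan is to unpack both sides of the claimed full faithfulness by identifying $\Free_{\Op}([n])$ explicitly as a very simple (indeed linear) operad, and then reading off the hom-set.

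First I would compute $\Free_{\Op}([n])$ for $[n]$ equipped with the leaf set $L=\{n\}$. The non-leaf edges are $0,1,\dots,n-1$, and for each such $i$ the sub-poset of strict successors has unique minimal element $i+1$, so the valence of $i$ is $1$. Thus each generating corolla is unary, and $\Free_{\Op}([n])$ has colors $\{0,1,\dots,n\}$ together with one generating unary operation $i+1 \to i$ for $0 \leq i < n$. In particular $\Free_{\Op}([n])$ has no operations of arity $\neq 1$, so it is (the image under the fully faithful inclusion $\Cat \hookrightarrow \Opone$ of) the linearly ordered category $[n]$.

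Next I would read off $\Hom_{\Opone}(\Free_{\Op}([n]), \Free_{\Op}([m]))$. Because $\Free_{\Op}([m])$ is also concentrated in arity $1$, any operad map between the two is the same data as a functor between the underlying categories $[n] \to [m]$, namely a weakly monotone function $\{0,\dots,n\} \to \{0,\dots,m\}$. This is exactly $\Hom_\Delta([n],[m])$. I would then define the functor $r$ by sending a morphism $f \in \Hom_\Delta([n],[m])$ to the operad map whose action on colors is $f$ and which sends each generator $i+1 \to i$ to the unique unary operation $f(i+1) \to f(i)$ in $\Free_{\Op}([m])$ (well-defined since $f(i) \leq f(i+1)$); functoriality is immediate from the description. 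By construction the induced map on hom-sets is the identity bijection between monotone maps, so $r$ is fully faithful.

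I do not expect a serious obstacle here: the only subtle point is confirming that the Moerdijk--Weiss free operad on a linear tree really is just the poset $[n]$, i.e.\ that no extra non-unary operations sneak in. This is a direct consequence of the generation statement in \cref{def:Omega} together with the valence computation above, and it is the step I would write out carefully.
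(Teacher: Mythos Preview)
Your argument is correct. The paper records this statement as an \emph{Observation} and offers no proof, treating it as a well-known fact from the Moerdijk--Weiss dendroidal literature; your write-up supplies exactly the standard justification. The only cosmetic point is that with the convention ``generating operation $i+1 \to i$'' the underlying category of $\Free_{\Op}([n])$ is literally $[n]^{\op}$ rather than $[n]$, but since $\Fun([n]^{\op},[m]^{\op}) \cong \Fun([n],[m])$ this does not affect the identification of hom-sets, and your description of $r$ on morphisms is the right one.
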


\subsection{Closed dendroidal trees}\label{subsec:closed-dendroidal}
In this subsection, we review the subcategory $\Omegac$ of closed dendroidal trees \cite{MR4222648} and define various factorization systems on $\Omegac$. 
\begin{definition}\label{def:closed-tree}
A tree $T$ is \emph{closed} if the set of leaves $L$ is empty. 
\end{definition}
We denote by $\Omegac$ the full subcategory $\Omega$ consisting of closed trees. 
\begin{observation}\label{obs:closed-trees-description}
    The category $\Omegac$ has an explicit description: its objects are finite rooted trees; furthermore, a morphism from $T$ to $T'$ is an order-preserving map that preserves independence.
    Note that two elements $x, y \in T$ are \emph{independent} if neither $x \leq y$ nor $y \leq x$ holds, and an order-preserving map $f \colon T \to T'$ \emph{preserves independence} if for every pair of independent elements $x, y \in T$, the pair $f(x)$ and $f(y)$ are also independent in $T'$.
\end{observation}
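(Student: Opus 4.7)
The plan is to unwind morphisms in $\Omegac$ as operad homomorphisms between free operads on closed trees, and translate them into combinatorial data on the underlying posets. Note that the objects of $\Omegac$ are indeed just finite rooted trees, since ``closed'' means the distinguished leaf-set is empty and so carries no extra data.

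The key preliminary step is a normal-form description of operations in $\Free_{\Op}(T)$ when $T$ is closed: for edges $y, y_1, \ldots, y_n$ of $T$, the multi-morphism space $\Mul_{\Free_{\Op}(T)}((y_1, \ldots, y_n); y)$ is non-empty, with a unique element once the order of inputs is fixed, precisely when $y \leq y_i$ for all $i$ and the $y_i$ are pairwise independent in $T$. The ``only if'' direction holds in any tree-generated free operad, since compositions of generators always produce operations whose inputs form a pairwise-independent family above the output. The ``if'' direction is where closedness is crucial: every childless edge carries a $0$-ary generator, and these let us fill in any absent branch of a partial subtree of $T_{\geq y}$ by induction on $|T_{\geq y}|$.

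Granted this, an operad morphism $f \colon \Free_{\Op}(T) \to \Free_{\Op}(T')$ is determined by its color map $\overline{f}\colon T \to T'$, subject to the requirement that for each edge $e$ of $T$ with children $c_1, \ldots, c_n$ there exists a (necessarily unique) operation in $\Mul_{\Free_{\Op}(T')}((\overline{f}(c_1), \ldots, \overline{f}(c_n)); \overline{f}(e))$. By the normal form, this requirement is equivalent to (i) $\overline{f}(e) \leq \overline{f}(c_i)$ for all $i$, and (ii) the $\overline{f}(c_i)$ are pairwise independent in $T'$. Iterating (i) along chains in $T$ gives order-preservation of $\overline{f}$. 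For independence-preservation, given independent $x, y \in T$ with meet $z = x \wedge y$, take distinct children $c \leq x$ and $c' \leq y$ of $z$; condition (ii) at the generator $\mu_z$ forces $\overline{f}(c), \overline{f}(c')$ to be independent in $T'$. Any relation $\overline{f}(x) \leq \overline{f}(y)$ would give $\overline{f}(c), \overline{f}(c') \leq \overline{f}(y)$, placing them in the linearly ordered chain $T'_{\leq \overline{f}(y)}$ and hence making them comparable, contradicting their independence; the case $\overline{f}(y) \leq \overline{f}(x)$ is symmetric.

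Conversely, an order-and-independence-preserving map $\overline{f}\colon T \to T'$ satisfies (i) and (ii) automatically, so sending each generator $\mu_e$ to the unique operation supplied by the normal form defines an operad morphism by freeness; the two constructions are manifestly mutually inverse. The main obstacle is the normal-form lemma, especially its existence clause, which requires the $0$-ary generator at every childless edge to witness operations whose input family omits some branches of $T_{\geq y}$. This is precisely where closedness of $T$ is essential, and the analogous statement fails for non-closed trees where designated leaves carry no $0$-ary generator.
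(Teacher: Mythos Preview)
The paper records this statement as an observation and offers no proof, so there is nothing to compare against; your argument correctly fills the gap. The normal-form description of $\Mul_{\Free_{\Op}(T')}((y_1,\ldots,y_n);y)$ for closed $T'$ is exactly the right ingredient, and you have located precisely where closedness enters: the $0$-ary generators at maximal edges are what allow one to cap off any unused branches of $T'_{\geq y}$ and thereby realize an operation with an arbitrary pairwise-independent input family above $y$. From there the translation between operad maps and order-and-independence-preserving poset maps goes through as you describe; the uniqueness clause of the normal form guarantees that the operad morphism is determined by its colour map, and the universal property of the free operad handles the converse direction. One small point worth making explicit is that the meet $z=x\wedge y$ of two independent edges always exists in a finite rooted tree (the intersection $T_{\leq x}\cap T_{\leq y}$ is nonempty, finite, and linearly ordered), which your argument uses implicitly.
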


The fully faithful inclusion $j \colon \Omegac \hookrightarrow \Omega$ has a left adjoint $(-)^c$, which takes a tree $(T, L)$ to its closure $(T, \varnothing)$. In particular, we denote the closure of the edge as $\overline{\eta}$ and the closure of the $n$-corolla as $\CC{n}$. Note that $\overline{\eta} = C_0 = \CC{0}$. Let $\mu$ denote the unique map $\eta \to \CC{0}$. 
We have a nice characterization for $(-)^c$:
\begin{proposition}\label{prop:tree-closure-uni-prop}
   The left adjoint $(-)^c$ exhibits $\Omegac$ as the localization $\Omega[\{\mu\}^{-1}]$.\footnote{See \cite[\S5.2.7]{HTT} for the theory of localization.} That is, $\Omegac$ is the full subcategory of $\mu$-local objects, and for any $T \in \Omegac$ the unit map $T \to T^c$ is a localization relative to $\mu$.
\end{proposition}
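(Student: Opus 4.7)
The plan is to apply the reflective-localization criterion of \cite[\S5.2.7]{HTT}: since the adjunction $(-)^c \dashv j$ with $j$ fully faithful is already established, it suffices to identify the full subcategory of $\mu$-local objects of $\Omega$ with $\Omegac$, after which the statement that the unit is a localization will follow automatically from the fact that $(-)^c$ is the reflector.

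For this identification I would explicitly compute $\Hom_\Omega(\CC{0}, T)$ for an arbitrary tree $T \in \Omega$ with leaf set $L$. Since $\CC{0} = C_0$ is the $0$-corolla, the operad $\Free_\Op(C_0)$ has a single color and a single nullary operation, so by \cref{def:Omega} a morphism $C_0 \to T$ is the data of an edge $e \in T$ together with a nullary operation $\varnothing \to e$ in $\Free_\Op(T)$. Using the combinatorial description of $\Free_\Op(T)$ from \cite{MR3100887}, whose operations are indexed by sub-trees of $T$ rooted at the output with marked input edges, a nullary operation with output $e$ amounts to a subset $S \subseteq T_{\geq e}$ containing $e$ that is closed under passing to children in $T$ and contains no element of $L$; this forces $S = T_{\geq e}$ and $T_{\geq e} \cap L = \varnothing$. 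Consequently
\[
\Hom_\Omega(\CC{0}, T) \;\cong\; \{\, e \in T : T_{\geq e} \cap L = \varnothing \,\},
\]
and the map $\mu^*\colon \Hom_\Omega(\CC{0}, T) \to \Hom_\Omega(\eta, T)$ is the evident inclusion into the full set of edges of $T$.

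From this description, $T$ is $\mu$-local precisely when $T_{\geq e} \cap L = \varnothing$ for every edge $e \in T$. Specializing to $e = l$ for a putative leaf $l \in L$ gives $l \in T_{\geq l} \cap L$, forcing $L = \varnothing$; conversely, when $L = \varnothing$ the condition is vacuous. Hence the $\mu$-local objects of $\Omega$ are exactly the closed trees, and the criterion of \cite[\S5.2.7]{HTT} concludes that $(-)^c$ exhibits $\Omegac$ as the reflective localization $\Omega[\{\mu\}^{-1}]$, with each unit map $T \to T^c$ realizing this localization.

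The principal subtlety I foresee is carefully justifying the claimed enumeration of nullary operations in $\Free_\Op(T)$, since $\Free_\Op(T)$ is defined via a substitution-and-composition procedure on vertex corollas; once that enumeration is granted, the remainder of the argument is a direct unpacking of the definitions.
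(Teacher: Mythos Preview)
Your proposal is correct and follows essentially the same route as the paper: both identify the $\mu$-local objects as the closed trees by analyzing when an edge $e\colon \eta \to T$ extends along $\mu$ to a map $\CC{0} \to T$, and both conclude from this that $(-)^c$ realizes the localization. Your treatment is more explicit than the paper's in two respects: you actually compute $\Hom_\Omega(\CC{0},T)$ and check that $\mu^*$ is injective (the paper asserts only the extension criterion, leaving uniqueness implicit), and you correctly note that the second half of the paper's proof---that the unit $T\to T^c$ induces an equivalence on maps into closed trees---is already contained in the adjunction $(-)^c \dashv j$ stated just before the proposition, so no further argument is needed. The one place to be careful is exactly the point you flag: the enumeration of nullary operations in $\Free_\Op(T)$ with output $e$ hinges on the combinatorial description of operations as subtrees, and you should make explicit that such a nullary operation, when it exists, is \emph{unique} (namely the full subtree $T_{\geq e}$), since this is what guarantees $\mu^*$ is injective and hence that $\mu$-locality is equivalent to surjectivity of $\mu^*$.
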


\begin{proof}
    let $T$ be a tree, then an edge $e \colon \eta \to T$ can extends to a map $\overline{e} \colon \CC{0} \to T$ if and only if there are no leaves above $e$. Therefore, $\mu$-local objects are precisely the closed trees. Let $T'$ be a closed tree, then the unit map $T \to T^c$ induces an equivalence 
    \begin{equation}
        \Hom_{\Omega}(T^c, T') \to \Hom_{\Omega}(T, T').
    \end{equation}
    Thus, $T \to T'$ is a localization relative to $\mu$.
\end{proof}

Next, we study factorization systems on $\Omegac$.\footnote{We refer the reader to \cite[\S 5.2.8]{HTT} for an introduction to factorization systems.} 
First, we have the various classes of morphisms in $\Omegac$:

\begin{notation}\label{nota:morphisms-in-omegac}
    Let $f \colon T \to T'$ be a morphism in $\Omegac$.
    \begin{enumerate}
        \item $f$ is \emph{rooted} if it takes the root of $T$ to the root of $T'$.
        \item $f$ is called a \emph{subtree inclusion} if $f$ is injective and the image of $f$ is a subtree in $T'$, i.e., for any $e_0, e_2 \in T$ and $e' \in T$ such that $f(e_0) \leq e' \leq f(e_2)$, then there is an $e_1 \in T$ such that $f(e_1) = e'$.
        \item $f$ is called \emph{max-surjective} if for every edge $e' \in T'$ there exists an edge $e \in T$ with $f(e) \geq e'$. Equivalently,  every maximal edge of $T'$ is in the image of $f$. 
   \end{enumerate}
\end{notation}
We will often identify a subtree inclusion $f \colon X \to T$ with its image $f(X) \subset T$.
\begin{remark}
    Rooted max-surjective maps are often called \emph{active}, while subtree inclusions are often called \emph{inert}. We chose our terminology because we will also consider max-surjective maps and rooted subtree inclusions.
\end{remark}
\begin{observation}\label{obs:max-surj}
    Let $f \colon T \to T'$ be a max-surjective map in $\Omegac$. Then $f$ takes maximal elements in $T$ to maximal elements in $T'$.
    Furthermore, since $f$ preserves independence, it restricts to an isomorphism on maximal elements.
\end{observation}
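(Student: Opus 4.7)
The statement has two parts: that $f$ sends maximal elements of $T$ to maximal elements of $T'$, and that the induced map on maximal elements is a bijection. The plan is to argue both directly from the three defining properties of morphisms in $\Omegac$ given by \cref{obs:closed-trees-description} --- order-preservation and independence-preservation --- together with max-surjectivity, relying on the fact that in a rooted tree any two elements are either comparable or independent. I do not anticipate any genuine obstacle; the argument is a short formal check in which the only non-tautological input is this tree dichotomy.

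For the first claim, I would argue by contradiction. Given a maximal $m \in T$, suppose $f(m) < e'$ for some $e' \in T'$. Max-surjectivity yields $e \in T$ with $f(e) \geq e'$, so $f(e) > f(m)$. Now compare $e$ and $m$ in $T$: if they are independent, independence-preservation forces $f(e)$ and $f(m)$ to be independent in $T'$, contradicting $f(e) > f(m)$; otherwise they are comparable, and maximality of $m$ forces $e \leq m$, whence $f(e) \leq f(m)$, again a contradiction. Hence $f(m)$ must be maximal in $T'$.

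For the bijectivity, injectivity on maximal elements is immediate: any two distinct maximal elements of a rooted tree are independent (they cannot be comparable without violating the maximality of one of them), so their images under $f$ are independent and in particular distinct. For surjectivity, given a maximal $m' \in T'$, max-surjectivity provides $e \in T$ with $f(e) \geq m'$, and maximality of $m'$ forces $f(e) = m'$. Choosing any maximal $m \geq e$ in $T$ (possible since $T$ is finite), the first claim ensures $f(m)$ is maximal in $T'$, and the inequality $f(m) \geq f(e) = m'$ then forces $f(m) = m'$, completing the argument.
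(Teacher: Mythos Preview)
Your argument is correct. The paper records this as an observation and offers no proof beyond the parenthetical hint ``since $f$ preserves independence''; your write-up is precisely the natural unpacking of that hint, using the comparability/independence dichotomy in a rooted tree together with max-surjectivity.
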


\begin{observation}\label{obs:subtree-data}
    Let $T$ be a closed tree. A rooted subtree inclusion of $T$ corresponds to a subset of pairwise independent elements. A subtree inclusion of $T$ corresponds to a subset $S$ of pairwise independent elements together with an element that is less than all elements in $S$. 
\end{observation}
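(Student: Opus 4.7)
The plan is to give explicit, mutually inverse constructions. Given a subtree inclusion $f \colon X \hookrightarrow T$ with $X$ a closed tree, let $r \coloneqq f(r_X)$ be the image of the root of $X$ and let $S \subseteq T$ be the $f$-image of the set of maximal elements of $X$. Since $f$ is order-preserving, $r \leq f(t)$ for every $t \in X$, so in particular $r \leq s$ for every $s \in S$. Distinct maximal elements of the closed tree $X$ are pairwise independent in $X$, and $f$ preserves independence by \cref{obs:closed-trees-description}, so $S$ is pairwise independent in $T$. In the rooted case $r = r_T$ is already determined, so only the data of $S$ remains.

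For the inverse direction, given $r \in T$ and a pairwise independent subset $S \subseteq T$ with $r \leq s$ for every $s \in S$, I would define
\begin{equation}
    X_{r, S} \coloneqq \{r\} \cup \{t \in T \mid r \leq t \leq s \text{ for some } s \in S\}
\end{equation}
equipped with the induced poset structure. The main verifications are: (i) $X_{r,S}$ is a closed tree with root $r$, since for any $t \in X_{r,S}$ the poset $(X_{r,S})_{\leq t}$ embeds in the linearly ordered poset $T_{\leq t}$; (ii) the inclusion $X_{r,S} \hookrightarrow T$ is order- and independence-preserving, which is immediate because the order is inherited; and (iii) the image is interval-closed, since if $t_0 \leq t' \leq t_1$ with $t_0, t_1 \in X_{r, S}$, then $r \leq t_0 \leq t'$ and $t' \leq t_1 \leq s$ for some $s \in S$, so $t' \in X_{r, S}$.

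To conclude, I would check that the two constructions are mutually inverse. Starting from $(r, S)$, the tree $X_{r,S}$ has root $r$ by construction, and its maximal elements are exactly $S$, since the elements of $S$ are pairwise independent and any element strictly between $r$ and some $s \in S$ fails to be maximal. Starting from $f \colon X \hookrightarrow T$, interval-closure of $f(X)$ combined with the extracted root-image $r$ and top-image $S$ forces $f(X) = X_{r, S}$, and $f$ is then recovered as the canonical isomorphism onto its image. The rooted statement follows by further imposing $r = r_T$. I do not anticipate any serious obstacle; the one point worth flagging is that one must use interval-closure to argue that the image of a subtree inclusion is fully reconstructible from just its root and its set of tops.
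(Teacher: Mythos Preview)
The paper states this as an \emph{Observation} and offers no proof at all; it is treated as self-evident. Your argument is correct and supplies exactly the details one would want: the forward map extracting $(r,S)$ from a subtree inclusion, the explicit inverse $X_{r,S}$, and the verification that interval-closure forces $f(X)=X_{r,S}$. The only cosmetic point is that your forward map never produces $S=\emptyset$ (a nonempty finite poset always has maximal elements), so strictly speaking the bijection is with \emph{nonempty} pairwise independent subsets, or equivalently one should identify $(r,\emptyset)$ with $(r,\{r\})$; this is a harmless convention and not a gap.
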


\begin{proposition}\label{prop:fact-sys-on-Omegac}
    The following holds:
    \begin{enumerate}
        \item\label{item:fact-1} The classes of (max-surjective, rooted subtree inclusion) morphisms define a factorization system on $\Omegac$. 
        \item\label{item:fact-2} The classes of (rooted max-surjective, subtree inclusion) morphisms also define a factorization system on $\Omegac$.
    \end{enumerate}
\end{proposition}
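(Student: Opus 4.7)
The plan is to construct the factorizations explicitly and then derive orthogonality from the fact that a morphism lying in both classes of either pair must be an isomorphism.

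Given $f \colon T \to T'$ in $\Omegac$, since $f$ preserves independence and the maximal elements of $T$ are pairwise independent, $f(\mathrm{max}(T))$ is a pairwise independent subset of $T'$. For \cref{item:fact-1}, let $S_1 \subseteq T'$ be the rooted subtree of $T'$ with maximal elements $f(\mathrm{max}(T))$, i.e.\ the downward closure of $f(\mathrm{max}(T))$ in $T'$; this is a well-defined rooted subtree by \cref{obs:subtree-data}. Every element of $T$ sits below a maximal element, so $f$ factors through $S_1$; the induced map $T \to S_1$ is max-surjective (its image contains $\mathrm{max}(S_1) = f(\mathrm{max}(T))$), and $S_1 \hookrightarrow T'$ is a rooted subtree inclusion. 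For \cref{item:fact-2}, replace $S_1$ by $S_2 = \{t' \in T' : f(\mathrm{root}(T)) \leq t' \leq f(m) \text{ for some } m \in \mathrm{max}(T)\}$, which is interval-closed in $T'$ and hence a subtree with root $f(\mathrm{root}(T))$; this gives a factorization of $f$ as a rooted max-surjective map $T \to S_2$ followed by a subtree inclusion $S_2 \hookrightarrow T'$.

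Next, it is routine to verify that each of the four classes is closed under composition and contains the isomorphisms. The pivotal observation is that in each pair the intersection consists of isomorphisms: a rooted subtree inclusion equals the downward closure of the image of its maximal elements, so if it is moreover max-surjective it is surjective; similarly a subtree inclusion that is also rooted and max-surjective is surjective by interval-closedness. To verify the orthogonal lifting property, consider a commutative square with $p \colon T_1 \to T_2$ in the left class and $i \colon T_3 \to T_4$ in the right class. Using (rooted) max-surjectivity of $p$ together with the fact that $i(T_3) \subseteq T_4$ is downward closed (resp.\ interval closed), one checks that $b(T_2) \subseteq i(T_3)$, and then defines the lift by $d(t) := i^{-1}(b(t))$. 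That $d$ is a morphism in $\Omegac$ follows from the fact that a subtree inclusion reflects both the order and the independence relation; uniqueness is immediate from injectivity of $i$.

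The main bookkeeping subtlety I anticipate is matching the hypotheses on $p$ to the properties of $i(T_3)$: in case \cref{item:fact-1} only max-surjectivity of $p$ is needed since $i(T_3)$ is downward closed, whereas in case \cref{item:fact-2} both the rooted and max-surjective hypotheses on $p$ are needed in order to force $b(\mathrm{root}(T_2))$ and $b(\mathrm{max}(T_2))$ into $i(T_3)$ before invoking interval-closedness.
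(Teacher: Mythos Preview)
Your proposal is correct and follows essentially the same approach as the paper: your intermediate objects $S_1$ and $S_2$ coincide with the paper's factorizing subtrees (the downward closure of $f(T)$, respectively the interval between $f(\mathrm{root}(T))$ and the maximal elements of $f(T)$). The only difference is that the paper dismisses the uniqueness/orthogonality verification as ``straightforward,'' whereas you spell out the orthogonal lifting property explicitly; this is just filling in a detail the paper omits rather than a different strategy.
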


\begin{proof}
    The unique factorization statement for both parts is straightforward.
    To prove part \eqref{item:fact-1}, 
    it remains to show that every map $g \colon T \to T''$ is the composition of a max-surjective map followed by a rooted subtree inclusion.
    Let $T'$ be the rooted subtree of $T''$ consisting of edges $e' \in T''$ such that there exists an edge $e \in T$ with $f(e) \geq e'$.
    The map $g$ factors through $T'$, and by construction, the first map $T \to T'$ is max-surjective.

    Now we turn to part \eqref{item:fact-2}. We would like to show that every map $g \colon T \to T''$ is the composition of a rooted max-surjective map followed by a rooted subtree. In this case, we take the subtree $T'$ of $T''$ to consist of edge $e' \in T''$ such that there exist edges $e_0, e_1 \in T$ with $f(e_0) \geq e' \geq f(e_1)$. The map $g$ factors through $T'$, and by construction, the first map $T \to T'$ is rooted and max-surjective.
\end{proof}

\begin{remark}
    Every morphism $f \colon T \to T'$ in $\Omegac$ factors uniquely as 
\begin{equation}
    T \xrightarrow{f_1} T_1 \xrightarrow{f_2} T_2 \xrightarrow{f_3} T'
\end{equation}
where $f_1$ is rooted max-surjective, $f_2$ is  a  max-surjective subtree inclusion, and $f_2$ is a rooted subtree inclusion.
\end{remark}

\subsection{$k$-dendroidal trees}\label{subsec:k-dendroidal}
Throughout the subsection, let us fix $1 \leq k \leq \infty$.
\begin{definition}\label{def:k-dendroidal}
Let $T$ be a tree. We say that $T$ is a \emph{$k$-dendroidal tree} if every edge of $T$ has valence $\leq k$. We take the definition to be vacuously true when $k = \infty$.
\end{definition}
We denote by $\Omega_{\leqk}$ the full subcategory of $\Omega$ consisting of $k$-dendroidal trees. Similarly, we denote by $\Omegacleqk$ the full subcategory of $\Omegac$ consisting of closed $k$-dendroidal trees. Note that $\Omega_{\leq \infty} = \Omega$ and $\Omegac_{\leq \infty} = \Omegac$.
Given $j > k$, 
We denote by $i_{k}$ the inclusion 
$\Omegacleqk \hookrightarrow \Omegac_{\leq j}$.

Much of the results in \cref{subsec:closed-dendroidal} translate to the setting of $k$-dendroidal trees. The following is an immediate consequence of \cref{prop:tree-closure-uni-prop}:
\begin{corollary}
    The localization functor $(-)^c \colon \Omega \to \Omegac$ restricts to a localization functor $(-)^c \colon \Omega_{\leqk} \to \Omegacleqk$.
\end{corollary}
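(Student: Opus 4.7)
The plan is to deduce this corollary directly from \cref{prop:tree-closure-uni-prop} by observing that the closure operation is compatible with the valence constraint, and then checking that the adjunction restricts cleanly to the full subcategories.

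First I would verify that the closure functor $(-)^c$ preserves $k$-dendroidality. Recall that the valence of an edge $e$ in a dendroidal tree $(T,L)$ is defined purely in terms of the underlying poset $T$ (as the number of minimal elements of $T_{>e}$) and does not involve the subset $L$. Since closure is the operation $(T,L) \mapsto (T,\varnothing)$, which does not modify the underlying poset, the valence of every edge is unchanged by closure. Hence if $(T,L) \in \Omega_{\leqk}$ then $(T,\varnothing) \in \Omegacleqk$, so $(-)^c$ restricts to a functor $\Omega_{\leqk} \to \Omegacleqk$.

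Next I would show this restriction is itself a localization by exhibiting the full subcategory inclusion $j_k \colon \Omegacleqk \hookrightarrow \Omega_{\leqk}$ as a fully faithful right adjoint. Full faithfulness is automatic, so the content is the adjunction identity. For $T \in \Omega_{\leqk}$ and $T' \in \Omegacleqk$, since both $\Omega_{\leqk} \subseteq \Omega$ and $\Omegacleqk \subseteq \Omegac$ are full subcategories, the Hom-sets agree with their ambient counterparts, and the adjunction $(-)^c \dashv j$ on $\Omega$ from \cref{prop:tree-closure-uni-prop} yields
\begin{equation}
\Hom_{\Omegacleqk}(T^c, T') = \Hom_{\Omegac}(T^c, T') \cong \Hom_{\Omega}(T, T') = \Hom_{\Omega_{\leqk}}(T, T').
\end{equation}
Equivalently, one may invoke \cref{prop:tree-closure-uni-prop} directly: the map $\mu \colon \eta \to \CC{0}$ lives in $\Omega_{\leqk}$ since both $\eta$ and $\CC{0}$ consist of a single edge of valence $0$, and thus lie in $\Omega_{\leqk}$ for every $k \geq 1$; the $\mu$-local objects in $\Omega_{\leqk}$ are precisely the closed $k$-dendroidal trees; and for $T \in \Omega_{\leqk}$ the unit $T \to T^c$ is again a $\mu$-localization.

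There is essentially no obstacle here: the only substantive point is the observation that valence is a property of the underlying poset alone, so that the $k$-dendroidal condition is preserved by closure. Everything else is formal from \cref{prop:tree-closure-uni-prop} and the fact that we are restricting along full subcategory inclusions on both sides.
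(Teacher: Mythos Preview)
Your proposal is correct and follows the same approach as the paper, which simply declares the corollary an immediate consequence of \cref{prop:tree-closure-uni-prop} without further argument. Your elaboration—that valence depends only on the underlying poset so closure preserves $k$-dendroidality, and that the adjunction then restricts formally to the full subcategories—is exactly the content the paper leaves implicit.
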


$k$-dendroidal trees also satisfy crucial closure properties with respect to max-surjective maps and subtree inclusions:
\begin{lemma}\label{lem:k-closure}
    Given a map $f \colon K \to K'$ in $\Omega$. 
    \begin{enumerate}
        \item \label{item-enum:closure-1} If $f$ is a subtree inclusion and $K'$ is a $k$-dendroidal tree, then $K$ is also a $k$-dendroidal tree.
        \item \label{item-enum:closure-2} If $f$ is a max-surjective map and $K$ is a $k$-dendroidal tree, then $K'$ is also a $k$-dendroidal tree.
    \end{enumerate}
\end{lemma}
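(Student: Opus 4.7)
For part \ref{item-enum:closure-1}, I would argue directly from the definitions. Fix $e \in K$ with children $c_1, \dots, c_m$ in $K$, so $m$ is the valence of $e$. By injectivity of $f$, the $f(c_j) \in K'$ are distinct, and I would verify each is an \emph{immediate} successor of $f(e)$ in $K'$: if some $y \in K'$ satisfied $f(e) < y < f(c_j)$, then convexity of the image would give $y = f(z)$ for some $z \in K$, and since a subtree inclusion in $\Omega$ realizes $K$ as a convex sub-tree of $K'$ with the induced order, we would obtain $e < z < c_j$ in $K$, contradicting that $c_j$ is a child of $e$. Hence the valence of $e$ in $K$ is bounded by the valence of $f(e)$ in $K'$, which is at most $k$.

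For part \ref{item-enum:closure-2}, the plan is to reduce to the generating max-surjective morphisms of $\Omega$ --- inner face maps $T/e \to T$ (contracting an internal edge $e$), degeneracies (collapsing a unary vertex), and isomorphisms --- and verify that each preserves $k$-dendroidality. Isomorphisms and degeneracies do not increase valences, so these cases are immediate. For an inner face $T/e \to T$, let $p$ and $q$ denote the valences in $T$ of the parent of $e$ and of $e$, respectively. Contracting $e$ merges the two adjacent vertices, so the parent edge acquires valence $p + q - 1$ in $T/e$. Assuming $T/e$ is $k$-dendroidal gives $p + q - 1 \leq k$. Since $e$ is internal $q \geq 1$, and since its parent has $e$ as a child $p \geq 1$; hence $p, q \leq k$. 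All other edges retain their valences, so $T$ is $k$-dendroidal.

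The main obstacle lies in part \ref{item-enum:closure-2}: the reduction to the generating morphisms relies on standard structural results for the dendroidal category, and non-rooted max-surjective morphisms require separate (but trivial) treatment, since they introduce only a linear chain of unary vertices below the image of the root, which preserves the $k$-dendroidal property vacuously.
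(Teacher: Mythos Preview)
Your argument for part \ref{item-enum:closure-1} is correct and spells out what the paper leaves as ``straightforward.''

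For part \ref{item-enum:closure-2} you take a genuinely different route from the paper. The paper argues directly by contrapositive: assuming some edge $x$ of $K'$ has $k+1$ immediate successors $x_1,\ldots,x_{k+1}$, it uses max-surjectivity to find edges $e_i \in K$ with $f(e_i) \geq x_i$, locates the maximal common lower bound $e_0$ of the $e_i$ in $K$, and then shows (using independence-preservation) that the minimal elements $e_i'$ of each interval $K_{e_0 < - \leq e_i}$ are pairwise independent, forcing $e_0$ to have valence $\geq k+1$. This is a short element-level argument that needs no external structure theory of $\Omega$.

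Your approach via the Reedy generators is sound in outline, and it has the virtue of making the claim feel structural, but it leans on a factorization result you would need to cite, and there is one wrinkle. The assertion ``since $e$ is internal, $q \geq 1$'' is not literally true in the paper's terminology: a stump (a maximal non-leaf edge, i.e.\ the output of a nullary vertex) is internal yet has valence $0$, and contracting such an edge gives merged valence $p + q - 1 = p - 1$, from which $p \leq k$ does not follow. The repair is to note that contracting a stump $e$ drops the maximal edge $e$ from the edge set, so the inner face $T/e \to T$ is \emph{not} max-surjective; hence only inner faces at edges of valence $\geq 1$ appear when factoring a max-surjective morphism. With that remark in place your argument goes through, though the paper's direct proof sidesteps the issue entirely.
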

\begin{proof}
    Part \eqref{item-enum:closure-1} is straightforward. As for part \eqref{item-enum:closure-2}, suppose that $K'$ is not a $k$-dendroidal tree. Then there exists an edge $x \in K'$ with  $(k+1)$ minimal edges $x_1, \cdots, x_{n+1}$ in $K'_{> x}$. As $f$ is max-surjective, there exists edges $e_1, \cdots, e_{n+1}$ in $K$ such that $f(e_i) \geq x_i$ in $K$.

    Let $T_0$ be the sub-poset of edges $e \in T$ satisfying that $e \leq e_i$ for all $i$. $T_0$ is non-empty as the root is in $T_0$, and it is linearly ordered as it is a sub-poset of $T_{< e_1}$, which is linearly ordered. Let $e_0$ be the maximal edge of $T_0$. For each $1 \leq i \leq k+1$, let $e'_{i}$ be the minimal element in the poset $T_{e_0 < - \leq e_i}$.
    For each $i$, we claim that  $f(e'_i) > x$: if not, then $f(e'_i) \leq x$ (as $T'_{< x_i}$ is linearly ordered) which implies that  $e'_i \leq e_j$ for all $j$ as $f$ preserves independence. This contradicts the maximality assumption on $e_0$. It follows that $f(e'_i) \geq x_i$ and the $e'_i$'s are pairwise independent. This implies that the valence of $e_0$ is at least $k+1$, which is a contradiction.
\end{proof}
Lastly, we have the $k$-dendroidal analogue of \cref{prop:fact-sys-on-Omegac}:
\begin{corollary}\label{cor:k-fact-sys}
    The classes of (max-surjective, rooted subtree inclusion) morphisms define a factorization system on $\Omegacleqk$. Similarly, the classes of (rooted max-surjective, subtree inclusion) morphisms define a factorization system on $\Omegacleqk$.
\end{corollary}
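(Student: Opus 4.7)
The plan is to deduce the corollary directly from Proposition \ref{prop:fact-sys-on-Omegac} together with Lemma \ref{lem:k-closure}. Since $\Omegacleqk \hookrightarrow \Omegac$ is a full subcategory, the unique lifting (orthogonality) condition required of a factorization system, as well as the properties that the two classes contain all isomorphisms and are closed under composition, are automatically inherited from the ambient category $\Omegac$ once we check that the relevant classes of morphisms in $\Omegacleqk$ are exactly the restrictions of the classes in $\Omegac$. Thus the only real content to verify is that, for every morphism $f \colon T \to T'$ in $\Omegacleqk$, the intermediate object produced by each of the two factorizations of $f$ constructed in the proof of Proposition \ref{prop:fact-sys-on-Omegac} again lies in $\Omegacleqk$.

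For the first factorization system, write $f = f_2 \circ f_1$ with $f_1 \colon T \to T_1$ max-surjective and $f_2 \colon T_1 \hookrightarrow T'$ a rooted subtree inclusion. Since $T$ is a $k$-dendroidal tree and $f_1$ is max-surjective, part \eqref{item-enum:closure-2} of Lemma \ref{lem:k-closure} gives that $T_1$ is $k$-dendroidal. (One could equally appeal to part \eqref{item-enum:closure-1} via the subtree inclusion $T_1 \hookrightarrow T'$, since $T'$ is $k$-dendroidal.) Hence $T_1 \in \Omegacleqk$.

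For the second factorization system, write $f = g_2 \circ g_1$ with $g_1 \colon T \to T_2$ rooted max-surjective and $g_2 \colon T_2 \hookrightarrow T'$ a subtree inclusion. Since a rooted max-surjective map is in particular max-surjective, the same appeal to Lemma \ref{lem:k-closure}\eqref{item-enum:closure-2} (or to \eqref{item-enum:closure-1} applied to $g_2$) shows that $T_2$ is $k$-dendroidal, so $T_2 \in \Omegacleqk$.

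There is essentially no obstacle beyond these observations: given that Proposition \ref{prop:fact-sys-on-Omegac} and Lemma \ref{lem:k-closure} are already in hand, the corollary is a direct consequence of the fact that the $k$-dendroidal condition is preserved by both pieces of either factorization.
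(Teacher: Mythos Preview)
Your proposal is correct and follows precisely the approach the paper intends: the corollary is stated immediately after Lemma~\ref{lem:k-closure} with no proof, signaling that it is a direct consequence of Proposition~\ref{prop:fact-sys-on-Omegac} together with the closure properties of Lemma~\ref{lem:k-closure}. Your write-up faithfully spells out the implicit argument, namely that the intermediate object in either factorization stays $k$-dendroidal by Lemma~\ref{lem:k-closure}, so the factorization systems on $\Omegac$ restrict to $\Omegacleqk$.
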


\section{Unital $k$-restricted $\infty$-operads}
\label{sec:unital-k-restricted-op}
By the works of \cite{Barwick_2018} 
and \cite{two-models}, the $\infty$-category $\Op$ of $\infty$-operads is equivalent to 
the $\infty$-category 
of complete Segal presheaves on $\Omega$. In this section, we prove an unital version of this statement in \cref{cor:opun-eq-Omegac} and define unital $k$-restricted $\infty$-operads as complete Segal presheaves on $\Omegacleq{k}$.

\subsection{Segal presheaves}
In this subsection, we extend the notion of Segal presheaves to presheaves on closed $k$-dendroidal trees and provide some equivalent yet useful criteria for Segal presheaves.
Let $\Spaces$ denote the $\infty$-category of spaces. For any $\infty$-category $\cC$, we write $\cP(\cC) \coloneqq \Fun(\cC^\op, \Spaces)$ for the $\infty$-category of presheaves on $\cC$. 

\begin{definition}\label{def:Omega-el}
    Let $T$ be a tree. 
    We denote by $(\Omega^{\el})_{/T}$ be the full subcategory of the over-category $\Omega_{/T}$ consisting of morphisms $(f \colon X \to T)$ satisfying the following: 
    \begin{enumerate}
        \item $X$ is either the edge $\eta$ or a corolla $C_n$.
        \item $f$ is a subtree inclusion. Furthermore, if $X$ is $n$-corolla with root $x$, then $f(x)$ also has valence $n$.
    \end{enumerate}
\end{definition}
Note that this assignment is functorial: a subtree inclusion $f \colon T \to T'$ induces a map $f^{\el} \colon (\Omega^{\el})_{/T} \to (\Omega^{\el})_{/T'}$.

\begin{definition}[{\cite[Definition 4.2.1]{MR4038556}}]\label{def:Segal-condition}
    Let $\cF$ be a presheaf on $\Omega$. We say that $\cF$ is a \emph{Segal presheaf} if for every tree $T$, the canonical map 
    \begin{equation}\label{eq:Segal-condition}
        \cF(T) \to \lim_{X \in ((\Omega^{\el})_{/T})^{\op}} \cF(X)
    \end{equation}
    is an equivalence.
\end{definition}

We generalize the notion of Segal presheaves to $\Omegac$ and $\Omegacleqk$:
\begin{definition}\label{def:closed-segal-condition}
    Fix $1 \leq k \leq \infty$ and $\cF$ a presheaf on $\Omegacleqk$. We say that $\cF$ is a \emph{Segal presheaf} if for every closed $k$-dendroidal tree $T$,  the canonical map 
    \begin{equation}\label{eq:closed-segal-condition}
        \cF(T) \to \lim_{X \in ((\Omega^{\el})_{/T})^{\op}} \cF(X^c)
    \end{equation}
    is an equivalence.
\end{definition}
Let $\cC$ be $\Omega$, $\Omegac = \Omegacleq{\infty}$, and $\Omegacleqk$, 
we denote by $\Seg(\cC)$ the full subcategory of Segal presheaves on $\cC$.

\begin{observation}\label{obs:Ck-check-equivalence}
   Let $f \colon \cF_1 \to \cF_2$ be a map of Segal presheaves on $\Omegacleqk$. It follows from the Segal condition that $f$ is an equivalence if and only if the induced map $$f(\CC{i}) \colon \cF_1(\CC{i}) \to \cF_2(\CC{i})$$ is an equivalence for every $0 \leq i \leq k$. 
\end{observation}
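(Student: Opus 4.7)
The forward direction is immediate, so I will focus on the converse. The plan is to fix an arbitrary closed $k$-dendroidal tree $T$ and reduce $f(T)$ to a limit of the values $f(\CC{i})$ with $i \leq k$ via the Segal condition.

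First, I would form the naturality square
\[
\begin{tikzcd}
\cF_1(T) \ar[r] \ar[d, "f(T)"'] & \displaystyle\lim_{X \in ((\Omega^{\el})_{/T})^{\op}} \cF_1(X^c) \ar[d] \\
\cF_2(T) \ar[r] & \displaystyle\lim_{X \in ((\Omega^{\el})_{/T})^{\op}} \cF_2(X^c)
\end{tikzcd}
\]
whose horizontal arrows are equivalences by \cref{def:closed-segal-condition}. To conclude that $f(T)$ is an equivalence it therefore suffices to prove the right vertical map is; since limits in $\Spaces$ preserve equivalences, it is enough to check that $f(X^c)$ is an equivalence for each $X \in (\Omega^{\el})_{/T}$.

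Next I would unwind \cref{def:Omega-el}: such an $X$ is either the edge $\eta$, giving $X^c = \overline{\eta} = \CC{0}$, or an $n$-corolla $C_n$ mapped so that its root hits an edge of $T$ of valence $n$, giving $X^c = \CC{n}$. The hypothesis $T \in \Omegacleqk$ forces every edge of $T$ to have valence at most $k$, so $n \leq k$ in the corolla case. Thus $X^c \in \{\CC{0}, \CC{1}, \ldots, \CC{k}\}$, and $f(X^c)$ is one of the assumed equivalences.

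I do not foresee any serious obstacle: the only point requiring care is that the valence bound genuinely uses the $k$-dendroidal hypothesis on $T$, and conversely, without this hypothesis the Segal limit would involve corollas of arbitrarily large arity and the stated criterion would fail.
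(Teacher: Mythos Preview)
Your proposal is correct and is exactly the intended unpacking of the observation: the paper gives no separate proof beyond the phrase ``It follows from the Segal condition,'' and your argument is precisely that---use \cref{def:closed-segal-condition} to rewrite $\cF_i(T)$ as a limit over $((\Omega^{\el})_{/T})^{\op}$, then note that the indexing objects have closures of the form $\CC{n}$ with $n\leq k$ because $T$ is $k$-dendroidal.
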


\begin{observation}\label{obs:k-restriction-preserves-segal}
Fix $1 \leq k \leq j \leq \infty$.
    It follows directly from the definition that $i_k^* \colon \cP(\Omegac_{\leq j}) \to \cP(\Omegacleqk)$ preserves Segal presheaves, hence restricts to a functor $i_k^* \colon \Seg(\Omegac_{\leq j}) \to \Seg(\Omegacleqk)$. 
\end{observation}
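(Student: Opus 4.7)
The plan is to unwind both Segal conditions and recognize them as the \emph{same} diagram, once one verifies that all trees appearing in the limit lie in $\Omegacleqk$.

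Fix $\cF \in \Seg(\Omegac_{\leq j})$ and a closed $k$-dendroidal tree $T$. I want to show the canonical map
\begin{equation*}
(i_k^*\cF)(T) \;\longrightarrow\; \lim_{X \in ((\Omega^{\el})_{/T})^{\op}} (i_k^*\cF)(X^c)
\end{equation*}
is an equivalence. Since the indexing category $(\Omega^{\el})_{/T}$ is defined as a subcategory of $\Omega_{/T}$ and therefore depends only on $T \in \Omega$, not on which subcategory of $\Omega$ we regard $T$ as belonging to, the diagram above will coincide with the Segal diagram for $\cF$ at $T$ in $\Omegac_{\leq j}$ as soon as every $X^c$ occurring in it lies in $\Omegacleqk$ (so that $(i_k^*\cF)(X^c) = \cF(X^c)$ on the nose).

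The key step is therefore the following containment check. For $X \in (\Omega^{\el})_{/T}$ with $X = \eta$ we have $X^c = \CC{0} \in \Omegacleqk$ trivially. For $X = C_n$, the defining condition in \cref{def:Omega-el} gives a subtree inclusion $C_n \hookrightarrow T$, and by \cref{lem:k-closure}\eqref{item-enum:closure-1} applied to this subtree inclusion (and $T$ being $k$-dendroidal), $C_n$ is itself $k$-dendroidal; in particular $n \leq k$ and hence $X^c = \CC{n} \in \Omegacleqk$.

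With this containment in hand, the Segal diagram for $i_k^*\cF$ at $T$ is \emph{literally} the Segal diagram for $\cF$ at $T$ (viewed in $\Omegac_{\leq j}$), which is an equivalence by the Segal hypothesis on $\cF$. This shows $i_k^*\cF \in \Seg(\Omegacleqk)$, and hence $i_k^*$ restricts as claimed. There is no real obstacle here; the only substantive input is the closure property in \cref{lem:k-closure}\eqref{item-enum:closure-1}, which guarantees that restriction along $i_k$ never needs to evaluate $\cF$ outside of $\Omegacleqk$.
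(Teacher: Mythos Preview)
Your argument is correct and is a faithful unpacking of what the paper means by ``follows directly from the definition''; the paper gives no proof beyond that phrase, and your verification that each $X^c$ in the Segal diagram lies in $\Omegacleqk$ is exactly the point. One small remark: invoking \cref{lem:k-closure}\eqref{item-enum:closure-1} is slightly heavier than necessary, since the valence-matching clause in \cref{def:Omega-el} already forces $n$ to equal the valence of an edge of $T$, hence $n \leq k$ immediately.
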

Next we relate Segal presheaves on $\Omega$ and $\Omegac$:
\begin{proposition}\label{prop:uni-prop-for-segal-on-Omegac}
    The following holds:
    \begin{enumerate}
        \item \label{enum-prop:1} 
        The pullback functor $(-)^{c,*} \colon \cP(\Omegac) \to \cP(\Omega)$ preserves Segal presheaves.
        \item \label{enum-prop:2}
         The induced functor
            $(-)^{c, *} \colon \Seg(\Omegac) \to \Seg(\Omega)$
        is fully faithful.
        \item \label{enum-prop:3} The image of $(-)^{c, *}$ consists of Segal presheaves $\cF$ on $\Omega$ such that the map $\cF(\CC{0}) \to \cF(\eta)$ is an equivalence.
    \end{enumerate}
\end{proposition}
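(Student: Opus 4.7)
The strategy is to leverage \cref{prop:tree-closure-uni-prop}, which identifies $\Omegac$ with the localization $\Omega[\{\mu\}^{-1}]$. By standard localization theory, $(-)^{c,*} \colon \cP(\Omegac) \hookrightarrow \cP(\Omega)$ is fully faithful with essential image the \emph{$\mu$-local} presheaves, i.e., those $\cG$ for which $\cG(\mu) \colon \cG(\CC{0}) \to \cG(\eta)$ is an equivalence. Given this, parts (2) and (3) follow easily from part (1), which carries the real content.

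For part (1), fix $\cF \in \Seg(\Omegac)$ and set $\cG = (-)^{c,*}\cF$. To verify the Segal condition for $\cG$ at $T \in \Omega$, I would compare it with the Segal condition for $\cF$ at $T^c$. Post-composition with the unit $T \to T^c$ gives a fully faithful inclusion $i \colon (\Omega^{\el})_{/T} \hookrightarrow (\Omega^{\el})_{/T^c}$ whose extras are precisely the $C_0 \to T^c$ landing at former leaves of $T$. The combinatorial facts $\Hom_{\Omega}(\eta, C_0) = \{\mu\}$ and $\Hom_{\Omega}(C_n, C_0) = \varnothing$ for $n \geq 1$ (since $\Free(C_0)$ has no $n$-ary operations) imply that each extra $b = (C_0 \to T^c)$ at a former leaf $e$ participates in only one non-identity morphism of $(\Omega^{\el})_{/T^c}$, namely $(\eta \to T^c)_e \to b$ via $\mu$. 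Since $\mu^c = \id_{\CC{0}}$, the presheaf $\cF$ sends this morphism to the identity; hence in the Segal limit over $((\Omega^{\el})_{/T^c})^{\op}$, the value at each extra $b$ is forced to equal the value at $(\eta \to T^c)_e$, which is already recorded by the corresponding object $(\eta \to T)_e$ in $(\Omega^{\el})_{/T}$. The extras thus contribute redundant data, so the two Segal limits agree, and Segal of $\cF$ at $T^c$ yields Segal of $\cG$ at $T$.

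Part (2) is then formal: restriction to the full subcategories $\Seg \subseteq \cP$ preserves the full faithfulness established by the localization. For part (3), part (1) combined with the localization shows the essential image of $(-)^{c,*} \colon \Seg(\Omegac) \to \Seg(\Omega)$ lies within the $\mu$-local Segal presheaves. For the reverse inclusion, given $\mu$-local $\cG \in \Seg(\Omega)$, the localization produces a unique $\cF \in \cP(\Omegac)$ with $(-)^{c,*}\cF = \cG$, and $\cF$ is automatically Segal: for every closed tree $T$, the Segal condition for $\cF$ at $T$ (indexed by $((\Omega^{\el})_{/T})^{\op}$ and evaluating $\cF$ at $X^c$) coincides term-by-term with the Segal condition for $\cG$ at $T$ (using $\cG(X) = \cF(X^c)$), so Segal of $\cG$ restricted to closed trees gives Segal of $\cF$. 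The main obstacle throughout is the combinatorial case analysis in part (1).
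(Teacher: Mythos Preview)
Your approach is essentially the same as the paper's: both arguments rest on \cref{prop:tree-closure-uni-prop} and the standard localization fact that $(-)^{c,*}$ is fully faithful with image the $\mu$-local presheaves, from which parts (2) and (3) follow. The paper dispatches part (1) in one line (``direct consequence of the definition''), whereas you correctly recognize that a little work is needed to pass from the Segal condition for $\cF$ at $T^c$ (indexed over $(\Omega^{\el})_{/T^c}$) to the Segal condition for $(-)^{c,*}\cF$ at $T$ (indexed over $(\Omega^{\el})_{/T}$), and you supply the comparison. For part (3) the paper phrases things slightly differently---it argues directly that $\cF(T^c)\to\cF(T)$ is an equivalence by observing $T^c$ is built from $T$ by attaching $\CC{0}$'s at the leaves---but this is equivalent to your $\mu$-locality formulation.

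One small correction: your claim that $\Hom_{\Omega}(C_n,C_0)=\varnothing$ for all $n\geq 1$ is false at $n=1$, since $\Free_{\Op}(C_0)$ does have the identity as a $1$-ary operation, giving a degeneracy $C_1\to C_0$. This does not damage the argument, because what you actually need is that there are no morphisms in the over-category $(\Omega^{\el})_{/T^c}$ from any $(C_n\to T^c)$ with $n\geq 1$ to an extra $(C_0\to T^c)_e$; that still holds, since a subtree inclusion $C_1\hookrightarrow T^c$ is injective on edges and hence cannot factor through the single-edge $C_0$. Rephrase the claim at the level of the over-category and the rest goes through.
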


\begin{proof}
    Part \eqref{enum-prop:1} is a direct consequence of the definition of Segal presheaves.
    Since  $(-)^c$ is a localization functor  (\cref{prop:tree-closure-uni-prop}), 
    by \cite[Prop.~5.2.7.12]{HTT}, the pullback functor 
    \begin{equation}
        (-)^{c, *} \colon \cP(\Omegac) \to \cP(\Omega)
    \end{equation}
    is fully faithful and its image consists of presheaves $\cF$ on $\Omega$ such that the map $\cF(T^c) \to \cF(T)$ is an equivalence for all $T \in \Omega$. This proves part \eqref{enum-prop:2}.

    Now we prove \eqref{enum-prop:3}. Let $\cF$ be a Segal presheaf $\cF$ on $\Omega$ such that $\cF(\CC{0}) \to \cF(\eta)$ is an equivalence. 
    It follows from the Segal condition that the canonical map $\cF(T^c) \to \cF(T)$ is an equivalence for all $T \in \Omega$, as $T^c$ is obtained from $T$ by gluing $\CC{0}$ to the leaves on $T$. Therefore $\cF$ lifts to a presheaf $\cF' \in \cP(\Omega)$. Furthermore, $\cF'$ is a Segal presheaf as $\cF$ is a Segal presheaf.
\end{proof}

We end this subsection with some useful criteria for the Segal condition.
First, we have to introduce some terminology.
\begin{notation}\label{nota:upper-and-lower-subset}
    Let $T$ be a tree and $e \in T$ be an edge of $T$. 
We denote by $T_{\geq e} = \{x \in T| x \geq e\}$ the \emph{upper subtree} of $T$ with root $e$,  $T^e = \{x \in T|x \ngtr e\}$ the \emph{lower subtree} of $T$ with leaf $e$, and $v(e)$ the subset of minimal elements in $T_{> e}$. When $e$ is a non-maximal edge, we denote by $C_e$ the $|v(e)|$-corolla subtree of $T$ with root $e$.
\end{notation}
Note that $|v(e)|$ is the valence of $e$.
\begin{proposition}\label{prop:cutting-Segal}
   Let $\cF$ be a presheaf on $\Omega$. The following are equivalent:
   \begin{enumerate}
       \item \label{item:cutting-1}
       $\cF$ is a Segal presheaf (\cref{def:Segal-condition}).
       \item \label{item:cutting-2} For every tree $T$ and inner edge $e$ of $T$, the canonical map 
   \begin{equation}\label{eq:cutting-segal-cond-2}
       \cF(T) \to \cF(T_{\geq e}) \times_{\cF(e)} \cF(T^e)  
   \end{equation}
   is an equivalence. 
        \item \label{item:cutting-3} For every tree $T$ and non-maximal edge $e_0$ of $T$, the canonical map 
        \begin{equation}\label{eq:cutting-segal-cond-3}
            \cF(T) \to \cF(T^{e_0}) \times_{\cF(e_0)} \cF(C_e) \times_{\prod_{e \in v(e_0)} \cF(e)} \,\prod_{e \in v(e_0)}\cF(T_{\geq e})
        \end{equation}
        is an equivalence. 
        \item \label{item:cutting-4} For every tree $T$, the canonical map 
        \begin{equation}\label{eq:cutting-segal-cond-4}
            \cF(T) \to \cF(C_e) \times_{\prod_{e \in v(r)} \cF(e)} \,\prod_{e \in v(r)}\cF(T_{\geq e})
        \end{equation}
        is an equivalence.
        Here $r$ is the root of $T$.
   \end{enumerate}
\end{proposition}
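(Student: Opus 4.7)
The plan is to use $(1)$ as the hub and establish each equivalence $(1) \Leftrightarrow (i)$ for $i \in \{2,3,4\}$. All implications rest on the following structural observation: for any inner edge $e$ of $T$, the indexing category decomposes as a pushout
\begin{equation*}
(\Omega^{\el})_{/T} \simeq (\Omega^{\el})_{/T^e} \sqcup_{\ast_e} (\Omega^{\el})_{/T_{\geq e}},
\end{equation*}
where $\ast_e$ is the one-object subcategory picking out the elementary subtree $\eta \hookrightarrow T$ at $e$. The point is that cutting $T$ at the inner edge $e$ partitions its corollas (no corolla straddles the cut), so the only shared elementary subtree between $T^e$ and $T_{\geq e}$ is the edge $\eta$ at $e$. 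An entirely analogous pushout description holds when $T$ is cut into its root corolla $C_r$ glued with the upper subtrees $T_{\geq e}$ for $e \in v(r)$.

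For the forward direction, taking limits of these pushouts directly converts $(1)$ into $(2)$ (cut at $e$) and $(4)$ (cut at $C_r$); $(3)$ follows by combining the two, i.e.\ cutting at $e_0$ and then at the root corolla of $T_{\geq e_0}$. The implication $(3) \Rightarrow (4)$ is also immediate: specialize to $e_0 = r$, so that $T^r = \eta$ and the pullback $\cF(\eta) \times_{\cF(r)} \cF(C_r)$ collapses to $\cF(C_r)$.

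Each of the reverse implications $(2) \Rightarrow (1)$, $(3) \Rightarrow (1)$, $(4) \Rightarrow (1)$ is established by induction on the number of corollas in $T$. The base cases $T = \eta$ and $T = C_n$ are trivial since $T$ is terminal in $(\Omega^{\el})_{/T}$. In the inductive step, apply the given decomposition (cutting at an internal edge for $(2)$ or $(3)$, or at the root corolla for $(4)$) to rewrite $\cF(T)$ as a pullback of $\cF$-values on strictly smaller trees. By the induction hypothesis these satisfy the Segal condition, and reassembling via the corresponding pushout description for $(\Omega^{\el})_{/T}$ yields the full Segal limit for $T$.

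\textbf{Main obstacle.} The delicate point is verifying the pushout decomposition of $(\Omega^{\el})_{/T}$ itself: one must check that every morphism in $(\Omega^{\el})_{/T}$ connecting an edge subtree to a corolla subtree occurs entirely within one side of the pushout, with no additional identifications beyond $\ast_e$. Once this is verified, converting pushouts of indexing categories into pullbacks of limits (and reassembling iterated pullbacks into a single Segal limit) is standard.
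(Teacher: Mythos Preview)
Your proposal is correct and follows essentially the same approach as the paper: both identify the pushout decomposition of $(\Omega^{\el})_{/T}$ along an inner edge as the key structural input, derive the forward implications by converting pushouts of indexing categories into pullbacks of limits, and prove the reverse implications by induction on the size of $T$ (you count corollas, the paper counts internal edges). The organization differs slightly---the paper proves $(1)\Rightarrow(2)$, sketches $(1)\Rightarrow(3)\Rightarrow(4)$, then proves $(2)\Rightarrow(1)$ and $(4)\Rightarrow(1)$ rather than treating $(1)$ as a hub---but the content is the same.
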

\begin{proof}
    Given a tree $T$ and an inner edge $e$ of $T$, consider the following diagram:
    \begin{equation}\label{eq:omega-el-comm}
        \begin{tikzcd}
            (\Omega^{\el})_{/e} \ar[r] \ar[d] & (\Omega^{\el})_{/T^e} \ar[d]\\
            (\Omega^{\el})_{/T_{\geq e}} \ar[r] & (\Omega^{\el})_{/T}.
        \end{tikzcd}
    \end{equation}
    Note that $(\Omega^{\el})_{/e} = \{e\}$. It is straightforward to see that 
    \cref{eq:omega-el-comm} is a pushout of ($\infty$-)categories, that is, 
    $\Omega^{\el}_{/T} \simeq (\Omega^{\el})_{/T_{\geq e}} \sqcup_{\{e\}} (\Omega^{\el})_{/T^e}$.

    Now we show that condition \eqref{item:cutting-1} implies condition \eqref{item:cutting-2}: suppose $\cF$ is a Segal presheaf, then \eqref{eq:cutting-segal-cond-2} is the composite of equivalences: 
    \begin{equation}
        \begin{aligned}
            \cF(T) & \simeq \lim_{X \in ((\Omega^{\el})_{/T})^{\op}} \cF(X)\\ 
            & \simeq \lim_{X \in ((\Omega^{\el})_{/T_{\geq e}} \sqcup_{e} (\Omega^{\el})_{/T^e})^{\op}} \cF(X) \\ 
            & \simeq \left(\lim_{X_1 \in ((\Omega^{\el})_{/T_{\geq e}})^{\op}} \cF(X_1)\right) \times_{\cF(e)} \left(\lim_{X_2 \in ((\Omega^{\el})_{/T^e})^{\op}} \cF(X_2)\right) \\ 
            & \simeq \cF(T_{\geq e}) \times_{\cF(e)} \cF(T^e).
        \end{aligned}
    \end{equation}
    A similar argument shows that condition \eqref{item:cutting-1} implies condition \eqref{item:cutting-3}, which implies condition \eqref{item:cutting-4}.

    Conversely, suppose $\cF$ satisfies condition \eqref{item:cutting-2}, we are going to show that $\cF$ satisfies condition \eqref{item:cutting-1}. 
    We are going to induct on the number $n$ of internal edges of $T$. 
    The base $n = 1$ case is straightforward. For the inductive step, we pick a non-maximal internal edge $e$ of $T$. Then both $T_{\geq e}$ and $T^e$ have fewer than $n$ internal edges. 
    Now the map \eqref{eq:Segal-condition} is an equivalence as it is the composite of equivalences:
    \begin{equation}
        \begin{aligned}
            \cF(T) &\simeq \cF(T_{\geq e}) \times_{\cF(e)} \cF(T^e) \\
            &\simeq \left(\lim_{X_1 \in ((\Omega^{\el})_{/T_{\geq e}})^{\op}} \cF(X_1)\right) \times_{\cF(e)} \left(\lim_{X_2 \in ((\Omega^{\el})_{/T^e})^{\op}} \cF(X_2)\right) \\ 
            &\simeq \lim_{X \in ((\Omega^{\el})_{/T_{\geq e}} \sqcup_{e} (\Omega^{\el})_{/T^e})^{\op}} \cF(X) \\ 
            &\simeq \lim_{X \in ((\Omega^{\el})_{/T})^{\op}} \cF(X).
        \end{aligned}
    \end{equation}
    A similar argument shows that condition \eqref{item:cutting-4} implies condition \eqref{item:cutting-1} by inductively cutting at the root. This completes the proof.
\end{proof}
Now, we move to the closed $k$-dendroidal setting.
For a closed tree $T$ and an internal edge $e$, we denote by $\overline{e}$ and $\overline{T^e}$ the closure of $e$ and $T^e$. Note that $T_{\geq e}$ is already closed. The following proposition can be proven by the same argument:
\begin{proposition}\label{prop:closed-cutting-Segal}
    Fix $1 \leq k \leq \infty$ and  $\cF$ a presheaf on $\Omegac_{\leqk}$. The following are equivalent:
   \begin{enumerate}
       \item \label{item:closed-cutting-1}
       $\cF$ satisfies the Segal condition (\cref{def:closed-segal-condition}).
       \item \label{item:closed-cutting-2} 
       For every closed $k$-dendroidal tree $T$ and inner edge $e$ of $T$,  the canonical map 
   \begin{equation}\label{eq:closed-cutting-segal-cond-2}
       \cF(T) \to \cF(T_{\geq e}) \times_{\cF(\overline{e})} \cF(\overline{T^e})  
   \end{equation}
   is an equivalence. 
        \item \label{item:closed-cutting-3} 
        For every closed $k$-dendroidal tree $T$ and non-maximal edge $e_0$ of $T$, the canonical map 
        \begin{equation}\label{eq:closed-cutting-segal-cond-3}
            \cF(T) \to \cF(\overline{T^{e_0}}) \times_{\cF(\overline{e_0})} \cF(\CC{n}) \times_{\prod_{e \in v(e_0)} \cF(\overline{e})} \,\prod_{e \in v(e_0)}\cF(T_{\geq e})
        \end{equation}
        is an equivalence. 
        \item \label{item:closed-cutting-4} 
        For every closed $k$-dendroidal tree $T$, the canonical map 
        \begin{equation}\label{eq:closed-cutting-segal-cond-4}
            \cF(T) \to \cF(\CC{r}) \times_{\prod_{e \in v(r)} \cF(\overline{e})} \,\prod_{e \in v(r)}\cF(T_{\geq e})
        \end{equation}
        is an equivalence.
        Here $r$ is the root of $T$.
   \end{enumerate}
\end{proposition}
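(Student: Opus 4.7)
The plan is to mirror the proof of \cref{prop:cutting-Segal}, with the essential combinatorial input being the same pushout decomposition of elementary overcategories; the closed setting requires additional care in identifying the limits with $\cF$-values and in tracking closure properties via \cref{lem:k-closure}.

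First I would establish, for $T \in \Omegacleq{k}$ and inner edge $e$, the pushout decomposition
\begin{equation}
    (\Omega^{\el})_{/T} \simeq (\Omega^{\el})_{/T_{\geq e}} \sqcup_{\{e\}} (\Omega^{\el})_{/T^e},
\end{equation}
by the same case analysis as in \cref{prop:cutting-Segal}: every elementary subtree inclusion $X \to T$ is supported on either $T_{\geq e}$ or $T^e$, with the edge $e$ as the only overlap. Both $T_{\geq e}$ (a subtree of $T$) and $\overline{T^e}$ (whose underlying poset agrees with the subtree $T^e$) then lie in $\Omegacleq{k}$ by \cref{lem:k-closure}.

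The key subtle point, absent in the non-closed case, is the identification
\begin{equation}
    \lim_{X \in ((\Omega^{\el})_{/T^e})^{\op}} \cF(X^c) \simeq \cF(\overline{T^e}).
\end{equation}
The right-hand side, by the Segal condition applied to $\overline{T^e}$, is a limit over $(\Omega^{\el})_{/\overline{T^e}}$; this category contains, beyond the objects of $(\Omega^{\el})_{/T^e}$, an extra subtree inclusion $C_0 \hookrightarrow \overline{T^e}$ landing at $e$ (since $e$ becomes a non-leaf valence-$0$ edge in the closure, whereas in $T^e$ it is a leaf and admits no $0$-ary operation). I would verify that the inclusion $(\Omega^{\el})_{/T^e} \hookrightarrow (\Omega^{\el})_{/\overline{T^e}}$ is initial by checking that the comma category at each such extra object $C_0|_e$ has a terminal object, namely the edge inclusion $\eta|_e$ together with its canonical map $\eta \to C_0$ over $\overline{T^e}$. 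Hence the two limits of $\cF \circ (-)^c$ agree.

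With these ingredients in hand, the implications \eqref{item:closed-cutting-1}$\Rightarrow$\eqref{item:closed-cutting-2}$\Rightarrow$\eqref{item:closed-cutting-3}$\Rightarrow$\eqref{item:closed-cutting-4} follow by applying the pushout decomposition and iterating the cut at successive edges, exactly as in the proof of \cref{prop:cutting-Segal}. The converse direction \eqref{item:closed-cutting-2}$\Rightarrow$\eqref{item:closed-cutting-1} (which also subsumes \eqref{item:closed-cutting-4}$\Rightarrow$\eqref{item:closed-cutting-1} by iteration) proceeds by induction on the number of internal edges of $T$: the base case is $\CC{0}$ or $\CC{n}$, where the Segal condition is tautological, and the inductive step cuts at a non-maximal inner edge to reduce to the strictly smaller trees $T_{\geq e}$ and $\overline{T^e}$. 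I expect the main obstacle to be precisely the identification described in the previous paragraph, where the closed/non-closed distinction produces genuine extra combinatorics in the overcategories that must be shown not to affect the relevant limit.
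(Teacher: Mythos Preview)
Your overall strategy matches the paper's (which simply says ``the same argument''), and you have correctly isolated the one place where the closed setting adds content: the identification
\[
\lim_{X \in ((\Omega^{\el})_{/T^e})^{\op}} \cF(X^c) \;\simeq\; \cF(\overline{T^e}).
\]
However, your proposed justification of this step does not work as stated. You check that the comma category $J_{/C_0}$ (objects of $J = (\Omega^{\el})_{/T^e}$ mapping \emph{into} the extra object $C_0|_e$) is contractible. That verifies that $J \hookrightarrow I = (\Omega^{\el})_{/\overline{T^e}}$ is initial in the sense governing limits of functors \emph{out of} $I$. But the Segal limit is taken over $I^{\op}$, so what you need is that $J^{\op} \hookrightarrow I^{\op}$ is initial, equivalently that $J \hookrightarrow I$ is cofinal, equivalently that the under-comma $J_{C_0/}$ is contractible. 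It is not: there are no maps $C_0 \to X$ in $\Omega$ for $X = \eta$ or $X = C_n$ with $n \geq 1$, so $J_{C_0/}$ is empty.

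The two limits nonetheless agree, but for a reason specific to the diagram $\cF((-)^c)$ rather than an abstract (co)finality of the inclusion. The extra object $C_0|_e$ in $I^{\op}$ has a unique nonidentity morphism, namely $C_0|_e \to \eta|_e$ (the opposite of $\mu$), and $\cF((-)^c)$ sends this morphism to the identity on $\cF(\CC{0})$ since $\mu^c = \id_{\CC{0}}$ by \cref{prop:tree-closure-uni-prop}. One clean way to package this: the inclusion $J^{\op} \hookrightarrow I^{\op}$ has a left adjoint $L$ sending $C_0|_e \mapsto \eta|_e$, and because $\cF((-)^c)$ inverts the unit of this adjunction it factors through $L$; since $L$ is a left adjoint it is coinitial, hence $\lim_{I^{\op}} \cF((-)^c) \simeq \lim_{J^{\op}} \cF((-)^c)$. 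With this correction the rest of your argument goes through.
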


\subsection{Complete Segal presheaves}
In this subsection, we define complete Segal presheaves in the closed $k$-dendroidal setting. We fix $1 \leq k \leq j \leq \infty$.
\begin{notation}
    Recall from \cref{obs:simplex-embeds-in-dendroidal} that we have an inclusion $r \colon \Delta \to \Omega$. We will abuse notation and also denote by $r$ the composite $\Delta \xrightarrow{r} \Omega \xrightarrow{(-)^c} \Omegac$ as well as its factorization through $\Omegacleqk$. 
\end{notation}
\begin{observation}\label{obs:k-1-iso}
    Let $k = 1$. Then, the map $r \colon \Delta \xrightarrow{r} \Omegacleq{1}$ is an equivalence.
\end{observation}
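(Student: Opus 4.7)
The plan is to verify directly that $r \colon \Delta \to \Omegacleq{1}$ is both essentially surjective and fully faithful, using the explicit description of $\Omegac$ from \cref{obs:closed-trees-description}.

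For essential surjectivity, let $T$ be a closed $1$-dendroidal tree. Since every edge of $T$ has valence $\leq 1$, for every non-maximal edge $e$ the set of minimal elements in $T_{>e}$ has cardinality exactly one. Combined with the axiom that $T_{\leq x}$ is linearly ordered for every $x$, an easy induction shows that $T$ itself must be linearly ordered: writing out the chain starting from the root, at each step there is a unique successor until we reach a maximal element. Hence, as a poset, $T$ is isomorphic to $[n]$ for some $n \geq 0$, and since $T$ is closed the leaf set is empty, so $T = ([n], \varnothing) = r([n])$.

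For fully faithfulness, I would invoke \cref{obs:closed-trees-description}: a morphism in $\Omegac$ is an order-preserving map that preserves independence. For the linear posets $[n]$ and $[m]$, any two distinct elements are comparable, so no pair is independent and the independence-preservation clause is automatic. Therefore
\begin{equation}
\Hom_{\Omegacleq{1}}\bigl(r[n], r[m]\bigr) = \Hom_{\mathrm{Poset}}([n], [m]) = \Hom_{\Delta}([n], [m]),
\end{equation}
where the second equality is the standard identification of $\Delta$ with the category of non-empty finite linearly ordered sets. The action of $r$ on morphisms is the identity on underlying order-preserving maps, which matches this bijection.

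There is no real obstacle here; the statement is essentially a combinatorial unwinding once one observes that the valence constraint forces the tree to be a chain, and that linearity trivializes the independence-preservation condition. The only minor point to double-check is that $r$ genuinely factors through $\Omegacleq{1}$, which is immediate since each $([n],\varnothing)$ has all valences $\leq 1$.
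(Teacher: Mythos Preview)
Your argument is correct. The paper states this as an observation without proof, treating it as evident; your direct verification via essential surjectivity and full faithfulness, using the explicit description of $\Omegac$ from \cref{obs:closed-trees-description}, is exactly the natural way to justify it.
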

    Pulling back along $r$ takes Segal presheaves on $\Omega$, $\Omegac$, and $\Omegacleqk$ to Segal spaces in the sense of \cite{rezk2001model}. 
\begin{definition}\label{def:complete-segal-presh}
    Let $\cC$ be $\Omega$, $\Omegac$, or $\Omegacleqk$. 
    A Segal presheaf $\cF$ on $\cC$ is \emph{complete} if the Segal space $r^*\cF$ is a \emph{complete} Segal space in the sense of \cite{rezk2001model}.
    We denote by $\Segcpl(\cC)$ the full subcategory of \emph{complete Segal presheaves} on $\cC$.
\end{definition}
By \cite{MR2342834},
    the $\infty$-category $\Segcpl(\Delta)$ of complete Segal spaces is equivalent to the $\infty$-category of $\infty$-categories $\cat$.

\begin{observation}\label{obs:k-1-iso-part-II}
    By definition, the pullback functor 
    $r^* \colon \Seg(\Omegacleqk) \to \Seg(\Delta)$ preserves complete objects.
    For $k = 1$ this induces an isomorphism 
    $\Segcpl(\Omegacleq{1}) \simeq \Segcpl(\Delta) \simeq \cat$.
\end{observation}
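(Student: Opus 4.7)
The plan is to treat the two assertions separately; both reduce to earlier statements and require essentially no new work.

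For the first assertion, I would simply unwind \cref{def:complete-segal-presh}. Completeness of a Segal presheaf $\cF$ on $\Omegacleqk$ is defined to mean that $r^*\cF$ is a complete Segal space on $\Delta$. Hence preservation of completeness by $r^*$ is tautological, and the ``by definition'' qualifier in the statement is literal.

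For the $k=1$ claim, I would proceed in three stages. First, by \cref{obs:k-1-iso}, the functor $r \colon \Delta \to \Omegacleq{1}$ is an equivalence of ordinary categories, so pullback induces an equivalence of presheaf $\infty$-categories $r^* \colon \cP(\Omegacleq{1}) \xrightarrow{\simeq} \cP(\Delta)$. Second, I would check that this equivalence restricts to the Segal subcategories. The key point is that under $r$, the elementary subcategory $(\Omega^{\el})_{/T}$ used in \cref{def:closed-segal-condition} for a closed $1$-dendroidal tree $T = r([n])$ corresponds to the subcategory of $\Delta_{/[n]}$ spanned by $[0]$ and $[1]$, which is precisely the elementary subcategory used to formulate the Segal condition for simplicial spaces. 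Thus the Segal condition on $\Omegacleq{1}$ matches the usual Segal condition on $\Delta$ under the equivalence, giving $\Seg(\Omegacleq{1}) \simeq \Seg(\Delta)$. Third, combining this with the first assertion shows that the equivalence further restricts to $\Segcpl(\Omegacleq{1}) \simeq \Segcpl(\Delta)$, and the identification $\Segcpl(\Delta) \simeq \cat$ is Rezk's theorem, cited just above the statement.

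There is no real obstacle here; the only piece requiring a glance is the matching of elementary subcategories in step two, but this is a direct unraveling of \cref{def:Omega-el} in the special case of linear trees, where the only sub-corollas that arise are $C_0 = \eta$ and $C_1$, corresponding to $[0]$ and $[1]$ in $\Delta$.
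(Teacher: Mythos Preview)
Your approach is correct and matches the paper's intended reasoning; the observation carries no proof precisely because both claims are immediate from the cited definitions and \cref{obs:k-1-iso}, exactly as you outline. One small slip: $C_0 \neq \eta$ in the paper's conventions (rather $\eta^c = C_0 = \CC{0}$), and for a closed linear tree $T = r([n])$ the category $(\Omega^{\el})_{/T}$ contains an extra $C_0$ at the maximal edge in addition to the $\eta$'s and $C_1$'s; however, since the closed Segal condition evaluates $\cF$ on closures and the map $\eta_n \to C_0$ becomes an identity after applying $\cF((-)^c)$, this extra object is absorbed in the limit and your identification with the simplicial Segal condition goes through unchanged.
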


\begin{observation}\label{obs:preserves-complete}
    The functors $$i^*_k \colon \Seg(\Omegac_{\leq j}) \to \Seg(\Omegacleqk), \quad 
    (-)^{c,*} \colon \Seg(\Omega^c) \to \Seg(\Omega)$$ both preserve and detect complete objects. 
    We will denote the functors on complete Segal presheaves by the same symbols.
\end{observation}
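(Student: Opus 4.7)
The plan is to reduce both claims to a direct compatibility between the pullback functor in question and the functor $r$ through which completeness is defined. Recall from \cref{def:complete-segal-presh} that a Segal presheaf $\cF$ on any of $\cC \in \{\Omega, \Omegac, \Omegacleq{k}\}$ is complete precisely when the Segal space $r^*\cF \in \Seg(\Delta)$ is Rezk-complete. So, given any functor $F \colon \cC_2 \to \cC_1$ between such categories satisfying $F \circ r_{\cC_2} \simeq r_{\cC_1}$ as functors $\Delta \to \cC_1$, functoriality of pullback gives $r_{\cC_2}^* \circ F^* \simeq r_{\cC_1}^*$ as functors $\cP(\cC_1) \to \cP(\Delta)$. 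Consequently, for $\cF \in \Seg(\cC_1)$, the Segal spaces $r_{\cC_2}^*(F^*\cF)$ and $r_{\cC_1}^*\cF$ are canonically equivalent, so $F^*\cF$ is complete iff $\cF$ is complete; this is exactly the statement that $F^*$ preserves and detects complete objects.

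I will then verify this triangle condition in each case. For $i_k \colon \Omegacleq{k} \hookrightarrow \Omegac_{\leq j}$, the abuse-of-notation convention adopted just before \cref{obs:k-1-iso} defines $r \colon \Delta \to \Omegac_{\leq j}$ as the factorization, through $\Omegacleq{k}$, of the composite $\Delta \to \Omega \xrightarrow{(-)^c} \Omegac$. This factorization is legitimate because the closure $([n],\{n\})^c$ is a linear closed tree, each of whose edges has valence at most $1 \leq k$; hence the image of $r \colon \Delta \to \Omegac$ lies in $\Omegacleq{k}$ for every $k \geq 1$. By construction $i_k \circ r_{\Omegacleq{k}} = r_{\Omegac_{\leq j}}$ on the nose, so the principle above applies.

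For $(-)^{c,*} \colon \Seg(\Omegac) \to \Seg(\Omega)$, the same notational convention defines $r \colon \Delta \to \Omegac$ as the composite $(-)^c \circ r_\Omega$, so the relevant triangle commutes tautologically and the principle again yields preservation and detection of completeness. I expect no serious obstacle: the entire content of the observation is bookkeeping about how $r$ has been set up in each tree category, together with the already-established facts (\cref{obs:k-restriction-preserves-segal} and \cref{prop:uni-prop-for-segal-on-Omegac}) that $i_k^*$ and $(-)^{c,*}$ do land in the subcategories of Segal presheaves.
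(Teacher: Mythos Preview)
Your proposal is correct and is precisely the reasoning the paper leaves implicit: the statement is recorded as an observation without proof because completeness is defined through $r^*$, and the functors $i_k$ and $(-)^c$ are set up so that $r$ commutes with them on the nose, making preservation and detection of completeness tautological. Your write-up simply spells out that bookkeeping.
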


\subsection{$k$-restricted unital $\infty$-operads}
By the works of \cite{Barwick_2018} 
and \cite{two-models}, the $\infty$-category $\cPseg(\Omega)$
of complete Segal presheaves on $\Omega$ is equivalent to the $\infty$-category $\Op$ of $\infty$-operads:
\begin{theorem}[{\cite[Thm.~1.1]{two-models}, \cite[Thm.~10.16]{Barwick_2018}}]\label{thm:op-equivalent-segcpl}
    We have an equivalence of $\infty$-categories $$\Segcpl(\Omega) \simeq \Op.$$ Under this equivalence, the  edge $\eta$ corresponds to the $\infty$-operad $\Triv$, and the $n$-corolla $C_n$ corresponds to $\Free_{\Op}(C_n)$, viewed as an $\infty$-operad.
    
    Let $\cO$ be an $\infty$-operad and $\cF_{\cO}$ be the corresponding complete Segal presheaf; the space of colors $\underline{O}^\simeq$ is isomorphic to $\cF_{\cO}(\eta)$. Furthermore, given colors $X_1, \ldots, X_n, Y \in \cO$, we have an equivalence
    \begin{equation}
        \Mul_{\cO}(X_1, \ldots, X_n; Y) \simeq \cF_{\cO}(C_k) \times_{\cF_{\cO}(\eta)^{\times (n+1)}}(X_1, \ldots, X_n, Y).
    \end{equation}
    Here we view $(X_1, \ldots, X_n, Y)$ as a point in $\cF_{\cO}(\eta)^{\times (n+1)}$ by the isomorphism $\underline{O}^\simeq \simeq \cF_{\cO}(\eta)$.
\end{theorem}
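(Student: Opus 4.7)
The equivalence $\Segcpl(\Omega) \simeq \Op$ itself is the content of the cited works \cite{two-models, Barwick_2018}, so I would not attempt to reprove it. My plan is to invoke this equivalence directly and then verify the explicit identifications of representables, colors, and multi-morphism spaces by a Yoneda argument.

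First, I would recall that the equivalence is compatible by construction with the dendroidal nerve/realization, which sends a tree $T \in \Omega$ to the free $\infty$-operad $\Free_{\Op}(T)$. Both the Chu--Haugseng--Heuts and Barwick approaches construct the equivalence by extending this assignment on representables in a colimit-preserving way: a (complete Segal) presheaf is the colimit of the representables lying over it, and the corresponding $\infty$-operad is the analogous colimit of free operads. In particular the representable at $T$ is sent to $\Free_{\Op}(T)$; specializing, $\eta \mapsto \Free_{\Op}(\eta) = \Triv$ and $C_n \mapsto \Free_{\Op}(C_n)$.

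Next, I would combine this with the Yoneda lemma. For any $\cO \in \Op$ with associated complete Segal presheaf $\cF_{\cO}$, we obtain
\begin{equation}
  \cF_{\cO}(T) \simeq \Map_{\cP(\Omega)}(y(T), \cF_{\cO}) \simeq \Map_{\Op}(\Free_{\Op}(T), \cO).
\end{equation}
Taking $T = \eta$: maps $\Triv \to \cO$ in $\Op$ are precisely the colors of $\cO$, so $\cF_{\cO}(\eta) \simeq \underline{\cO}^{\simeq}$. Taking $T = C_n$: by the universal property of $\Free_{\Op}$, maps $\Free_{\Op}(C_n) \to \cO$ correspond to the data of $n+1$ colors $X_1, \ldots, X_n, Y$ together with a multi-morphism $X_1, \ldots, X_n \to Y$. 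The $n+1$ edge inclusions $\eta \hookrightarrow C_n$ induce the map $\cF_{\cO}(C_n) \to \cF_{\cO}(\eta)^{n+1}$, and the fiber over $(X_1, \ldots, X_n, Y)$ is precisely $\Mul_{\cO}(X_1, \ldots, X_n; Y)$, as claimed.

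The only nontrivial step is verifying that the cited equivalence really does match representables with free operads on trees. This is built into both constructions (each model comes with a canonical nerve/realization adjunction with $\Op$ through which the equivalence is exhibited), so in a written proof I would simply cite the relevant assertions from \cite{two-models, Barwick_2018} rather than verify them by hand; the main obstacle, were one to demand a self-contained argument, would really be reconstructing those identifications carefully within whichever of the two models one chooses to work in.
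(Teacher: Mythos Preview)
Your proposal is correct and matches the paper's treatment: the paper does not give its own proof of this theorem but simply cites \cite{two-models, Barwick_2018} in the theorem header and moves on. Your additional Yoneda sketch for the identifications of $\cF_{\cO}(\eta)$ and $\cF_{\cO}(C_n)$ is sound supplementary detail that the paper leaves implicit.
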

Now we relate complete Segal presheaves on $\Omegac$ to unital $\infty$-operads.
\begin{definition}\label{def:unital-infty-operad}
   An $\infty$-operad $\cO$ is \emph{unital} if for every color $X \in \cO$, the space $\Mul_{\cO}(\varnothing; X)$ is contractible. 
\end{definition}
We denote by $\Opun$ the full subcategory of $\Op$ consisting of unital $\infty$-operads.
\begin{corollary}\label{cor:opun-eq-Omegac}
    The equivalence $\Op \simeq  \Segcpl(\Omega)$ restricts to an equivalence of full subcategories $\Opun \simeq \Segcpl(\Omegac)$.
\end{corollary}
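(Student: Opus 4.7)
The plan is to combine \cref{prop:uni-prop-for-segal-on-Omegac}, \cref{obs:preserves-complete}, and \cref{thm:op-equivalent-segcpl}, reducing the statement to an identification of the unitality condition with the equivalence $\cF(\CC{0}) \to \cF(\eta)$ on the corresponding complete Segal presheaf.

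First I would note that by \cref{prop:uni-prop-for-segal-on-Omegac}\eqref{enum-prop:2} and \cref{obs:preserves-complete}, the pullback $(-)^{c,*} \colon \Segcpl(\Omegac) \to \Segcpl(\Omega)$ is fully faithful, with essential image given (by part \eqref{enum-prop:3} of the same proposition, restricted to complete objects) by those $\cF \in \Segcpl(\Omega)$ for which the map $\cF(\CC{0}) \to \cF(\eta)$ induced by $\mu \colon \eta \to \CC{0}$ is an equivalence. Combining with the equivalence $\Segcpl(\Omega) \simeq \Op$ of \cref{thm:op-equivalent-segcpl}, it therefore suffices to show that, under this equivalence, the condition ``$\cF_{\cO}(\CC{0}) \to \cF_{\cO}(\eta)$ is an equivalence'' corresponds exactly to unitality of $\cO$.

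For the identification, \cref{thm:op-equivalent-segcpl} gives $\cF_{\cO}(\eta) \simeq \underline{\cO}^{\simeq}$, while $\CC{0} = C_0$ is the $0$-corolla, which corresponds under the equivalence to the free $\infty$-operad $\Free_{\Op}(C_0)$ on a single nullary operation. Thus $\cF_{\cO}(\CC{0})$ is the total space of pairs $(X, u)$ consisting of a color $X \in \cO$ together with a nullary operation $u \in \Mul_{\cO}(\varnothing; X)$; more precisely, using the Segal condition applied to $C_0$ analogously to the $C_n$ case in \cref{thm:op-equivalent-segcpl}, we have an equivalence over $\cF_{\cO}(\eta) \simeq \underline{\cO}^{\simeq}$ whose fiber over $X$ is $\Mul_{\cO}(\varnothing; X)$. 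The structure map $\cF_{\cO}(\CC{0}) \to \cF_{\cO}(\eta)$ is then the forgetful map $(X,u) \mapsto X$, which is an equivalence of spaces if and only if each fiber $\Mul_{\cO}(\varnothing; X)$ is contractible, i.e., if and only if $\cO$ is unital in the sense of \cref{def:unital-infty-operad}.

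No step is especially delicate: the main input is \cref{prop:uni-prop-for-segal-on-Omegac}, and the only content left is unwinding what $C_0$ represents under the equivalence of \cref{thm:op-equivalent-segcpl}. If anything, I would expect the minor subtlety to be in spelling out that a map $\Free_{\Op}(C_0) \to \cO$ in $\Op$ really does classify a pair $(X, u)$ as claimed, but this follows from the fact that $\Free_{\Op}(C_0)$ has one color and one nullary operation into it, together with the universal property of the free $\infty$-operad.
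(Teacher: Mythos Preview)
Your proposal is correct and follows essentially the same approach as the paper's proof, which simply asserts that under the equivalence of \cref{thm:op-equivalent-segcpl} an $\infty$-operad $\cO$ is unital if and only if $\cF_{\cO}(\CC{0}) \to \cF_{\cO}(\eta)$ is an equivalence, and then invokes \cref{prop:uni-prop-for-segal-on-Omegac}. You spell out more carefully both the passage to complete objects via \cref{obs:preserves-complete} and the identification of the fibers of $\cF_{\cO}(\CC{0}) \to \cF_{\cO}(\eta)$ with $\Mul_{\cO}(\varnothing;X)$, but the strategy is the same.
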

\begin{proof}
    Under the equivalence in \cref{thm:op-equivalent-segcpl}, an $\infty$-operad $\cO$ is unital if and only if 
$\cF_{\cO}(\CC{0}) \to \cF_{\cO}(\eta)$ is an equivalence. Now, the result follows from \cref{prop:uni-prop-for-segal-on-Omegac}.
\end{proof}

Motivated by \cref{cor:opun-eq-Omegac}, we have the following definition for $k$-restricted unital $\infty$-operad:
\begin{definition}\label{def:k-restristed-unital-operad}
    A $k$-restricted unital $\infty$-operad is a complete Segal presheaf on $\Omegacleqk$.
\end{definition}

\begin{notation}\label{nota:Opunleqk}
    From now on we will use $\Opunleqk$ to denote the $\infty$-category $\Segcpl(\Omegacleqk)$ and $(-)^{k}$ to denote the functor 
    \begin{equation}\label{eq:k-restriction-functor}
    (-)^{k} \colon   \Opunleq{j} = \Segcpl(\Omegacleq{j}) \xrightarrow{i_k^*} \Segcpl(\Omegacleqk) = \Opunleqk.
    \end{equation}
    Let $\cF$ be a unital $k$-restricted $\infty$-operad. We refer to $\cF(\CC{0})$ as its space of colors. Additionally, given $1 \leq n \leq k$ and colors $X_1, \cdots, X_n, Y$ in $\cF$, we denote by $\Mul_{\cF}(X_1, \cdots, X_n; Y)$ the fiber product $\cF(\CC{n}) \times_{\cF(\CC{0})^{\times (n+1)}} \{(X_1, \cdots, X_n, Y)\}$.
\end{notation}
By \cref{obs:Ck-check-equivalence}, we have the following observation:
\begin{observation}\label{obs:k-restricted-op-eq}
    Let  $f \colon \cF_1 \to \cF_2$ be a map of unital $k$-restricted $\infty$-operads. The following are equivalent:
    \begin{enumerate}
        \item $f$ is an equivalence.
        \item For any $0 \leq n \leq k$, the induced map 
        \begin{equation}
            \cF_1(\CC{n}) \to \cF_2(\CC{n})
        \end{equation}
        is an equivalence.
        \item The induced map on colors $$\cF_1(\CC{0}) \to \cF_2(\CC{0})$$ is an equivalence; furthermore, for any $1 \leq n \leq k$ and colors $X_1, \cdots, X_n, Y$ in $\cF_1$, the induced map  
        \begin{equation}
            \Mul_{\cF}(X_1, \cdots, X_n; Y) \to \Mul_{\cF'}(f(X_1), \cdots, f(X_n); f(Y))
        \end{equation}
        is an equivalence.
    \end{enumerate}
\end{observation}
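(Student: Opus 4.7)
The plan is to derive both equivalences from two ingredients already in hand: the Segal-detection criterion of \cref{obs:Ck-check-equivalence}, and the elementary fact that a map of spaces sitting over a common base is an equivalence if and only if it is an equivalence on every fiber. Since $\Opunleqk$ is defined as the full subcategory $\Segcpl(\Omegacleqk) \subset \Seg(\Omegacleqk)$, a morphism $f$ is an equivalence in $\Opunleqk$ precisely when it is one in $\Seg(\Omegacleqk)$.

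For $(1) \Leftrightarrow (2)$, one simply invokes \cref{obs:Ck-check-equivalence}: a map of Segal presheaves on $\Omegacleqk$ is an equivalence exactly when it induces equivalences on the closed corollas $\CC{n}$ for every $0 \leq n \leq k$.

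For $(2) \Leftrightarrow (3)$, the key is the definition in \cref{nota:Opunleqk} identifying $\Mul_{\cF}(X_1,\ldots,X_n;Y)$ with the fiber of the projection $\cF(\CC{n}) \to \cF(\CC{0})^{\times(n+1)}$ over the point $(X_1,\ldots,X_n,Y)$. For each $1 \leq n \leq k$, the natural strategy is to contemplate the commutative square
\begin{equation*}
\begin{tikzcd}
\cF_1(\CC{n}) \ar[r, "f(\CC{n})"] \ar[d] & \cF_2(\CC{n}) \ar[d] \\
\cF_1(\CC{0})^{\times(n+1)} \ar[r, "f(\CC{0})^{\times(n+1)}"'] & \cF_2(\CC{0})^{\times(n+1)}.
\end{tikzcd}
\end{equation*}
Assuming $(2)$, the $n = 0$ case yields the equivalence on colors, and for $n \geq 1$ both horizontal arrows are equivalences, so passing to fibers over any point $(X_1,\ldots,X_n,Y)$ delivers the equivalence on $\Mul$-spaces. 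Conversely, assuming $(3)$, the bottom arrow is a product of color equivalences, hence an equivalence; the top arrow $f(\CC{n})$ is then a map over the common base $\cF_2(\CC{0})^{\times(n+1)}$ that is an equivalence on every fiber by hypothesis, so it is an equivalence, establishing $(2)$.

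No real obstacle is anticipated: the statement is a formal packaging of the Segal condition with the fiber-wise equivalence detection principle. The only mild point requiring care in $(3) \Rightarrow (2)$ is verifying that $f(\CC{n})$ can legitimately be compared over the common base $\cF_2(\CC{0})^{\times(n+1)}$ after transporting via the color equivalence $f(\CC{0})$, but this is immediate from the commutativity of the square above.
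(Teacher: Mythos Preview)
Your proposal is correct and matches the paper's approach: the paper states this as an observation deduced directly from \cref{obs:Ck-check-equivalence} without further proof, and your argument simply spells out that deduction together with the standard fiberwise equivalence criterion for the $(2)\Leftrightarrow(3)$ part.
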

Pulling back  complete Segal presheaves along $r \colon \Delta \to \Omegacleqk$ induces a functor 
\begin{equation}
    r^* \colon \Opunleq{k}  = \Segcpl(\Omegacleqk) \to \Segcpl(\Delta) \simeq \cat.
\end{equation}
It takes a unital $k$-restricted $\infty$-operad 
to its underlying $\infty$-category. 

\begin{example}\label{ex:k-1-part-I}
    Let $k = 1$. By \cref{obs:k-1-iso-part-II}, the functor $r^* \colon \Opunleq{1} \to \cat$ is an equivalence. 
    The inverse takes an $\infty$-category $\cC$ to the $1$-restricted $\infty$-operad whose underlying $\infty$-category is $\cC$ and has a unit to each object of $\cC$.
\end{example}

\section{The left adjoint $\Lk$}\label{sec:left-adjoint}
Given $1 \leq k \leq j \leq \infty$, 
we have an adjunction 
\begin{equation}\label{eq:LKan-k-adjunction}
    \begin{tikzcd}[column sep=1.5cm]
\LKan_k \colon \cP(\Omegacleqk) 
\arrow[hookrightarrow, yshift=0.9ex]{r}{}
\arrow[leftarrow, yshift=-0.9ex]{r}[yshift=-0.15ex]{\bot}[swap]{}
&
\cP(\Omegacleq{j}) \colon i_k^*
\end{tikzcd}
\end{equation}
where the fully faithful left adjoint $\LKan_k$ is the left Kan extension along the inclusion $i_k \colon \Omegacleqk \hookrightarrow \Omegacleq{j}$. In this section, we show that the $\LKan_k$ preserves complete Segal objects, thus restricting to a fully faithful left adjoint
\begin{equation}
    \Lk \colon \Opunleq{k} \to \Opunleq{j}
\end{equation}
to the restriction functor 
$(-)^k \colon \Opunleq{j} \to \Opunleq{k}$ defined in \cref{nota:Opunleqk}.
\subsection{Maps to $k$-dendroidal trees}
In this subsection, we established some technical results needed for the left adjoint statement.
First, we have to introduce some notations:
\begin{notation}\label{nota:over-under-cat}
  Given a functor $F \colon \cC \to \cD$ between $\infty$-categories and $d \in \cD$, we denote by $\cC_{d/}$ the fiber product $\cC \times_{\cD} \cD_{d/}$, where $\cD_{d/}$ is the under-category of $d$. Similarly we denote by $\cC_{/d}$ the fiber product $\cC \times_{\cD} \cD_{/d}$, where $\cD_{/d}$ is the over-category of $d$.
\end{notation}

Recall that a functor of $\infty$-categories $F \colon \cC \to \cD$ is \emph{coinitial} if for every $d \in \cD$ the $\infty$-category $\cC_{/d}$ is weakly contractible.\footnote{A morphism is coinitial if and only if $F^\op \colon \cC^\op \to \cD^\op$ is cofinal in the sense of \cite[\S4.1.1]{HTT}. In \cite{HA}, it is referred to as \emph{right cofinal}.}
Note that left adjoints are coinitial: 
if $F$ is a left adjoint, then for every $d$ in $\cD$ the $\infty$-category $\cC_{/d}$ is weakly contractible as it has a final object.

\begin{definition}\label{def:Omegaleqkrmax}
    Given a closed tree $T$ and $k \geq 1$.
    Let $(\Omegacleqk)_{T/^{\rmax}}$ denote the full subcategory of $(\Omegacleqk)_{T/}$ whose objects are rooted max-surjective morphisms (see \cref{nota:morphisms-in-omegac})  from $T$ to closed $k$-dendroidal trees.
\end{definition}
For $k = \infty$ we simply denote $(\Omegacleq{\infty})_{T/^{\rmax}}$ as $(\Omegac)_{T/^{\rmax}}$.
\begin{lemma}\label{lem:left-adjoint-1}
    Given $k \geq 1$ and a closed tree $T$.
    We have an adjunction 
    \begin{equation}\label{eq:j-to-k-restricted-rmax-adjunction}
    \begin{tikzcd}[column sep=1.5cm]
(\Omegacleqk)_{T/^{\rmax}}
\arrow[hookrightarrow, yshift=0.9ex]{r}{}
\arrow[leftarrow, yshift=-0.9ex]{r}[yshift=-0.15ex]{\bot}[swap]{}
&
(\Omegacleqk)_{T/} \colon \Fact(-),
\end{tikzcd}
    \end{equation}
where the left adjoint is the canonical inclusion, and the right adjoint is given by taking the factorization with 
    respect to the (rooted max-surjective, subtree inclusion) factorization system on $\Omegac$ constructed in \cref{prop:fact-sys-on-Omegac}\eqref{item:fact-1}.
\end{lemma}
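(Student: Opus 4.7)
The plan is to verify the adjunction directly by constructing a natural bijection on $\Hom$-sets, using the fact that the inclusion $(\Omegacleqk)_{T/^{\rmax}} \hookrightarrow (\Omegacleqk)_{T/}$ is fully faithful. Since these are ordinary $1$-categories, it suffices to show that for each object $(g\colon T \to S)$ of $(\Omegacleqk)_{T/}$, the assignment $\Fact(g)$ gives a well-defined object of $(\Omegacleqk)_{T/^{\rmax}}$ with the expected universal property.

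First I would verify that $\Fact$ is well-defined. Given $g\colon T \to S$ with $S \in \Omegacleqk$, applying the (rooted max-surjective, subtree inclusion) factorization system on $\Omegac$ from \cref{prop:fact-sys-on-Omegac} yields a factorization $T \xrightarrow{g'} S' \xrightarrow{\iota} S$ in $\Omegac$, where $g'$ is rooted max-surjective and $\iota$ is a subtree inclusion. The crucial point is that $S'$ lies in $\Omegacleqk$: by \cref{lem:k-closure}\eqref{item-enum:closure-1}, every subtree of a $k$-dendroidal tree is $k$-dendroidal, so $S' \in \Omegacleqk$. Thus $g'$ determines an object $\Fact(g)$ of $(\Omegacleqk)_{T/^{\rmax}}$.

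Next I would establish the adjunction bijection. Fix $(f\colon T \to X)$ in $(\Omegacleqk)_{T/^{\rmax}}$ and $(g\colon T \to S)$ in $(\Omegacleqk)_{T/}$. Given a morphism $h\colon X \to S$ with $h\circ f = g$, apply the same factorization system to $h$ to obtain $h = \iota' \circ h'$ with $h'\colon X \to X'$ rooted max-surjective and $\iota'\colon X' \to S$ a subtree inclusion; again $X' \in \Omegacleqk$ by \cref{lem:k-closure}\eqref{item-enum:closure-1}. Since $f$ and $h'$ are both rooted max-surjective, so is $h' \circ f$, and $\iota' \circ (h' \circ f) = g$ exhibits another (rooted max-surjective, subtree inclusion) factorization of $g$. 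Uniqueness of such factorizations identifies $(h' \circ f, \iota')$ with $(g', \iota)$, so $X' = S'$, $h' \circ f = g'$, and $h'$ defines a morphism $f \to \Fact(g)$ in $(\Omegacleqk)_{T/^{\rmax}}$. Conversely, a morphism $h'\colon X \to S'$ under $T$ with $h' \circ f = g'$ yields $h \coloneqq \iota \circ h'\colon X \to S$ with $h \circ f = g$. These two constructions are mutually inverse and natural in $(f, g)$.

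The main obstacle is ensuring that the intermediate tree of the factorization stays within $\Omegacleqk$, which is precisely the content of \cref{lem:k-closure}\eqref{item-enum:closure-1}; once this closure property is in hand, the rest is the standard observation that a factorization system $(\mathcal{L}, \mathcal{R})$ on a category $\mathcal{C}$ makes the $\mathcal{L}$-component of an arrow into the right adjoint to the inclusion of the $\mathcal{L}$-arrows from $T$ into $\mathcal{C}_{T/}$. Functoriality of $\Fact$ and naturality of the bijection follow formally from the same uniqueness of factorizations.
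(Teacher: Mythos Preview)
Your proposal is correct and follows the same approach as the paper: the paper's proof simply notes that the (rooted max-surjective, subtree inclusion) factorization system on $\Omegac$ already yields the adjunction $(\Omegac)_{T/^{\rmax}} \rightleftarrows (\Omegac)_{T/}$, and then invokes \cref{lem:k-closure}\eqref{item-enum:closure-1} to see that the intermediate tree of the factorization is $k$-dendroidal, so the adjunction restricts to $\Omegacleqk$. You spell out the same argument in more detail by exhibiting the $\Hom$-bijection explicitly, but the substance is identical.
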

\begin{proof}
    The (rooted max-surjective, subtree inclusion) factorization system on $\Omegac$ gives an adjunction 
    \begin{equation}
      \begin{tikzcd}[column sep=1.5cm]
        (\Omegac)_{T/^{\rmax}}
        \arrow[hookrightarrow, yshift=0.9ex]{r}{}
        \arrow[leftarrow, yshift=-0.9ex]{r}[yshift=-0.15ex]{\bot}[swap]{}
        &
        (\Omegac)_{T/} \colon \Fact(-).
        \end{tikzcd}  
    \end{equation}
    By \cref{lem:k-closure}\eqref{item-enum:closure-1}, this restricts to the desired adjunction.
\end{proof}

Suppose we have a map of closed trees $f \colon T \to T'$. Consider the composite 
\begin{equation}
    f^{\rmax} \coloneqq 
(\Omegacleqk)_{T'/^{\rmax}}
\hookrightarrow
(\Omegacleqk)_{T'/}
\xrightarrow{f^*} 
(\Omegacleqk)_{T/}
\xrightarrow{\Fact(-)}
(\Omegacleqk)_{T/^{\rmax}}.
\end{equation}
Explicitly, this takes a rooted max-surjective map $(g' \colon T' \to X')$ to 
$(\Fact(g' \circ f) \colon T \to X)$, where $X$ is the (rooted max-surjective, subtree inclusion) factorization of the composite $T \xrightarrow{f} T \xrightarrow{g'} X'$.

Now we study how $(\Omegacleqk)_{T/^{\rmax}}$ behave with respect to cutting a tree along an edge $e$. Fix a closed tree $T$ and $e$ an internal edge of $T$. We have closed subtrees $T_{\geq e}$ and $\overline{T^e}$ from \cref{nota:upper-and-lower-subset}.
Let $\inc_e \colon T_{\geq e} \to T$ and $\inc^e \colon \overline{T^e} \to T$ be the inclusion maps.
Suppose we have a max-surjective $g \colon T \to X$ taking $e$ to $x$ in $X$. Then $\inc_e(g \colon T \to X)$ is $(g_e \colon T_{\geq e} \to X_{\geq x})$ and $\inc^e(g \colon T \to X)$ is $(g^e \colon \overline{T^e} \to \overline{X^x})$. 

This invites an inverse construction:
given two rooted max-surjective maps $g_e \colon T_{\geq e} \to X_1$ and $g^e \colon T^e \to X_2$, we can build a tree
$X_1 \sqcup_{e} X_2$ by gluing the root of $X_1$ to the outer edge $x \coloneqq g^e(e)$ of $X_2$. Furthermore, the maps $g_e$ and $g^e$ induce a rooted max-surjective map $g \colon T \to X_1 \sqcup_e X_2$. 
Note that $X_1 \sqcup_{e} X_2$ is $k$-dendroidal if  $X_1$ and $X_2$ are $k$-dendroidal.
This defines a functor 
\begin{equation}
    \glue_e \colon (\Omegacleqk)_{T_{\geq e}/^{\rmax}} \times (\Omegacleqk)_{\overline{T^e}/^{\rmax}} \to  (\Omegacleqk)_{T/^{\rmax}}.
\end{equation}

The following lemma is clear from the construction:
\begin{lemma}\label{lem:left-kan-cutting-lem}
    Given $k \geq 1$, a closed tree $T$ and an internal edge $e$ of $T$. The functor
    \begin{equation}
        (\inc_e^{\rmax}, \inc^{e,\rmax}) \colon 
        (\Omegacleqk)_{T/^{\rmax}} \to (\Omegacleqk)_{T_{\geq e}/^{\rmax}} \times (\Omegacleqk)_{\overline{T^e}/^{\rmax}}
    \end{equation}
    is an equivalence with the inverse given by $\glue_e$.
\end{lemma}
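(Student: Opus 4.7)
The plan is to directly verify that $(\inc_e^{\rmax}, \inc^{e, \rmax})$ and $\glue_e$ are mutually inverse functors of ordinary categories. The key geometric observation is that any closed tree $X$ with a distinguished non-root edge $x$ admits a canonical decomposition $X \simeq X_{\geq x} \sqcup_x \overline{X^x}$, obtained by gluing the upper subtree to the closure of the lower subtree along $x$; this decomposition is functorial in rooted max-surjective maps out of $T$ sending $e$ to $x$.

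First I would verify well-definedness. Given a rooted max-surjective $g \colon T \to X$ with $x := g(e)$, the restriction $g_e \colon T_{\geq e} \to X_{\geq x}$ is rooted by construction, and is max-surjective for the following reason: every maximal edge of $X_{\geq x}$ is maximal in $X$, and hence by \cref{obs:max-surj} is the image $g(t)$ of some maximal $t \in T$; since $g$ preserves independence and $g(t) \geq g(e)$, the edge $t$ cannot be independent of $e$ or strictly below $e$, so it must lie in $T_{\geq e}$. The parallel argument handles $g^e \colon \overline{T^e} \to \overline{X^x}$, and $k$-dendroidality of the targets follows from \cref{lem:k-closure}\eqref{item-enum:closure-2}.

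For the round-trip identities on objects, the decomposition $X = X_{\geq x} \cup \overline{X^x}$ (with intersection $\{x\}$) together with the fact that $g$ is determined by the pair of restrictions $(g_e, g^e)$ yields $\glue_e \circ (\inc_e^{\rmax}, \inc^{e, \rmax}) \simeq \id$; the other composite is immediate from the construction of $\glue_e$. On morphisms, a map $f \colon X \to X'$ in $(\Omegacleqk)_{T/^{\rmax}}$ satisfies $f(x) = f(g(e)) = g'(e) = x'$, so $f$ restricts compatibly to a pair of maps $X_{\geq x} \to X'_{\geq x'}$ and $\overline{X^x} \to \overline{X'^{x'}}$; conversely any such compatible pair glues uniquely to an $f$. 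The resulting bijections on Hom-sets complete the proof.

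The only technical step requiring care is the max-surjectivity of the restricted maps, which relies essentially on the independence-preservation property of morphisms in $\Omegac$ from \cref{obs:closed-trees-description}; everything else follows formally from the canonical cut-and-paste decomposition of a closed tree at a non-root edge.
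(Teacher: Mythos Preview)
Your argument is correct and is exactly the verification the paper has in mind when it declares the lemma ``clear from the construction''; you simply supply the details the paper omits. One small slip: the $k$-dendroidality of $X_{\geq x}$ and $\overline{X^x}$ follows from part~\eqref{item-enum:closure-1} of \cref{lem:k-closure} (they are subtrees of the $k$-dendroidal tree $X$), not part~\eqref{item-enum:closure-2}, since the ambient tree $T$ is not assumed to be $k$-dendroidal.
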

\subsection{Left adjoint $\Lk$}
Throughout this subsection, we fix $1 \leq k \leq j \leq \infty$.
\begin{proposition}\label{prop:LK-complete-Segal}
    The left Kan extension $\LKan_k \colon \cP(\Omegacleqk) \to \cP(\Omegacleq{j})$ preserves complete Segal presheaves.
\end{proposition}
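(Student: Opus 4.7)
The plan is to verify that $\LKan_k \cF$ is both Segal and complete, where $\cF$ is a complete Segal presheaf on $\Omegacleqk$. I first rewrite the pointwise formula
\begin{equation*}
    (\LKan_k \cF)(T) \simeq \colim_{(X, T \to X) \in ((\Omegacleqk)_{T/})^\op} \cF(X)
\end{equation*}
over a more tractable indexing category. The adjunction $L \dashv \Fact$ from \cref{lem:left-adjoint-1}, where $L$ is the inclusion of $(\Omegacleqk)_{T/^{\rmax}}$, yields $\Fact^\op \dashv L^\op$ on opposite categories, making $L^\op$ a right adjoint and hence cofinal. Thus
\begin{equation*}
    (\LKan_k \cF)(T) \simeq \colim_{(Y, T \to Y) \in ((\Omegacleqk)_{T/^{\rmax}})^\op} \cF(Y).
\end{equation*}

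For the Segal condition on $\Omegacleq{j}$, I apply the cutting criterion \cref{prop:closed-cutting-Segal}\eqref{item:closed-cutting-2}. Fix a closed $j$-dendroidal tree $T$ and an inner edge $e$. By \cref{lem:left-kan-cutting-lem}, the gluing functor gives an equivalence
\begin{equation*}
    (\Omegacleqk)_{T_{\geq e}/^{\rmax}} \times (\Omegacleqk)_{\overline{T^e}/^{\rmax}} \xrightarrow{\sim} (\Omegacleqk)_{T/^{\rmax}}, \quad (X_1, X_2) \mapsto X_1 \sqcup_e X_2,
\end{equation*}
where the glued edge $x$ becomes an internal edge of the $k$-dendroidal tree $X_1 \sqcup_e X_2$. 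Applying the Segal condition for $\cF$ at $x$ (noting $\overline{x} \simeq \CC{0}$) yields $\cF(X_1 \sqcup_e X_2) \simeq \cF(X_1) \times_{\cF(\CC{0})} \cF(X_2)$. Invoking universality of colimits in $\Spaces$ to commute the pullback past the product-indexed colimit, and using fully faithfulness of $\LKan_k$ to identify $(\LKan_k \cF)(\CC{0}) \simeq \cF(\CC{0})$, I obtain
\begin{equation*}
    (\LKan_k \cF)(T) \simeq (\LKan_k \cF)(T_{\geq e}) \times_{(\LKan_k \cF)(\overline{e})} (\LKan_k \cF)(\overline{T^e}),
\end{equation*}
which is the desired Segal condition.

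For completeness, the key observation is that $r \colon \Delta \to \Omegacleq{j}$ factors through $\Omegacleqk$, since each $r([n])$ is a linear tree with all valences $\leq 1$. Writing $r = i_k \circ r'$, fully faithfulness of $\LKan_k$ gives $i_k^* \LKan_k \cF \simeq \cF$, hence $r^*(\LKan_k \cF) \simeq (r')^* \cF$, which is a complete Segal space by hypothesis.

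The main obstacle will be the Segal step, specifically verifying that the Segal condition for $\cF$ applies cleanly at the new internal edge $x$ after gluing (using that $X_1 \sqcup_e X_2$ remains $k$-dendroidal since the valence at $x$ is that of the root of $X_1$), and that universality of colimits can be invoked to commute the base change $(-) \times_{\cF(\CC{0})} (-)$ past the product-indexed colimit. The remaining ingredients are essentially categorical bookkeeping once the cutting lemma and factorization-based cofinality are in hand.
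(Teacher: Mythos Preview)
Your proposal is correct and follows essentially the same route as the paper: reduce the colimit to $(\Omegacleqk)_{T/^{\rmax}}$ via the factorization adjunction of \cref{lem:left-adjoint-1}, split the indexing category with \cref{lem:left-kan-cutting-lem}, apply the Segal condition for $\cF$ at the glued edge, commute the fiber product past the colimits using universality in $\Spaces$, and handle completeness by factoring $r$ through $\Omegacleqk$. The only cosmetic difference is that the paper writes the base of the fiber product as $\cF(\overline{e})$ rather than $\cF(\CC{0})$, and carries out the universality step in two explicit stages (first over $X_2$, then over $X_1$); your compressed description of this step is fine.
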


\begin{proof}
    We first prove that $\LKan_k$ preserves Segal presheaves.
    Given a Segal presheaf $\cF \in \Seg(\Omegacleqk)$.
    By \cref{prop:closed-cutting-Segal}\eqref{item:closed-cutting-2}, it suffices to show the following:
    for any closed $j$-dendroidal tree $T$ and $e$ an internal edge of $T$,
    the canonical map
     \begin{equation}\label{eq:Lkan-desired-eq}
        \LKan_k\cF(T) \to \LKan_k\cF(T_{\geq e}) \times_{\LKan_k\cF(\overline{e})}  \LKan_k\cF(\overline{T^e})
    \end{equation}
    is an equivalence. 

    By \cref{lem:left-adjoint-1}, we see that the inclusion 
    $(\Omegacleqk)_{T/^{\rmax}} \hookrightarrow (\Omegacleqk)_{T/}$ is coinitial. Thus, the induced map
    \begin{equation}
    \colim_{X \in ((\Omegacleqk)_{T/^{\maxsurj}})^\op} \cF(X) \to   \colim_{X \in ((\Omegacleqk)_{T/})^\op} \cF(X) = \LKan_k\cF(T)
    \end{equation}
    is an equivalence. For brevity, we will omit the $\op$.

    Now we unpack \eqref{eq:Lkan-desired-eq} as a sequence of equivalences.
    \begin{equation}
        \begin{aligned}
            \LKan_k\cF(T) &\simeq  \colim_{X \in (\Omegacleqk)_{T/^{\maxsurj}}}\cF(X)\\ 
       &\simeq \colim_{(X_1, X_2) \in  (\Omegacleqk)_{T_{\geq e}/^{\rmax}} \times (\Omegacleqk)_{\overline{T^e}/^{\rmax}}} \cF(X_1 \sqcup_e X_2) \\[5pt]  
            &\simeq \colim_{(X_1, X_2) \in  (\Omegacleqk)_{T_{\geq e}/^{\rmax}} \times (\Omegacleqk)_{\overline{T^e}/^{\rmax}}} \cF(X_1) \times_{\cF({\overline{e}})} \cF(X_2) \\[5pt] 
            &\simeq\colim_{X_1 \in (\Omegacleqk)_{T_{\geq e}/^{\maxsurj}}} \left(
            \colim_{X_2 \in (\Omegacleqk)_{\overline{T^e}/^{\maxsurj}}} \cF(X_1) \times_{\cF({\overline{e}})} \cF(X_2)
            \right)
            \\[5pt] 
            &\simeq \colim_{X_1 \in (\Omegacleqk)_{T_{\geq e}/^{\maxsurj}}} \left(\cF(X_1) \times_{\cF({\overline{e}})} 
                \colim_{X_2 \in (\Omegacleqk)_{\overline{T^e}/^{\maxsurj}}} \cF(X_2)
                \right) \\[5pt]  
            &\simeq  \left(\colim_{X_1 \in (\Omegacleqk)_{T_{\geq e}/^{\maxsurj}}} \cF(X_1)\right)
             \times_{\cF({\overline{e}})} 
            \left(\colim_{X_2 \in (\Omegacleqk)_{\overline{T^e}/^{\maxsurj}}} \cF(X_2)\right) \\[5pt]  
            &\simeq \LKan_k\cF(T_{\geq e}) \times_{\LKan_k \cF(\bar{e})}
            \LKan_k\cF(\overline{T^e}).
        \end{aligned}
    \end{equation}
    We used \cref{lem:left-kan-cutting-lem} for the second equivalence and \cref{prop:closed-cutting-Segal}\eqref{item:cutting-2} for the third equivalence. Additionally, we used the fact that colimits are universal in $\Spaces$ (see \cite[\S 6.1.1]{HTT}) to commute colimits with fiber products.

    Now it remains to show $\LKan_k$ preserves complete Segal presheaves. This follows from the fact that the map $r \colon \Delta \to \Omegacleq{j}$ factors through $\Omegacleqk$ and that $i_k^*( \LKan_k \cF) \simeq \cF$ for any presheaf $\cF$ on $\Omegacleqk$.
\end{proof}

\begin{definition}\label{def:Lk-def}
    Let $\Lk$ denote the functor
    \begin{equation}
        \Opunleqk = \Segcpl(\Omegacleqk) \xhookrightarrow{\LKan_k} \Segcpl(\Omegacleq{j}) = \Opunleq{j}.
    \end{equation}
\end{definition}
By \cref{prop:LK-complete-Segal}, we have the left adjoint version of our main statement:
\begin{theorem}\label{thm:left-adjoint-theorem}
    The adjunction \eqref{eq:LKan-k-adjunction}
    restricts to an adjunction 
    \begin{equation}\label{eq:Lk-adjunction}
    \begin{tikzcd}[column sep=1.5cm]
\Lk \colon \Opunleqk
\arrow[hookrightarrow, yshift=0.9ex]{r}{}
\arrow[leftarrow, yshift=-0.9ex]{r}[yshift=-0.15ex]{\bot}[swap]{}
&
\Opunleq{j} \colon (-)^k
\end{tikzcd}
    \end{equation}
    Furthermore, the left adjoint $\Lk$ is fully faithful. 
\end{theorem}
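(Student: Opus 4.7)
The plan is to deduce both parts of the theorem from \cref{prop:LK-complete-Segal} together with formal facts about restrictions of adjunctions and left Kan extensions along fully faithful functors. No new combinatorial input on trees is needed; the substantive work has already been done in the preceding proposition.

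First I would verify that the adjunction \eqref{eq:LKan-k-adjunction} restricts to the full subcategories of complete Segal presheaves. By \cref{prop:LK-complete-Segal}, the left Kan extension $\LKan_k$ sends complete Segal presheaves on $\Omegacleqk$ to complete Segal presheaves on $\Omegacleq{j}$, hence restricts to a functor $\Lk \colon \Opunleqk \to \Opunleq{j}$. Dually, \cref{obs:k-restriction-preserves-segal} together with \cref{obs:preserves-complete} shows that $i_k^*$ preserves both the Segal and the completeness conditions, yielding the restriction functor $(-)^k$. Since $\Opunleqk$ and $\Opunleq{j}$ are full subcategories of their respective presheaf $\infty$-categories, and both members of the adjoint pair preserve these subcategories, the presheaf-level adjunction equivalence $\Map(\LKan_k \cF, \cG) \simeq \Map(\cF, i_k^* \cG)$ descends directly to the claimed adjunction \eqref{eq:Lk-adjunction}.

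For the full faithfulness of $\Lk$, I would use that $i_k \colon \Omegacleqk \hookrightarrow \Omegacleq{j}$ is itself fully faithful. A standard consequence, verified on representables via the pointwise formula for left Kan extensions and extended to all presheaves by the fact that $\LKan_k$ preserves colimits and $i_k^*$ preserves them as well, is that the counit $i_k^* \LKan_k \Rightarrow \id$ is an equivalence on all of $\cP(\Omegacleqk)$. Hence $\LKan_k$ is fully faithful as a functor of presheaf categories. Restricting to the full subcategories $\Opunleqk \subset \cP(\Omegacleqk)$ and $\Opunleq{j} \subset \cP(\Omegacleq{j})$ preserves this property, so $\Lk$ remains fully faithful.

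I do not anticipate any genuine obstacle; the only thing to be careful about is the bookkeeping around the preservation statements, which are already isolated in \cref{obs:k-restriction-preserves-segal}, \cref{obs:preserves-complete}, and \cref{prop:LK-complete-Segal}.
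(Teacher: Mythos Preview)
Your approach is correct and coincides with the paper's: the theorem is stated immediately after \cref{prop:LK-complete-Segal} as its direct consequence, with the full faithfulness of $\LKan_k$ at the presheaf level already recorded when introducing \eqref{eq:LKan-k-adjunction}. One small slip to fix: the natural transformation witnessing full faithfulness of $\LKan_k$ is the \emph{unit} $\id \Rightarrow i_k^* \LKan_k$, not a counit, and it points the other way; the mathematical content of your argument is unaffected.
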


We have a colimit description of the multi-ary spaces of $\cF$: 
\begin{corollary}\label{cor:left-adj-description-on-F}
    Let $\cF$ be a unital $k$-restricted $\infty$-operad. Then 
    $\Lk \cF(\CC{n})$ is the colimit 
    \begin{equation}
        \colim_{X \in ((\Omegacleqk)_{\CC{n}/^{\maxsurj}})^\op} \cF(X).
    \end{equation}
\end{corollary}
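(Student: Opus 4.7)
The plan is to specialize the opening of the proof of \cref{prop:LK-complete-Segal} to $T = \CC{n}$, since the corollary just reads off the pointwise left Kan extension formula at the closed $n$-corolla.

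First, by \cref{def:Lk-def} together with \cref{thm:left-adjoint-theorem}, the functor $\Lk$ agrees with the left Kan extension $\LKan_k$ along $i_k \colon \Omegacleqk \hookrightarrow \Omegacleq{j}$, restricted to complete Segal presheaves. The pointwise formula for the left Kan extension of a presheaf therefore yields
\[
\Lk \cF(\CC{n}) \,=\, \LKan_k \cF(\CC{n}) \,\simeq\, \colim_{X \in ((\Omegacleqk)_{\CC{n}/})^\op} \cF(X),
\]
where $(\Omegacleqk)_{\CC{n}/} = \Omegacleqk \times_{\Omegacleq{j}} (\Omegacleq{j})_{\CC{n}/}$ in the notation of \cref{nota:over-under-cat}.

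Next, apply \cref{lem:left-adjoint-1} with $T = \CC{n}$. The lemma provides an adjunction whose left adjoint is the inclusion
\[
(\Omegacleqk)_{\CC{n}/^{\rmax}} \hookrightarrow (\Omegacleqk)_{\CC{n}/},
\]
and any left adjoint is coinitial (its slice categories have terminal objects). Passing to opposite categories turns coinitial into cofinal, and cofinal functors preserve colimits, so the colimit restricts without change to the smaller indexing category:
\[
\Lk \cF(\CC{n}) \,\simeq\, \colim_{X \in ((\Omegacleqk)_{\CC{n}/^{\maxsurj}})^\op} \cF(X),
\]
which is the claim. There is no genuine obstacle in this argument: all the substantive input, namely the (rooted max-surjective, subtree inclusion) factorization system on $\Omegac$ from \cref{prop:fact-sys-on-Omegac} and the closure of $k$-dendroidal trees under max-surjective maps from \cref{lem:k-closure}, has already been packaged into \cref{lem:left-adjoint-1}, and the passage through complete Segal presheaves has been handled by \cref{prop:LK-complete-Segal}.
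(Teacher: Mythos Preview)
Your proposal is correct and is exactly the argument the paper has in mind: the corollary is stated without proof precisely because it is the specialization to $T = \CC{n}$ of the coinitiality step at the start of the proof of \cref{prop:LK-complete-Segal}, which you have reproduced faithfully.
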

Intuitively, the space of $n$-ary morphisms of $\Lk \cF$ is the space of all possible $n$-ary morphisms that can be created from $\leq k$-ary morphisms in $\cF$. 

By \cref{obs:k-restricted-op-eq}, we can detect when a unital $k$-restricted $\infty$-operad $\cF$ is in the image of $\Lk$:
\begin{corollary}\label{cor:left-adj-image-of-Lk}
    A unital $j$-restricted $\infty$-operad $\cF$ is in the image of $\Lk$ if and only if for each $k <n \leq j$ the canonical map 
    \begin{equation}
        \colim_{X \in ((\Omegacleqk)_{\CC{n}/^{\maxsurj}})^\op} \cF(X) \to \cF(\CC{n})
    \end{equation}
    is an equivalence.
\end{corollary}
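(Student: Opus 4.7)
The plan is to combine the fully faithfulness of $\Lk$ from \cref{thm:left-adjoint-theorem} with the colimit formula from \cref{cor:left-adj-description-on-F} and the detection principle from \cref{obs:k-restricted-op-eq}. Since $\Lk$ is fully faithful and right adjoint to $(-)^k$, a unital $j$-restricted $\infty$-operad $\cF$ lies in the essential image of $\Lk$ if and only if the counit map $\varepsilon_{\cF} \colon \Lk(\cF^k) \to \cF$ is an equivalence.

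By \cref{obs:k-restricted-op-eq}, the counit $\varepsilon_{\cF}$ is an equivalence precisely when the induced map $\Lk(\cF^k)(\CC{n}) \to \cF(\CC{n})$ is an equivalence for every $0 \leq n \leq j$. The strategy is to split this check into two ranges. For $n \leq k$, full faithfulness of $\Lk$ gives that the unit $\cF^k \to (\Lk(\cF^k))^k$ is an equivalence, and since $(\Lk(\cF^k))^k = i_k^* \Lk(\cF^k)$ agrees with $\Lk(\cF^k)$ on objects of $\Omegacleqk$, we conclude $\Lk(\cF^k)(\CC{n}) \simeq \cF^k(\CC{n}) = \cF(\CC{n})$ automatically in this range. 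Hence no condition is imposed on $\cF$ for $n \leq k$.

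For $k < n \leq j$, we apply \cref{cor:left-adj-description-on-F} to the unital $k$-restricted $\infty$-operad $\cF^k$, yielding
\begin{equation}
\Lk(\cF^k)(\CC{n}) \;\simeq\; \colim_{X \in ((\Omegacleqk)_{\CC{n}/^{\maxsurj}})^{\op}} \cF^k(X) \;=\; \colim_{X \in ((\Omegacleqk)_{\CC{n}/^{\maxsurj}})^{\op}} \cF(X),
\end{equation}
where the second identification uses that every $X$ in $(\Omegacleqk)_{\CC{n}/^{\maxsurj}}$ is a closed $k$-dendroidal tree, so $\cF^k(X) = \cF(X)$. Under this identification, the component of the counit at $\CC{n}$ is exactly the canonical map $\colim_X \cF(X) \to \cF(\CC{n})$ induced by the maps $X \to \CC{n}$ (i.e., by evaluating at $\CC{n}$, since it is the terminal object of the indexing category).

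Combining these two observations, the counit $\varepsilon_{\cF}$ is an equivalence if and only if for every $k < n \leq j$ the stated canonical map is an equivalence, which is the desired criterion. No step of the argument is particularly delicate: the only substantive input is the colimit formula of \cref{cor:left-adj-description-on-F}, which was already established in the previous subsection.
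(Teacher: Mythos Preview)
Your approach is correct and is exactly what the paper intends: the corollary is stated in the paper as an immediate consequence of \cref{obs:k-restricted-op-eq} together with \cref{cor:left-adj-description-on-F}, and your expansion via the counit of the adjunction is the standard way to make this precise.

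Two slips to fix. First, $\Lk$ is the \emph{left} adjoint to $(-)^k$, not the right adjoint; the rest of your argument is consistent with this (you correctly identify the counit as $\Lk(\cF^k)\to\cF$), so this is only a wording error. Second, your parenthetical description of the canonical map is wrong: for $n>k$ the object $\CC{n}$ is not in the indexing category $(\Omegacleqk)_{\CC{n}/^{\maxsurj}}$ at all, so it certainly is not terminal there, and there are no maps $X\to\CC{n}$ in play. The canonical map is obtained by applying the contravariant functor $\cF$ to the structure maps $\CC{n}\to X$, yielding a compatible family $\cF(X)\to\cF(\CC{n})$ and hence a map out of the colimit; equivalently, it is the component at $\CC{n}$ of the counit of the presheaf-level adjunction $\LKan_k\dashv i_k^*$. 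With these corrections your argument is complete.
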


\begin{example}\label{ex:k-1-left-adjoint}
    Let $k = 1$. By \cref{ex:k-1-part-I} we see that $\Opunleq{1} \simeq \cat$. The fully faithful composite 
    \begin{equation}
        \cat \simeq \Opunleqk \xrightarrow{\mathrm{L}_1} \Op
    \end{equation}
    takes an $\infty$-category $\cC$ to the unital $\infty$-operad with underlying $\infty$-category $\cC$ and no $n$-ary morphisms for $n \geq 2$. This is an unital analogue of the embedding $\cat \to \Op$ that takes an $\infty$-category to itself viewed as a $\infty$-operad with only $1$-ary morphisms.
\end{example}

Let us write the colocalization functor $\Lk \circ (-)^k$ simply as $\Lk$. 
Let $\cO$ be a unital $\infty$-operad, we have a filtration
\begin{equation}\label{eq:left-adjoint-fil}
    \mathrm{L}_1\cO \to \mathrm{L}_2\cO \to \cdots \to \Lk\cO \to \cdots \to \cO.
\end{equation}
The $k$-th stage $\Lk\cO$ is the $k$-th arity approximation of $\cO$, as they agree on $(\leq k)$-ary morphisms. This implies the following:
\begin{corollary}\label{cor:left-fil-colimits}
   Let $\cO$ be a unital $\infty$-operad. The canonical map  
   \begin{equation}
    \colim_{k \in \mathbb{N}_{\geq 1}} \Lk\cO \to \cO
   \end{equation} 
   is an equivalence.
\end{corollary}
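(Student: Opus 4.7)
The plan is to reduce to a pointwise check at each closed tree $T$, and then to use that every closed tree is $k$-dendroidal for all sufficiently large $k$ together with the fully faithfulness of $\Lk$.

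First, I would argue that the colimit on the left, a priori taken in $\Opun = \Segcpl(\Omegac)$, is computed pointwise as a presheaf on $\Omegac$. The Segal condition of \cref{def:closed-segal-condition} is a finite limit condition, since each indexing category $(\Omega^{\el})_{/T}$ is finite; as filtered colimits in $\Spaces$ commute with finite limits, the Segal condition is preserved under filtered colimits in $\cP(\Omegac)$. Completeness is likewise a finite-limit condition on $r^*\cF$, so it too survives filtered colimits. Consequently, for every closed tree $T$,
\begin{equation}
    \left(\colim_{k \geq 1} \Lk\cO\right)(T) \simeq \colim_{k \geq 1} \Lk\cO(T).
\end{equation}

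Next, fix a closed tree $T$ and let $n_T$ be the maximum valence of an edge of $T$, so that $T \in \Omegacleqk$ whenever $k \geq n_T$. Since $\Lk \colon \Opunleqk \hookrightarrow \Opun$ is fully faithful by \cref{thm:left-adjoint-theorem}, the unit $\cO^k \to (\Lk(\cO^k))^k$ of the adjunction is an equivalence; evaluating at $T \in \Omegacleqk$ gives $\cO(T) \isoto \Lk\cO(T)$, and the triangle identities force this to be inverse to the counit component $\Lk\cO(T) \to \cO(T)$. Hence for $k \geq n_T$ the counit at $T$ is an equivalence, and by naturality together with a $2$-out-of-$3$ argument the transition map $\Lk\cO(T) \to \mathrm{L}_{k+1}\cO(T)$ is also an equivalence. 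Thus the diagram $k \mapsto \Lk\cO(T)$ is eventually equivalent to the constant diagram $\cO(T)$, so the induced map $\colim_k \Lk\cO(T) \to \cO(T)$ is an equivalence.

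Combining the two steps yields $\colim_k \Lk\cO \isoto \cO$ in $\Opun$. The main technical point is the first step: carefully justifying that both the Segal and completeness conditions survive filtered colimits, so that the colimit in $\Opun$ may be computed pointwise in $\cP(\Omegac)$. Once this is in hand, the rest of the argument follows essentially immediately from the fully faithfulness of each $\Lk$ and the finiteness of every closed tree.
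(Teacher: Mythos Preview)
Your argument is correct and matches the paper's approach, which leaves the corollary as an immediate consequence of the observation that $\Lk\cO$ and $\cO$ agree on all closed $k$-dendroidal trees. You are in fact more careful than the paper, explicitly justifying why the colimit in $\Opun$ may be computed pointwise; the one claim needing elaboration---that completeness survives filtered colimits---holds because for a Segal space $W$ the subspace of equivalences $W_1^{\simeq}\subset W_1$ can be identified with the finite limit $W_3 \times_{W_1 \times W_1} (W_0 \times W_0)$ (using the face maps $\{0,2\},\{1,3\}\colon [1]\to[3]$ and the degeneracies), so completeness is genuinely a finite-limit condition among Segal spaces.
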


Let us extend the discussion to the little cube operads:
\begin{example}\label{ex:Ak-operads}
    Fix $k \geq 1$.
    Let $\AA_\infty$ be the unital associative operad, then $\AA_k \coloneqq \Lk\AA_\infty$ is the $\infty$-operad generated by $\leq k$-arity morphisms in $\AA_\infty$. As expected, we get a converging filtration
    \begin{equation}
        \AA_1 \to \AA_2 \to \cdots \to \AA_k \to \cdots \to \AA_\infty.
    \end{equation}
    Indeed, $\AA_k$ is the unital analogue of $\AAnu_k$ we encountered in the introduction.
    Let us explicitly compute the $n$-ary morphism space of $\AA_k(n)$.
    Following \cite[Example 2.1.6]{goppl}, there exists a topological model for $\AA_\infty$ whose $n$-ary space is $K_n \times S_n$. Recall that $K_n$ is the $n$-th Stasheff associahedron, which is a convex (hence contractible) $(n-2)$-dimensional polytope.
    
    In \cite[Example 3.1.13]{goppl}, Goppl shows that the space $\AA_k(k+1)$ can be represented by $\partial K_{k+1}$, and the map $\AA_{k}(k+1) \to \AA_\infty(k+1)$ is the canonical inclusion.
    Goppl's argument can be extended to show that 
    for any $n > k$, the space $\AA_k(n)$ can be represented by $\partial_{k-2} K_n$, which is the union of $\leq (k-2)$-faces of $K_n$, and the map $\AA_k(n)
    \to \AA_{\infty}(n)$ is the canonical inclusion
    \begin{equation}
        \partial_{k-2} K_n \times S_n \hookrightarrow K_{n} \times S_n.
    \end{equation} 
\end{example}

\begin{example}\label{ex:Ek-filtration}
    Let $\EE_n$ be the little cube $\infty$-operad (see \cite[Definition 5.1.0.4]{HA}). Following \cite[Example 2.1.6]{goppl}, for any $k \geq 1$ there exists a topological operad called the \emph{Fulton-MacPherson operad} $\FM_n$ 
     representing $\EE_n$. For each $m \geq 1$, the $n$-ary space $\FM_n(m)$ is a compact topological manifold with corners. The manifold $\FM_n(m)$ is naturally stratified over the poset $\Psi(m)$ of closed trees with $m$ labeled maximal edges and no edges of valence $1$. For $k < m$, let $\FM_n^{\leq k}(m)$ be the closed subspace of $\FM_n(m)$ lying over the sub-poset $\Psi_{\leq k}(m)$ of $\Psi(m)$ consisting of $k$-dendroidal trees.
     Generalizing \cite[Example 3.1.13]{goppl}, we can identify the $m$-ary morphism space of $\Lk\EE_n$ with $\FM_n^{\leq k}(m)$, and the map $\Lk\EE_n(m) \to \EE_n(m)$ with the canonical inclusion
     \begin{equation}
         \FM_n^{\leq k}(m) \hookrightarrow \FM_n(m).
     \end{equation}
\end{example}

\section{The right adjoint $\Rk$}\label{sec:right-adjoint}
Now we turn to the right adjoint. Given $1 \leq k \leq j \leq \infty$, analogous to \eqref{eq:LKan-k-adjunction}, we have an adjunction
\begin{equation}\label{eq:RKan-k-adjunction}
    \begin{tikzcd}[column sep=1.5cm]
i_k^* \colon \cP(\Omegacleq{j})
\arrow[ yshift=0.9ex]{r}{}
\arrow[hookleftarrow,yshift=-0.9ex]{r}[yshift=-0.15ex]{\bot}[swap]{}
&
 \cP(\Omegacleqk) \colon \RKan_k 
\end{tikzcd}
\end{equation}
where the fully faithful right adjoint $\RKan_k$ is the right Kan extension along the inclusion $i_k \colon \Omegacleqk \hookrightarrow \Omegacleq{j}$. In this section, we show that the $\RKan_k$ preserves complete Segal objects, thus restricting to a fully faithful right adjoint
\begin{equation}
    \Rk \colon \Opunleq{k} \to \Opunleq{j}
\end{equation}
to the restriction functor 
$(-)^k \colon \Opunleq{j} \to \Opunleq{k}$ defined in \cref{nota:Opunleqk}.

\subsection{Rooted subtrees}
In this subsection, we establish some technical results needed for the right adjoint statement.
Recall that a functor of $\infty$-categories $G \colon \cC \to \cD$ is \emph{cofinal} (\cite[\S4.1.1]{HTT}) if for every $d \in \cD$ the $\infty$-category $\cC_{d/}$ is weakly contractible.
Note that right adjoints are cofinal: 
if $G$ is a right adjoint, then for every $d$ the $\infty$-category $\cC_{d/}$ is weakly contractible as it has an initial object.
\begin{definition}\label{def:Omegaleqkrsub}
    Given $k \geq 1$ and a closed tree $T$, we denote by  $(\Omegacleqk)_\rsubT$ the full subcategory of $(\Omegacleqk)_{/T}$ whose objects are rooted subtree inclusions from closed $k$-dendroidal trees to $T$. 
\end{definition}

When $k = \infty$ we simply denote $(\Omegacleq{\infty})_\rsubT$ as $(\Omegac)_{\rsubT}$. For a set $S$, we denote by $\cP(S)$ the power set of $S$, which we viewed as a category with morphisms being inclusions. We denote by $\cP_{\leq k}(S)$ the full subcategory of $\cP(S)$ consisting of subsets $I \subset S$ with $|I| \leq k$.
\begin{example}\label{ex:Cn-rsub}
   Let $T$ be the closed $n$-corolla $\CC{n}$ with root $r$.
   By \cref{obs:subtree-data}, the category
    $(\Omegac)_{^\rsub/\CC{n}}$
   is equivalent to the power set $\cP(v(r))$.
   For $k \geq 1$, the full subcategory $(\Omegacleqk)_{^\rsub/\CC{n}}$ corresponds to  $\cP_{\leq k}(v(r))$.
\end{example}

\begin{lemma}\label{lem:right-adjoint-1}
    Given $k \geq 1$ and a closed tree $T$, we have an adjunction 
\begin{equation}\label{eq:j-to-k-restricted-rsub-adjunction}
    \begin{tikzcd}[column sep=1.5cm]
\Fact(-) \colon (\Omegacleqk)_{/T}
\arrow[yshift=0.9ex]{r}{}
\arrow[hookleftarrow, yshift=-0.9ex]{r}[yshift=-0.15ex]{\bot}[swap]{}
&
 (\Omegacleqk)_\rsubT
\end{tikzcd}
    \end{equation}
    where the right adjoint is the canonical inclusion, and the left adjoint is given by taking the factorization with 
    respect to the (max-surjective, rooted subtree inclusion) factorization system on $\Omegac$ constructed in \cref{prop:fact-sys-on-Omegac}\eqref{item:fact-2}.
\end{lemma}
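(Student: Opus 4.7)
The proof will proceed in close analogy to \cref{lem:left-adjoint-1}. The plan is to first observe that on all of $\Omegac$, the (max-surjective, rooted subtree inclusion) factorization system from \cref{prop:fact-sys-on-Omegac}\eqref{item:fact-2} immediately yields an adjunction
\begin{equation}
\begin{tikzcd}[column sep=1.5cm]
\Fact(-) \colon (\Omegac)_{/T}
\arrow[yshift=0.9ex]{r}{}
\arrow[hookleftarrow, yshift=-0.9ex]{r}[yshift=-0.15ex]{\bot}[swap]{}
& (\Omegac)_{\rsubT},
\end{tikzcd}
\end{equation}
whose right adjoint is the inclusion and whose left adjoint sends a morphism $(f\colon X \to T)$ to its rooted-subtree-inclusion factor, i.e.\ the smallest rooted subtree of $T$ through which $f$ factors. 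This is a formal consequence of orthogonality: for a rooted subtree inclusion $i \colon Y \hookrightarrow T$, any morphism $X \to T$ extends uniquely to a morphism $\Fact(X \to T) \to Y$ of rooted subtree inclusions of $T$, by the lifting property of the factorization system.

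Next, I would restrict this adjunction along the inclusion $\Omegacleqk \hookrightarrow \Omegac$. Because the subcategory $\Omegacleqk$ is closed under both the source and target constructions of the factorization, the adjunction restricts. Concretely, given $(f \colon X \to T)$ in $(\Omegacleqk)_{/T}$ with $X$ and $T$ both closed $k$-dendroidal, factor $f = i \circ g$ with $g \colon X \to Y$ max-surjective and $i \colon Y \hookrightarrow T$ a rooted subtree inclusion. By \cref{lem:k-closure}\eqref{item-enum:closure-1}, any subtree of the $k$-dendroidal tree $T$ is again $k$-dendroidal, so $Y \in \Omegacleqk$. Hence $\Fact(-)$ restricts to a functor $(\Omegacleqk)_{/T} \to (\Omegacleqk)_\rsubT$, and the inclusion in the other direction is obvious.

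Finally, I would verify that the restricted functors still form an adjoint pair. This is immediate: the unit and counit of the ambient adjunction on $\Omegac$ lie entirely in $\Omegacleqk$ when the inputs do, since all trees involved (source, target, and the factor $Y$) are $k$-dendroidal by the argument above. Thus the triangle identities transfer verbatim.

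There is essentially no obstacle beyond invoking \cref{lem:k-closure}\eqref{item-enum:closure-1} at the right moment; the entire proof parallels \cref{lem:left-adjoint-1}, with the roles of the two halves of the factorization interchanged and \cref{prop:fact-sys-on-Omegac}\eqref{item:fact-1} replaced by \cref{prop:fact-sys-on-Omegac}\eqref{item:fact-2}.
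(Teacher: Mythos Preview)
Your overall approach mirrors the paper's, but there is a concrete error in the restriction step. You write ``with $X$ and $T$ both closed $k$-dendroidal'' and then invoke \cref{lem:k-closure}\eqref{item-enum:closure-1} to conclude that the factor $Y$, as a subtree of $T$, is $k$-dendroidal. However, the lemma is stated for an arbitrary closed tree $T$, with no $k$-dendroidal hypothesis on $T$; indeed, in its application in \cref{prop:Rk-complete-Segal} the tree $T$ is only assumed to be $j$-dendroidal with $j \geq k$. So \cref{lem:k-closure}\eqref{item-enum:closure-1} does not apply in general, and your argument fails exactly where the lemma is actually needed.

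The correct move, which the paper uses, is to invoke \cref{lem:k-closure}\eqref{item-enum:closure-2} instead: in the factorization $f = i \circ g$ with $g \colon X \to Y$ max-surjective, the source $X$ is $k$-dendroidal by hypothesis, and hence so is the target $Y$. This is the natural dual of \cref{lem:left-adjoint-1}: there one used part \eqref{item-enum:closure-1} because the relevant factor was a subtree inclusion into a $k$-dendroidal target, while here the relevant factor is a max-surjective map out of a $k$-dendroidal source, so part \eqref{item-enum:closure-2} is what is needed. Once you swap the citation, your proof is complete and coincides with the paper's.
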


\begin{proof}
    The (max-surjective, rooted subtree inclusion) factorization system on $\Omegac$ gives an adjunction 
    \begin{equation}
      \begin{tikzcd}[column sep=1.5cm]
        \Fact(-) \colon (\Omegac)_{/T}
        \arrow[yshift=0.9ex]{r}{}
        \arrow[hookleftarrow, yshift=-0.9ex]{r}[yshift=-0.15ex]{\bot}[swap]{}
        & (\Omegac)_{\rsubT}
        \end{tikzcd}  
    \end{equation}
    By \cref{lem:k-closure}\eqref{item-enum:closure-2}, this restricts to the desired adjunction \eqref{eq:j-to-k-restricted-rmax-adjunction}.
\end{proof}

Given a closed tree $T$ with root $r$. By \cref{nota:upper-and-lower-subset} we have $C_r$ the $|v(r)|$-corolla subtree of $T$ with root $r$. We get a functor 
\begin{equation}
   \phi \colon (\Omegac)_\rsubT \to (\Omegac)_{^\rsub/\CC{r}}
\end{equation}
given by taking a rooted subtree $X$ of $T$ to $X\cap \CC{r}$. Note that the subtree $X \cap \CC{r}$ is non-empty as it contains the root $r$.
By \cref{ex:Cn-rsub} we get a composition
\begin{equation}\label{eq:rsub-eq}
    p \colon (\Omegac)_\rsubT \xrightarrow{\phi} (\Omegac)_{^\rsub/\CC{r}} \simeq \cP(v(r)).
\end{equation}
This takes a rooted subtree $X$ of $T$ to the subset $I$ of $v(r)$ consisting of leaves of $C_r$ that are in $X$. For $k \geq 1$, 
the functor  $p$ restricts to a functor 
\begin{equation}\label{eq:rsub-eq-k}
    p_k \colon (\Omegacleqk)_\rsubT \xrightarrow{\phi}  \cP_{\leqk}(v(r)).
\end{equation}
We will view $(\Omegac)_\rsubT$ and $(\Omegacleqk)_\rsubT$ as categories over $\cP(v(r))$ and $\cP_{\leqk}(v(r))$ respectively.

    \begin{definition}\label{def:right-adjoint-new-cat}
    Given a closed tree $T$  with root $r$. 
    We define a poset $\Omegacrsub(T)$ as follows:
    \begin{enumerate}
        \item An object of $\Omegacrsub(T)$ is a pair $(I, X_I)$ where $I$ is a subset of $v(r)$ and $X_I$ is a collection of rooted subtrees $X_e$ of $T_{\geq e}$, one for each $e \in I$.
        \item Given two objects $(I, X_I)$ and $(I', X'_{I'})$, we have $(I, X_I) \leq (I', X'_{I'})$ if $I \subset I'$ and $X_e \subset X'_{e}$ for every $e \in I$.
    \end{enumerate}
\end{definition}
We view $\Omegacrsub(T)$ as a category with morphisms being $\leq$.
There is a canonical projection functor 
\begin{equation}
    q \colon \Omegacrsub(T) \to \cP(v(r))
\end{equation}
 given by taking $(I, X_I)$ to $I$.
\begin{lemma}\label{lem:q-coCart}
    The functor $q \colon \Omegacrsub(T) \to \cP(v(r))$ 
    is a coCartesian fibration.\footnote{We refer the reader to \cite[\S 2.4]{HTT} for the theory of coCartesian fibrations.}
\end{lemma}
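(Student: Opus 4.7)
The plan is to exhibit explicit coCartesian lifts of every morphism in $\cP(v(r))$. Since both $\Omegacrsub(T)$ and $\cP(v(r))$ are $1$-categorical posets, every hom set is either empty or a singleton, so the coCartesian condition simplifies dramatically: a morphism $(I, X_I) \leq (I', X'_{I'})$ over $I \subset I'$ is coCartesian if and only if for every $(I'', X''_{I''}) \in \Omegacrsub(T)$ with $I \subset I' \subset I''$ and $(I, X_I) \leq (I'', X''_{I''})$, one has $(I', X'_{I'}) \leq (I'', X''_{I''})$.

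Given $(I, X_I) \in \Omegacrsub(T)$ and an inclusion $I \subset I'$ in $\cP(v(r))$, I would take the candidate coCartesian lift to be $(I, X_I) \leq (I', X'_{I'})$, where
\begin{equation}
    X'_e = \begin{cases} X_e & \text{if } e \in I, \\ \{e\} & \text{if } e \in I' \setminus I. \end{cases}
\end{equation}
Here $\{e\}$ denotes the singleton rooted subtree of $T_{\geq e}$ consisting only of the root edge $e$, which is the smallest rooted subtree of $T_{\geq e}$ by \cref{obs:subtree-data}. In particular $X'_e$ is a rooted subtree of $T_{\geq e}$ for every $e \in I'$, so $(I', X'_{I'})$ is a legitimate object of $\Omegacrsub(T)$.

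To verify that this is indeed coCartesian, suppose $(I'', X''_{I''})$ satisfies $I' \subset I''$ and $X_e \subset X''_e$ for all $e \in I$. I need to check that $X'_e \subset X''_e$ for every $e \in I'$. For $e \in I$ this is immediate from the hypothesis, while for $e \in I' \setminus I$ we have $X'_e = \{e\} \subset X''_e$ because the root $e$ always belongs to any rooted subtree of $T_{\geq e}$.

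The construction is essentially forced by the requirement that the lift be \emph{initial} among all extensions, so the only mildly subtle point — and what I would view as the one step requiring care — is recognizing that for the new indices $e \in I' \setminus I$ we must pick the minimal rooted subtree $\{e\}$ rather than, say, the full $T_{\geq e}$; any strictly larger choice would fail to be coCartesian as it would not inject into a general $X''_e$. Under the straightening correspondence, this is the statement that $q$ classifies the functor $\cP(v(r)) \to \cat$ sending $I \mapsto \prod_{e \in I} \RootSub(T_{\geq e})$, with transition maps along $I \subset I'$ given by padding with singleton rooted subtrees, and the compatibility of these padding maps under composition is a direct set-theoretic check.
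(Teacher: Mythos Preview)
Your proof is correct and follows essentially the same approach as the paper: both construct the coCartesian lift of $I \subset I'$ starting at $(I,X_I)$ by padding with the singleton rooted subtree $\{e\}$ for each $e \in I' \setminus I$, and both justify coCartesianness via the minimality of $\{e\}$ in the poset of rooted subtrees of $T_{\geq e}$. Your version is in fact more detailed, making explicit the poset-level reformulation of the coCartesian condition and spelling out the verification; the paper's proof is a terse two-sentence sketch of the same idea.
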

\begin{proof}
 Given an object $(I, X_I)$ in $\Omegacrsub(T)$ and an inclusion $I \subset I'$, we can define a new object $(I', X'_{I'})$ where  
    \begin{equation}
        X'_{e'} = \begin{cases}
            X_e & e \in I \\ 
            \{e'\} & e \neq I.
        \end{cases}
    \end{equation}
    The canonical map $(I, X) \to (I', X'_{I'})$ is a $q$-coCartesian morphism follows from the fact that the root $\{e'\}$ of $T_{\geq e'}$ is the minimal object in the poset of rooted subtrees of $T_{\geq e'}$.
\end{proof}
Given a closed tree $T$ with root $r$, we are going to show that $\Omegacrsub(T)$ and $(\Omegac)_\rsubT$ are isomorphic as categories over $\cP(v(r))$.
Given an object $(I, X_I)$ in $\Omegacrsub(T)$. 
Let $\CC{I}$ be the sub-corolla of $\CC{r}$ consisting of edges in $I$; we have a rooted subtree $\CC{I} \sqcup_{e \in I}X_e$ of $T$ given by gluing the root of $X_e$ to the maximal edge $e$ of $\CC{I}$ for each $e \in I$.

The assignment $(I,X) \mapsto \CC{I} \sqcup_{e \in I}X_e$ defines a functor 
\begin{equation}
    \Gamma(T) \colon \Omegacrsub(T) \to (\Omegac)_\rsubT
\end{equation}
over $\cP(v(r))$.

\begin{proposition}\label{prop:Gamma-equivalence}
    Given a closed tree $T$, the functor $\Gamma(T)$ is an equivalence.
\end{proposition}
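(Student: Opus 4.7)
The plan is to explicitly construct an inverse functor $\Psi(T) \colon (\Omegac)_\rsubT \to \Omegacrsub(T)$ and check that both compositions are the identity. This is feasible because both categories in question are posets (morphisms are inclusions), so once the object-level bijection is established, checking functoriality amounts to verifying that inclusion is preserved and reflected.

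Concretely, given a rooted subtree $Y \subseteq T$, we set $I := Y \cap v(r)$ and, for each $e \in I$, define $X_e := Y \cap T_{\geq e}$. First I would verify that this is well-defined: since $Y$ is a subtree containing $e$, the intersection $Y \cap T_{\geq e}$ is a subtree of $T_{\geq e}$ containing its root $e$, hence a rooted subtree. Setting $\Psi(T)(Y) := (I, \{X_e\}_{e \in I})$ gives a candidate inverse. Compatibility with the projections to $\cP(v(r))$ is clear from the definition, since $p \circ \Gamma(T)$ picks out the set $I$ and $q$ does the same.

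The key step is checking $\Gamma(T) \circ \Psi(T) = \mathrm{id}$, i.e., that $Y = \CC{I} \sqcup_{e \in I} X_e$ as subtrees of $T$. The inclusion $\supseteq$ is immediate from the definition of $X_e$. For $\subseteq$, take $y \in Y$ with $y \neq r$. Since $T_{\leq y}$ is linearly ordered and $Y$ is a subtree containing both $r$ and $y$, every edge on the path from $r$ to $y$ lies in $Y$; in particular the unique $e \in v(r)$ with $y \geq e$ lies in $Y \cap v(r) = I$, so $y \in Y \cap T_{\geq e} = X_e$. For the other composition $\Psi(T) \circ \Gamma(T) = \mathrm{id}$, observe that for an object $(I, \{X_e\})$ with $Y := \CC{I} \sqcup_{e \in I} X_e$, we have $Y \cap v(r) = I$ (using $X_e \subseteq T_{\geq e}$ and that distinct $e, e' \in v(r)$ are independent) and $Y \cap T_{\geq e} = X_e$ for $e \in I$.

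Finally, both $\Gamma(T)$ and $\Psi(T)$ manifestly preserve the $\leq$ relation: a larger $I$ and larger $X_e$'s produce a larger glued subtree, and conversely an inclusion $Y \subseteq Y'$ of rooted subtrees gives $I \subseteq I'$ and $Y \cap T_{\geq e} \subseteq Y' \cap T_{\geq e}$ for $e \in I$. Since both categories are posets, this upgrades the object bijection to a functor isomorphism over $\cP(v(r))$. I do not expect any serious obstacle here — the only subtle point is the path-closure argument in step 3, which is precisely where the rootedness hypothesis on $Y$ is used.
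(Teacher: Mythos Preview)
Your proposal is correct and follows essentially the same approach as the paper: the paper constructs the inverse functor $\psi(T)$ by sending a rooted subtree $X$ to $(I, \{X_{\geq e}\}_{e \in I})$ (which is exactly your $\Psi(T)$, since $X_{\geq e} = X \cap T_{\geq e}$) and then declares that checking the two composites are identities is straightforward. You have simply supplied those straightforward details, including the key downward-closure observation that uses rootedness.
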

\begin{proof}
    Let us construct the inverse functor. Given $X$ a rooted subtree of $T$. Let $I$ be the subset of $v(e)$ consisting of edges in $X$.
    The assignment 
    $X \mapsto (I, \{X_{\geq e}\}_{e \in I})$ defines a functor 
    \begin{equation}
      \psi(T) \colon  (\Omegac)_\rsubT \to \Omegacrsub(T)
    \end{equation}
    over $\cP(v(r))$. It is straightforward to check that $\Gamma(T)$ and $\psi(T)$ are inverses of each other.
\end{proof}

\begin{definition}
    Given a closed tree $T$. Let $\Omegacrsub(T,k)$ denote the full subcategory of $\Omegacrsub(T)$ consisting of objects $(I, X)$ such that  $|I| \leq k$, and  $X_e$ is a $k$-dendroidal tree for every $e \in I$.
\end{definition}
Given $k \geq 1$, the functor $q \colon \Omegacrsub(T) \to \cP(v(r))$ restricts to a functor 
\begin{equation}
    q_{k} \colon \Omegacrsub(T,k) \to \cP_{\leq k}(v(r)).
\end{equation}
By the same argument as \cref{lem:q-coCart}, we get:
\begin{lemma}\label{lem:qk-coCart}
   Given $k \geq 1$ and a closed tree $T$, the functor  $q_{k} \colon \Omegacrsub(T,k) \to \cP_{\leq k}(v(r))$ is a coCartesian fibration. 
\end{lemma}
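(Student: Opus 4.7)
The plan is to adapt the proof of \cref{lem:q-coCart} to the $k$-restricted setting, checking that the coCartesian lift constructed there lands in $\Omegacrsub(T,k)$ whenever the source is in $\Omegacrsub(T,k)$ and the morphism downstairs lies in $\cP_{\leqk}(v(r))$. Concretely, given an object $(I, X_I) \in \Omegacrsub(T,k)$ and an inclusion $I \subset I'$ with $|I'| \leq k$, I would take as candidate lift the same morphism $(I, X_I) \to (I', X'_{I'})$ as in \cref{lem:q-coCart}, where $X'_{e} = X_e$ for $e \in I$ and $X'_{e'} = \{e'\}$ for $e' \in I' \setminus I$.

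The key verification is that $(I', X'_{I'}) \in \Omegacrsub(T,k)$. The cardinality bound $|I'| \leq k$ is given. For the $k$-dendroidal condition on each component, the subtrees $X'_e = X_e$ for $e \in I$ are $k$-dendroidal by assumption, while each $X'_{e'} = \{e'\}$ for $e' \in I' \setminus I$ is a single edge, i.e. a copy of $\CC{0}$, which is a $0$-corolla and therefore trivially $k$-dendroidal for any $k \geq 1$.

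Once this is verified, I would conclude the coCartesian property as follows. By \cref{lem:q-coCart} the constructed morphism is $q$-coCartesian in $\Omegacrsub(T)$. Since $\Omegacrsub(T,k) \subset \Omegacrsub(T)$ and $\cP_{\leqk}(v(r)) \subset \cP(v(r))$ are full subcategories, and $q_k$ is the restriction of $q$, any $q$-coCartesian morphism whose source and target already lie in $\Omegacrsub(T,k)$ is automatically $q_k$-coCartesian: the required lifting property only quantifies over objects of $\Omegacrsub(T,k)$ whose image lies in $\cP_{\leqk}(v(r))$, and the lifts produced by the $q$-coCartesian property lie in the full subcategory $\Omegacrsub(T,k)$ for free.

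There is no substantive obstacle here; the argument is essentially bookkeeping. The only point that requires attention is confirming that adjoining the trivial rooted subtrees $\{e'\} = \CC{0}$ for new indices $e' \in I' \setminus I$ does not violate the $k$-dendroidal hypothesis, which is clear since $\CC{0}$ has no edges of positive valence.
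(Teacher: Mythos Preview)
Your proposal is correct and matches the paper's approach: the paper simply says ``by the same argument as \cref{lem:q-coCart}'' and leaves the details implicit, while you spell them out. The only minor variation is that in your final step you deduce $q_k$-coCartesianness from $q$-coCartesianness via the full-subcategory restriction, rather than re-running the minimality argument (that $\{e'\}$ remains minimal among $k$-dendroidal rooted subtrees of $T_{\geq e'}$) directly; both routes are equally valid here.
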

By \cref{prop:Gamma-equivalence} we have:
\begin{corollary}
    Given $k \geq 1$ and  a closed tree $T$, the isomorphism $\Gamma(T)$ restricts to an isomorphism 
   \begin{equation}
    \Gamma(T,k) \colon \Omegacrsub(T,k) \simeq (\Omegacleqk)_\rsubT.
   \end{equation}
\end{corollary}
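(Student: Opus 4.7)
The plan is to leverage the already-established isomorphism $\Gamma(T) \colon \Omegacrsub(T) \simeq (\Omegac)_{\rsubT}$ from \cref{prop:Gamma-equivalence}, and simply check that $\Gamma(T)$ and its inverse $\psi(T)$ restrict to the stated subcategories on both sides. Since $\Gamma(T)$ is already an isomorphism of posets, all I need to verify is that for a pair $(I, X_I) \in \Omegacrsub(T)$, we have $(I, X_I) \in \Omegacrsub(T,k)$ if and only if the rooted subtree $Y \coloneqq \CC{I} \sqcup_{e \in I} X_e$ of $T$ is $k$-dendroidal.

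For the forward direction, assume $|I| \leq k$ and each $X_e$ is $k$-dendroidal. I would analyze the valences in $Y$ edge by edge: the root of $Y$ is $r$, whose valence equals $|I| \leq k$; each gluing edge $e \in I$ has valence in $Y$ equal to its valence as the root of $X_e$, which is $\leq k$ by assumption; every other edge of $Y$ lies in one of the $X_e$, and since gluing at roots/leaves does not change local valences away from the gluing point, its valence is inherited from $X_e$ and hence is $\leq k$. Thus $Y$ is $k$-dendroidal.

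For the reverse direction, assume $Y$ is a closed $k$-dendroidal tree. The valence of the root $r$ in $Y$ is precisely $|I|$, so $|I| \leq k$. Furthermore, each $X_e$ includes into $Y$ as a rooted subtree (it is the upper subtree $Y_{\geq e}$), so by \cref{lem:k-closure}\eqref{item-enum:closure-1}, each $X_e$ is itself $k$-dendroidal. Hence $(I, X_I) \in \Omegacrsub(T,k)$.

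I do not anticipate a serious obstacle here: the result is essentially a bookkeeping check that the two descriptions of ``being $k$-dendroidal'' (globally on $Y$ versus locally on $(I, X_I)$) coincide, and the only nontrivial input is \cref{lem:k-closure}\eqref{item-enum:closure-1}, which is already available. The mildly delicate point is keeping straight that the valence of the gluing edge $e$ in $Y$ agrees with its valence as a root of $X_e$ — this is where the picture of gluing the root of $X_e$ onto the maximal edge $e$ of $\CC{I}$ must be handled carefully, but no new ideas beyond the constructions of $\Gamma(T)$ and $\psi(T)$ are required.
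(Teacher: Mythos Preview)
Your proposal is correct and is precisely the verification the paper leaves implicit: the paper states the corollary as an immediate consequence of \cref{prop:Gamma-equivalence} with no further argument, and your edge-by-edge valence check (together with \cref{lem:k-closure}\eqref{item-enum:closure-1} for the reverse direction) is exactly how one unwinds that claim.
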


\subsection{The right adjoint $\Rk$}
In this subsection, we construct the fully faithful right adjoint $\Rk$.
First, we need two useful lemmas:
\begin{lemma}\label{lem:right-adjoint-limit-helper-1}
    Given a finite set $S$ and a space $X_s$ for each $s \in S$. Consider the functor $$\cF_X \colon (\cP(S))^{\op} \to \Spaces$$ defined by 
    taking a subset $I$ to $\prod_{s \in I} X_s$, and taking an inclusion $I \subset I'$ to the projection map 
    $$\prod_{s' \in I'} X_{s'} \to \prod_{s \in I} X_s$$ where we project away the $X_{s'}$ factors for $s' \notin I$. 
    Given $k \geq 1$, let $\cF_X|_{\leq k}$ be the composite 
    $$(\cP_{\leqk}(S))^{\op} \hookrightarrow \cP(S)^{\op}\xrightarrow{\cF_X} \Spaces.$$
    Then the
    canonical map 
    \begin{equation}\label{eq:RK-lemma-desired-eq}
        \prod_{s \in S}X_s = \cF_X(S) \to \lim(\cF_X|_{\leq k})
    \end{equation}
    is an equivalence.
\end{lemma}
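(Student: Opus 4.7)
My plan is to realize $\cF_X|_{\leqk}$ as a right Kan extension from singletons and to invoke the compatibility of limits with right Kan extensions. Let $\cE \coloneqq \cP_{=1}(S) \subset \cP(S)$ denote the discrete full subcategory of singletons (with objects indexed by $s \in S$), and let $X_\bullet \colon \cE^{\op} \to \Spaces$ be the functor $\{s\} \mapsto X_s$. Since $\cE^{\op}$ is discrete, $\lim_{\cE^{\op}} X_\bullet \simeq \prod_{s \in S} X_s$.

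The first step is to check that $\cF_X|_{\leqk}$ is the right Kan extension of $X_\bullet$ along $\iota_k^{\op} \colon \cE^{\op} \hookrightarrow \cP_{\leqk}(S)^{\op}$; the hypothesis $k \geq 1$ is used here precisely to ensure that the singletons belong to $\cP_{\leqk}(S)$. By the pointwise formula, the value $(\RKan_{\iota_k^{\op}} X_\bullet)(I)$ is the limit of $X_\bullet$ over the slice $\cE^{\op}_{I/}$. Unpacking, this slice is the discrete category on pairs $(\{s\}, \{s\} \subset I)$, i.e., on the elements of $I$, so the limit is $\prod_{s \in I} X_s$, matching $\cF_X(I)$.

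The second step invokes the composition law: $\lim$ is itself a right Kan extension to the terminal category, and right Kan extensions compose. This gives
\[
\lim_{\cP_{\leqk}(S)^{\op}} \cF_X|_{\leqk} \simeq \lim_{\cP_{\leqk}(S)^{\op}} \RKan_{\iota_k^{\op}} X_\bullet \simeq \lim_{\cE^{\op}} X_\bullet \simeq \prod_{s \in S} X_s,
\]
and naturality of the comparison maps identifies this composite equivalence with the canonical arrow \eqref{eq:RK-lemma-desired-eq}.

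I do not anticipate any real obstacle; the argument reduces to the slice-category computation, which is elementary. The only delicate point is that $k \geq 1$ is needed to keep the singletons inside $\cP_{\leqk}(S)$: for $k = 0$ the indexing category reduces to $\{\emptyset\}$ and the statement would fail whenever $S$ is nonempty.
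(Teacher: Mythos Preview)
Your proposal is correct and follows essentially the same approach as the paper: both arguments recognize the functor as a right Kan extension from the (at most) singleton subsets and then use transitivity of right Kan extensions. The paper phrases this as ``$\cF_X$ is strongly Cartesian'' and evaluates the right Kan extension at the terminal object $S \in \cP(S)$, whereas you right Kan extend along $\cP_{\leqk}(S)^{\op} \to \pt$ instead; these are two ways of packaging the same transitivity statement.
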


\begin{proof}
    Note that $\cF_X$ is strongly Cartesian (see \cite[Definition 6.1.1.2]{HA}), that is,
    $\cF_X$ is the right Kan extension of $$\cF_X|_{\leq 1} \colon (\cP_{\leq 1}(S))^{\op} \to \Spaces$$ along the inclusion $(\cP_{\leq 1}(S))^{\op} \hookrightarrow \cP(S)^{\op}$. Thus for any $k \geq 1$,  $\cF_X$ is also the right Kan extension of 
    $\cF_X|_{\leq k} \colon (\cP_{\leqk}(S))^{\op} \to \Spaces$ along the inclusion $(\cP_{\leqk}(S))^{\op} \hookrightarrow \cP(S)^{\op}$. Finally, \eqref{eq:RK-lemma-desired-eq} is an equivalence by evaluating the right Kan extension at the object $S$.
    \end{proof}
    Given a functor of $\infty$-categories $p \colon \cC \to \cD$. For any $d \in \cD$ we denote by $\cC_d$ the fiber $p^{-1}(d)$.
    
    \begin{lemma}\label{lem:right-adjoint-limit-helper-2}
        Given a coCartesian fibration of $\infty$-categories $p \colon \cC \to \cD$ and a functor $\cF \colon \cC \to \Spaces$. For any $d \in \cD$, the canonical map 
        \begin{equation}
            \lim_{c \in \cC_d}\cF(c) \to \lim_{c \in \cC_{/d}}\cF(c)= \RKan\cF(d)
        \end{equation}
        is an equivalence.
    \end{lemma}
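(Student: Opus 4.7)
The plan is to exhibit the fiber inclusion $\iota \colon \cC_d \hookrightarrow \cC_{/d}$ as a right adjoint, from which the equivalence of limits will follow by the cofinality calculus. The left adjoint $L \colon \cC_{/d} \to \cC_d$ is built from $p$-coCartesian lifts: for each object $(c, \alpha \colon p(c) \to d) \in \cC_{/d}$, I choose a $p$-coCartesian arrow $\tilde\alpha \colon c \to c_\alpha$ lifting $\alpha$ and set $L(c,\alpha) \coloneqq c_\alpha$; the collection of such lifts assembles into the unit $\mathrm{id} \to \iota L$ of the desired adjunction. For any $c' \in \cC_d$, the universal property of coCartesian morphisms yields a natural equivalence
\begin{equation*}
\Map_{\cC_{/d}}\bigl((c,\alpha),\, \iota(c')\bigr) \;\simeq\; \{f \colon c \to c' \mid p(f) = \alpha\} \;\simeq\; \Map_{\cC_d}(c_\alpha, c'),
\end{equation*}
which exhibits $L \dashv \iota$.

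Given this adjunction, $\iota$ is a cofinal functor: for every $(c,\alpha) \in \cC_{/d}$, the comma $\infty$-category $\cC_d \times_{\cC_{/d}} (\cC_{/d})_{(c,\alpha)/}$ admits the initial object $(c_\alpha, \tilde\alpha)$ coming from the coCartesian lift, hence is weakly contractible, verifying the cofinality criterion of \cite[Theorem~4.1.3.1]{HTT}. Passing to opposites makes $\iota^{\op}$ coinitial, so the canonical comparison between the two limits
\begin{equation*}
\lim_{c \in \cC_d} \cF(c) \;\xrightarrow{\;\simeq\;}\; \lim_{c \in \cC_{/d}} \cF(c)
\end{equation*}
is an equivalence; concretely, the inverse is given by restricting a cone over $\cC_{/d}$ to the fiber, while the forward map extends a cone over $\cC_d$ by composing with the coCartesian lifts. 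The remaining identification with $\RKan \cF(d)$ is the pointwise formula for the right Kan extension along $p$ in the convention used earlier in the paper.

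The only substantive step is the adjunction $L \dashv \iota$ in the first paragraph, which reduces immediately to the universal property of coCartesian lifts. The cofinality consequence and the Kan extension formula are both standard, so once the adjunction is in place the lemma follows formally.
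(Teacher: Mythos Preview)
Your proposal is essentially identical to the paper's own proof: both exhibit the inclusion $\iota \colon \cC_d \hookrightarrow \cC_{/d}$ as the right adjoint in an adjunction whose left adjoint is given by the $p$-coCartesian factorization, and then deduce the limit comparison from the resulting cofinality. You simply spell out the universal-property verification of $L \dashv \iota$ and the cofinality criterion more explicitly than the paper does.
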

    \begin{proof}
       It suffices to show that for any $d \in D$, the inclusion functor $\cC_d \hookrightarrow \cC_{/d}$ is cofinal. This follows from the adjunction 
           \begin{equation}
               \begin{tikzcd}[column sep=1.5cm]
                   \cC_{/d}
                   \arrow[yshift=0.9ex]{r}{}
                   \arrow[hookleftarrow, yshift=-0.9ex]{r}[yshift=-0.15ex]{\bot}[swap]{}
                   &
                    \cC_d
                   \end{tikzcd}
           \end{equation}
           where the right adjoint is the canonical inclusion, and the left adjoint is given by the $p$-coCartesian factorization.
    \end{proof}
Throughout the rest of the subsection, we fix $1 \leq k \leq j \leq \infty$.
\begin{proposition}\label{prop:Rk-complete-Segal}
    The right Kan extension $\RKan_k \colon \cP(\Omegacleqk) \to \cP(\Omegacleq{j})$ preserves complete Segal presheaves.
\end{proposition}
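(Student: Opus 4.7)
The plan is to verify the Segal condition for $\RKan_k\cF$ via \cref{prop:closed-cutting-Segal}\eqref{item:closed-cutting-4} and then deduce completeness automatically. Fix a complete Segal presheaf $\cF$ on $\Omegacleqk$ and a closed $j$-dendroidal tree $T$ with root $r$; the goal is to produce a natural equivalence between $\RKan_k\cF(T)$ and the pullback
\begin{equation*}
\RKan_k\cF(\CC{r}) \times_{\prod_{e \in v(r)} \RKan_k\cF(\overline{e})} \prod_{e \in v(r)} \RKan_k\cF(T_{\geq e}).
\end{equation*}

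The first step is to reduce the defining limit $\RKan_k\cF(T) = \lim_{((\Omegacleqk)_{/T})^\op} \cF$ to one over the reflective subcategory of rooted subtree inclusions. By \cref{lem:right-adjoint-1} the inclusion $(\Omegacleqk)_{\rsubT} \hookrightarrow (\Omegacleqk)_{/T}$ is a right adjoint, with left adjoint $\Fact$ sending $(S, f)$ to its rooted-subtree factor; combining this reflection with the Segal property of $\cF$ (so that each object $(S,f)$ in the defining diagram contributes the same data as $\Fact(S,f)$) one argues that
\begin{equation}
\RKan_k\cF(T) \simeq \lim_{((\Omegacleqk)_{\rsubT})^\op} \cF.
\end{equation}
Transferring along the isomorphism $\Gamma(T,k)$ of \cref{prop:Gamma-equivalence} then yields
\begin{equation}
\RKan_k\cF(T) \simeq \lim_{(I, X_I) \in \Omegacrsub(T,k)^\op} \cF\paren{\CC{I} \sqcup_{e \in I} X_e}.
\end{equation}

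Next, I exploit the coCartesian fibration $q_k \colon \Omegacrsub(T,k) \to \cP_{\leqk}(v(r))$ of \cref{lem:qk-coCart} by applying the Cartesian-fibration analog of \cref{lem:right-adjoint-limit-helper-2} to $q_k^\op$: the total limit factors as an outer limit over $\cP_{\leqk}(v(r))^\op$ of inner limits over the fibers, which by \cref{lem:qk-coCart} are $\prod_{e \in I}(\Omegacleqk)_{\rsub/T_{\geq e}}$. Invoking the Segal condition for $\cF$ on each $\CC{I} \sqcup_{e \in I} X_e$ and collapsing each factor of the inner limit back to $\RKan_k\cF(T_{\geq e})$ (via the first-step reduction applied to $T_{\geq e}$) rewrites this as
\begin{equation}
\RKan_k\cF(T) \simeq \lim_{I \in \cP_{\leqk}(v(r))^\op} \paren{\cF(\CC{I}) \times_{\prod_{e \in I}\cF(\overline{e})} \prod_{e \in I} \RKan_k\cF(T_{\geq e})}.
\end{equation}

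Finally, since limits commute with pullbacks, I pull $\lim_I$ past the pullback. \cref{lem:right-adjoint-limit-helper-1} applied with $X_e = \cF(\overline{e})$ and with $X_e = \RKan_k\cF(T_{\geq e})$ identifies $\lim_{I}\prod_{e \in I}(-)$ with the corresponding product over $v(r)$, while the same reduction chain applied to the corolla $\CC{r}$ (whose rooted subtrees are indexed exactly by $\cP_{\leqk}(v(r))$ via \cref{ex:Cn-rsub}) identifies $\lim_I \cF(\CC{I})$ with $\RKan_k\cF(\CC{r})$; this assembles into the desired pullback square. Completeness is then immediate: $r \colon \Delta \to \Omegacleq{j}$ factors through $\Omegacleqk$, and the full faithfulness of $\RKan_k$ gives $r^*(\RKan_k\cF) \simeq r^*\cF$, which is a complete Segal space by hypothesis. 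The main obstacle is the first reduction to rooted subtree inclusions: the reflective structure of \cref{lem:right-adjoint-1} does not immediately translate to cofinality on the opposite category, so one must combine it with the Segal property of $\cF$ and \cref{obs:max-surj} to ensure that the non-rooted-subtree slice contributes redundantly.
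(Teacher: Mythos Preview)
Your overall strategy matches the paper's proof exactly: reduce the right Kan extension limit to rooted subtree inclusions via \cref{lem:right-adjoint-1}, transfer along $\Gamma(T,k)$, decompose the limit along the coCartesian fibration $q_k$, apply the Segal condition to split each $\cF(\CC{I}\sqcup_{e\in I}X_e)$, and finish with \cref{lem:right-adjoint-limit-helper-1}. The completeness argument is also the same.

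However, the ``main obstacle'' you flag in the first reduction is not an obstacle at all, and your proposed workaround is both unnecessary and not obviously correct. The inclusion $(\Omegacleqk)_{\rsubT}\hookrightarrow(\Omegacleqk)_{/T}$ is a \emph{right} adjoint by \cref{lem:right-adjoint-1}; its opposite is therefore a \emph{left} adjoint, which is coinitial. Equivalently, right adjoints are cofinal in the paper's convention, and $F$ cofinal implies $F^\op$ coinitial. Either way one immediately gets
\[
\lim_{((\Omegacleqk)_{/T})^\op}\cF \;\simeq\; \lim_{((\Omegacleqk)_{\rsubT})^\op}\cF
\]
with no appeal to the Segal property of $\cF$. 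Your suggested alternative---that ``each object $(S,f)$ contributes the same data as $\Fact(S,f)$''---is not true: the max-surjective part of the factorization $S\to S'$ need not induce an equivalence $\cF(S')\to\cF(S)$, so the Segal condition does not collapse these values. Drop that paragraph and invoke cofinality directly; the rest of your argument then goes through as written and coincides with the paper's.
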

\begin{proof}
    We first prove that $\RKan_k$ preserves Segal presheaves. Given a Segal presheaf $\cF \in \Seg(\Omegacleqk)$. By \cref{prop:closed-cutting-Segal}\eqref{item:closed-cutting-4}, it suffices to show to the following:
    for any closed $j$-dendroidal tree $T$ with root $r$,
    the canonical map
    \begin{equation}\label{eq:Rkan-desired-eq}
        \RKan_k\cF(T) \to  \RKan_k\cF(\CC{r}) \times_{\prod_{e \in v(r)} \RKan_k\cF(\overline{e})} \prod_{e \in v(r)}\RKan_k\cF(T_{\geq e})
    \end{equation}
    is an equivalence. 

    By \cref{lem:right-adjoint-1}, we see that the inclusion $(\Omegacleqk)_\rsubT \hookrightarrow (\Omegacleqk)_{/T}$ is cofinal. Thus, the induced map 
    \begin{equation}
        \RKan_k\cF(T) =  \lim_{X \in ((\Omegacleqk)_{/T})^\op} \cF(X) 
        \to \lim_{X \in ((\Omegacleqk)_\rsubT )^{\op}} \cF(X)
    \end{equation}
    is an equivalence. Once again, for brevity, we will omit the $\op$.

    Now we unpack \eqref{eq:Rkan-desired-eq} as a sequence of equivalences.
    \begin{equation}
        \begin{aligned}
            \RKan_k\cF(T) &\simeq \lim_{X \in (\Omegacleqk)_\rsubT} \cF(X) \\ 
            &\simeq \lim_{(I, X_I) \in \Omegacrsub(T,k)} \cF(\CC{I} \sqcup_{e \in I}X_e) \\
            &\simeq \lim_{I \in \cP_{\leqk}(v(r))}
            \left(\lim_{X_I \in \prod_{e\in I} (\Omegacleqk)_{^\rsub/T_{\geq e}}} \cF(\CC{I} \sqcup_{e \in I}X_e) \right)\\
            &\simeq 
            \lim_{I \in \cP_{\leqk}(v(r))} 
             \left(\lim_{X_I \in \prod_{e\in I} (\Omegacleqk)_{^\rsub/T_{\geq e}}}
            \cF(\CC{I}) \times_{\prod_{e \in I} \cF(\overline{e})}
            \prod_{e \in I}\cF(X_e)
            \right)\\
            &\simeq \lim_{I \in \cP_{\leqk}(v(r))} 
            \left(
            \cF(\CC{I}) \times_{\prod_{e \in I} \cF(\overline{e})}\,
            \prod_{e \in I}
            \lim_{X_e\in (\Omegacleqk)_{^\rsub/T_{\geq e}}} \cF(X_e) \right)\\
            &\simeq 
            \lim_{I \in \cP_{\leqk}(v(r))} 
            \left(
            \cF(\CC{I}) \times_{\prod_{e \in I} \cF(\overline{e})}\,
            \prod_{e \in I}
            \RKan_k\cF(T_{\geq e}) \right)\\
            &\simeq \left(\lim_{I \in \cP_{\leqk}(v(r))}
            \cF(\CC{I}) \right) \times_{\prod_{e \in I} \,\cF(\overline{e})}
            \left(\lim_{I \in \cP_{\leqk}(v(r))}
            \prod_{e \in I} 
            \RKan_k\cF(T_{\geq e}) \right)\\
            &\simeq \RKan_k\cF(\CC{r}) \times_{\prod_{e \in v(r)} \RKan_k\cF(\overline{e})} \prod_{e \in v(r)}\RKan_k\cF(T_{\geq e})
        \end{aligned}
    \end{equation}
    Here we used \cref{prop:Gamma-equivalence} for the second equivalence, 
    \cref{lem:qk-coCart} and \cref{lem:right-adjoint-limit-helper-2} for the third equivalence,  \cref{prop:closed-cutting-Segal}\eqref{item:closed-cutting-4} for the fourth equivalence.
    Finally, we used \cref{lem:right-adjoint-limit-helper-1} for the last equivalence.

    The proof of $\RKan_k$ preserving complete objects is the same as the left adjoint case.
\end{proof}

\begin{definition}\label{def:Rk-def}
    Let $\Rk$ denote the functor
    \begin{equation}
        \Opunleqk = \Segcpl(\Omegacleqk) \xhookrightarrow{\RKan_k} \Segcpl(\Omegacleq{j}) = \Opunleq{j}.
    \end{equation}
\end{definition}
Finally, we have our right adjoint statement:
\begin{theorem}\label{thm:right-adjoint-theorem}
    The adjunction \eqref{eq:RKan-k-adjunction}
    restricts to an adjunction 
    \begin{equation}\label{eq:Rk-adjunction}
    \begin{tikzcd}[column sep=1.5cm]
  (-)^k \colon \Opunleq{j}
\arrow[yshift=0.9ex]{r}{}
\arrow[hookleftarrow, yshift=-0.9ex]{r}[yshift=-0.15ex]{\bot}[swap]{}
&
\Opunleqk \colon \Rk 
\end{tikzcd}
    \end{equation}
    Furthermore, the right adjoint $\Rk$ is fully faithful. 
\end{theorem}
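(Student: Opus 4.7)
The plan is to deduce the theorem formally from \cref{prop:Rk-complete-Segal}, which already does all of the technical work. Since the inclusion $i_k \colon \Omegacleqk \hookrightarrow \Omegacleq{j}$ is fully faithful, by standard properties of Kan extensions (e.g. \cite[Prop.~4.3.2.15]{HTT}) the right Kan extension $\RKan_k \colon \cP(\Omegacleqk) \to \cP(\Omegacleq{j})$ is fully faithful, and the counit $i_k^* \circ \RKan_k \isoto \mathrm{id}$ is an equivalence. This is the input on the presheaf level.

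Next I would upgrade this to complete Segal presheaves. By \cref{obs:preserves-complete} the restriction $i_k^* \colon \Seg(\Omegacleq{j}) \to \Seg(\Omegacleqk)$ preserves and detects complete objects, so it restricts to the functor $(-)^k \colon \Opunleq{j} \to \Opunleqk$ from \cref{nota:Opunleqk}. By \cref{prop:Rk-complete-Segal}, the right Kan extension $\RKan_k$ sends complete Segal presheaves on $\Omegacleqk$ to complete Segal presheaves on $\Omegacleq{j}$, so it restricts to a functor $\Rk \colon \Opunleqk \to \Opunleq{j}$ as in \cref{def:Rk-def}.

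With both functors restricted to the complete Segal subcategories, the adjunction $i_k^* \dashv \RKan_k$ on presheaves passes to an adjunction $(-)^k \dashv \Rk$ between full subcategories by the standard fact that adjunctions restrict along reflective inclusions whenever both functors preserve the reflective subcategories. Explicitly, for $\cF \in \Opunleq{j}$ and $\cG \in \Opunleqk$, the mapping space identifications
\begin{equation}
\Map_{\Opunleq{j}}(\cF, \Rk\cG) \simeq \Map_{\cP(\Omegacleq{j})}(\cF, \RKan_k\cG) \simeq \Map_{\cP(\Omegacleqk)}(i_k^*\cF, \cG) \simeq \Map_{\Opunleqk}(\cF^k, \cG)
\end{equation}
follow from the fully faithful inclusions $\Opunleq{j} \hookrightarrow \cP(\Omegacleq{j})$ and $\Opunleqk \hookrightarrow \cP(\Omegacleqk)$ together with the presheaf adjunction.

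Finally, $\Rk$ is fully faithful as the composite of the fully faithful $\RKan_k$ with the fully faithful inclusion of complete Segal presheaves; equivalently, the counit $(-)^k \circ \Rk \isoto \mathrm{id}$ on $\Opunleqk$ is an equivalence because it is the restriction of the counit $i_k^* \circ \RKan_k \isoto \mathrm{id}$, which is already an equivalence on all of $\cP(\Omegacleqk)$. No step of this proof is genuinely difficult once \cref{prop:Rk-complete-Segal} is in hand; the entire substance of the theorem lives in that proposition, whose proof required the explicit analysis of rooted subtrees via $\Omegacrsub(T, k)$ and the two helper lemmas on coCartesian fibrations and strongly Cartesian cubes.
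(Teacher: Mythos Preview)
Your proposal is correct and takes exactly the paper's approach: the paper treats \cref{thm:right-adjoint-theorem} as an immediate consequence of \cref{prop:Rk-complete-Segal} without writing out a separate proof, and your argument simply makes explicit the standard formal reasoning (restriction of the presheaf-level adjunction to full subcategories, fully faithfulness of $\RKan_k$ along a fully faithful inclusion) that the paper leaves implicit.
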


We have an explicit description of the multi-ary morphism spaces of $\cF$: 
\begin{corollary}\label{cor:right-adj-description-on-F}
    Let $\cF$ be a unital $k$-restricted $\infty$-operad. For $n \geq k$, the space  
    $\Rk \cF(\CC{n})$ is the limit 
    \begin{equation}
        \lim_{I \in (\cP_{\leq k}(v(r)))^\op} \cF(\CC{I}).
    \end{equation}
    Here $r$ is the root and $\CC{I}$ is the closed sub $|I|$-corolla of $\CC{n}$ corresponding to $I$. In particular, $\Rk\cF$ has the same colors as $\cF$.
    Given colors $X_1, \cdots, X_n, Y$  in $\cF$, we have 
    \begin{equation}
        \Mul_{\Rk \cF}(X_1, \cdots, X_n; Y) = 
        \lim_{(i_1, \cdots , i_l) \in (\cP_{\leq k}(v(r)))^\op} 
        \Mul_{\cF}(X_{i_1}, \cdots, X_{i_l}; Y).
    \end{equation}
\end{corollary}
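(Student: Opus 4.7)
The plan is to unfold the right Kan extension definition of $\Rk\cF$ and then apply the cofinality and identification results already established in Section~\ref{sec:right-adjoint}. There are essentially two claims to verify: a formula for $\Rk\cF(\CC{n})$, and the corresponding formula on fibers giving the $\Mul$-description.

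First I would start from the definition
\begin{equation*}
\Rk\cF(\CC{n}) \simeq \lim_{X \in ((\Omegacleqk)_{/\CC{n}})^{\op}} \cF(X),
\end{equation*}
and invoke \cref{lem:right-adjoint-1} with $T = \CC{n}$: the canonical inclusion $(\Omegacleqk)_{^{\rsub}/\CC{n}} \hookrightarrow (\Omegacleqk)_{/\CC{n}}$ is a right adjoint, hence cofinal, so the limit may be restricted to rooted subtree inclusions. Next, I would apply \cref{ex:Cn-rsub}, which identifies $(\Omegacleqk)_{^{\rsub}/\CC{n}}$ with the poset $\cP_{\leq k}(v(r))$ via $I \mapsto \CC{I}$. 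Composing these identifications yields the desired equivalence
\begin{equation*}
\Rk\cF(\CC{n}) \simeq \lim_{I \in (\cP_{\leq k}(v(r)))^{\op}} \cF(\CC{I}).
\end{equation*}

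For the space of colors, specializing to $n = 0$ gives $v(r) = \varnothing$, so $\cP_{\leq k}(v(r))$ is the one-point poset $\{\varnothing\}$ and the limit degenerates to $\cF(\CC{0})$; hence $\Rk\cF$ and $\cF$ have the same space of colors. For the multi-ary morphism spaces, I would take fibers of the equivalence above over $(X_1, \ldots, X_n, Y) \in \cF(\CC{0})^{\times (n+1)} \simeq \Rk\cF(\CC{0})^{\times (n+1)}$. Since taking fibers of a space-valued functor commutes with limits, and for each $I = \{i_1, \ldots, i_l\}$ the fiber of $\cF(\CC{I}) \to \cF(\CC{0})^{\times(l+1)}$ over $(X_{i_1}, \ldots, X_{i_l}, Y)$ is by definition $\Mul_{\cF}(X_{i_1}, \ldots, X_{i_l}; Y)$ (\cref{nota:Opunleqk}), the formula for $\Mul_{\Rk\cF}(X_1, \ldots, X_n; Y)$ follows.

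This is essentially a bookkeeping argument — every nontrivial input is already in hand. The only point that requires a small amount of care is verifying that the transition maps in the limit indexed by $\cP_{\leq k}(v(r))$ are precisely the unit-insertion maps $\Mul_{\cF}(X_{i_1}, \ldots, X_{i_l}; Y) \to \Mul_{\cF}(X_{j_1}, \ldots, X_{j_m}; Y)$ for $\{j_1, \ldots, j_m\} \subset \{i_1, \ldots, i_l\}$, but this is immediate from the description of the equivalence in \cref{ex:Cn-rsub}, under which an inclusion $J \subset I$ in $\cP_{\leq k}(v(r))$ corresponds to the rooted subtree inclusion $\CC{J} \hookrightarrow \CC{I}$.
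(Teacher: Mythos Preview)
Your proposal is correct and follows the same route as the paper: the corollary is an immediate consequence of the right Kan extension formula together with \cref{lem:right-adjoint-1} and \cref{ex:Cn-rsub}, and the paper leaves it unproved for exactly this reason. Your treatment of the colors via $n=0$ and of the $\Mul$-spaces by passing to fibers is fine; the only cosmetic difference is that the colors statement also follows more directly from $\CC{0} \in \Omegacleqk$ and the full faithfulness of $\RKan_k$.
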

Intuitively, $n$-ary morphisms of $\Lk \cF$ is the space of collections of $(\leq k)$-ary morphisms that are compatible under taking units.

By \cref{obs:k-restricted-op-eq}, we can detect when a unital $k$-restricted $\infty$-operad $\cF$ is in the image of $\Rk$:
\begin{corollary}\label{cor:right-adj-image-of-Rk}
    Let $\cF$ be a unital $j$-restricted $\infty$-operad. The following are equivalent:
    \begin{enumerate}
        \item $\cF$ is in the image of $\Rk$.
        \item For any $k < n \leq j$, the canonical map 
    \begin{equation}
    \cF(\CC{n}) \to  \lim_{I \in (\cP_{\leq k}(v(r)))^\op} \cF(\CC{I})
    \end{equation}
    is an equivalence.
        \item For any $k < n \leq j$ and colors $X_1, \cdots, X_n, Y$ in $\cF$, the canonical map 
        \begin{equation}
        \Mul_{\cF}(X_1, \cdots, X_n; Y) \to 
        \lim_{(i_1, \cdots , i_l) \in (\cP_{\leq k}(v(r)))^\op} 
        \Mul_{\cF}(X_{i_1}, \cdots, X_{i_l}; Y)
        \end{equation}
        is an equivalence.
    \end{enumerate}    
\end{corollary}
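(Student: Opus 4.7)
The plan is to deduce this directly from the earlier theorems and corollaries. Since \cref{thm:right-adjoint-theorem} establishes that $\Rk$ is fully faithful, the statement ``$\cF$ is in the image of $\Rk$'' is equivalent to the statement that the counit $\varepsilon_\cF \colon \Rk(\cF^k) \to \cF$ is an equivalence in $\Opunleq{j}$. This reformulation is the pivot around which the whole argument turns.

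First I would prove (1) $\Leftrightarrow$ (2). By \cref{obs:k-restricted-op-eq}, $\varepsilon_\cF$ is an equivalence if and only if it induces an equivalence on $\cF(\CC{n})$ for every $0 \leq n \leq j$. Since $\Rk$ is a right adjoint to restriction along $i_k \colon \Omegacleqk \hookrightarrow \Omegacleq{j}$ and is fully faithful, the composite $(-)^k \circ \Rk$ is naturally equivalent to the identity on $\Opunleqk$; hence for $n \leq k$ the map $\Rk(\cF^k)(\CC{n}) \to \cF(\CC{n})$ is automatically an equivalence. For $k < n \leq j$, \cref{cor:right-adj-description-on-F} identifies
\begin{equation}
\Rk(\cF^k)(\CC{n}) \simeq \lim_{I \in (\cP_{\leq k}(v(r)))^\op} \cF(\CC{I}),
\end{equation}
and one verifies that under this identification $\varepsilon_\cF(\CC{n})$ is precisely the canonical map appearing in condition (2). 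This gives (1) $\Leftrightarrow$ (2).

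For (2) $\Leftrightarrow$ (3) I would pass to fibers over the space of colors. Since every $\CC{I}$ in the limit diagram sits over $\cF(\CC{0})^{\times(n+1)}$ via the inclusion $I \hookrightarrow v(r)$ (using the unit maps $\cF(\CC{0}) \to \cF(\overline{e})$ supplied by the trees $\overline{e} \to \CC{I}$ for $e \notin I$), the limit $\lim_{I} \cF(\CC{I})$ inherits a map to $\cF(\CC{0})^{\times(n+1)}$. Both sides of the comparison map have the same space of colors (both agree on $\CC{0}$), so by the third criterion of \cref{obs:k-restricted-op-eq} the map in (2) is an equivalence if and only if its induced map on fibers over each tuple $(X_1, \dots, X_n, Y)$ is an equivalence. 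Since limits commute with fiber products in $\Spaces$, the fiber of the right-hand side over $(X_1, \dots, X_n, Y)$ is exactly $\lim_{(i_1, \dots, i_l) \in (\cP_{\leq k}(v(r)))^\op} \Mul_{\cF}(X_{i_1}, \dots, X_{i_l}; Y)$, which yields condition (3).

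I do not expect a main obstacle: each of the pieces is already packaged in the corollaries just proved. The only slightly technical point is to check that the canonical map coming from the counit $\varepsilon_\cF$ really agrees with the canonical comparison map into the limit; this is a bookkeeping exercise tracing through the construction of $\RKan_k$ and the identification of $(\Omegacleqk)_{\rsubT}$ with $\Omegacrsub(T,k)$ in \cref{prop:Gamma-equivalence}, together with the observation that for $T = \CC{n}$ the category $\Omegacrsub(\CC{n}, k)$ reduces via $\phi$ to $\cP_{\leq k}(v(r))$ as in \cref{ex:Cn-rsub}.
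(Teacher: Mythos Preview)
Your approach is essentially the paper's own: the corollary is stated immediately after the line ``By \cref{obs:k-restricted-op-eq}, we can detect when a unital $k$-restricted $\infty$-operad $\cF$ is in the image of $\Rk$,'' and is left without further proof. Your unpacking via \cref{cor:right-adj-description-on-F} and the fiberwise argument for (2) $\Leftrightarrow$ (3) is exactly what that sentence is meant to encode.

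One correction: in the adjunction $(-)^k \dashv \Rk$ the relevant natural transformation is the \emph{unit} $\eta_\cF \colon \cF \to \Rk(\cF^k)$, not a counit, and it goes in the direction matching the canonical map in condition (2). Your map ``$\varepsilon_\cF \colon \Rk(\cF^k) \to \cF$'' does not exist as stated; what you want is that $\Rk$ being fully faithful makes the \emph{counit} $(-)^k \circ \Rk \Rightarrow \id$ an equivalence, and hence $\cF$ lies in the essential image of $\Rk$ if and only if the unit $\cF \to \Rk(\cF^k)$ is an equivalence. With that swap the argument goes through verbatim.
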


\begin{example}\label{ex:k-1-right-adjoint}
    Let $k = 1$. By \cref{ex:k-1-part-I} we see that $\Opunleq{1} \simeq \cat$. By \cref{cor:right-adj-description-on-F}, the fully faithful composite 
    \begin{equation}
        \cat \simeq \Opunleq{1} \xrightarrow{\mathrm{R}_1} \Op
    \end{equation}
    takes an $\infty$-category $\cC$ to the unital $\infty$-operad $\mathrm{R}_1 \cC$ with underlying $\infty$-category $\cC$ and 
    \begin{equation}
        \Mul_{\mathrm{R}_1 \cC}(X_1, \cdots, X_n; Y) = \prod_{i  = 1}^n \Hom_{\cC}(X_i, Y).
    \end{equation}
    This is the \emph{coCartesian} $\infty$-operad
    $\cC^{\sqcup}$ constructed in \cite[\S 2.4.3]{HA}. In particular, if the underlying $\infty$-category of $\cO$ is $\pt$, then $\mathrm{R}_1\cO \simeq \EE_\infty$.
\end{example}
Let us denote the localization functor $\Rk \circ (-)^k$ simply as $\Rk$.
Let $\cO$ be a unital $\infty$-operad, we have a co-filtration
\begin{equation}\label{eq:right-adjoint-co-fil}
    \cO \to  \cdots \to \Rk \cO \to \cdots \to \mathrm{R}_2 \cO \to \mathrm{R}_1 \cO
\end{equation}
The map $\cO \to \Rk \cO$ is an equivalence on $(\leq k)$-ary morphisms. This implies the following:
\begin{corollary}\label{cor:right-co-fil}
    Let $\cO$ be a unital $\infty$-operad, the canonical map 
    \begin{equation}
        \cO \to \lim_{k \in \mathbb{N}_{\geq 1}} \Rk \cO
    \end{equation}
    is an equivalence.
\end{corollary}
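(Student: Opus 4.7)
The plan is to verify this equivalence in two steps: first, to confirm that the right-hand side is still a unital $\infty$-operad, and then to check the equivalence objectwise on closed corollas via \cref{obs:Ck-check-equivalence}. Since both Segality and completeness are limit-type conditions on a presheaf (they say certain canonical maps are equivalences), and the forgetful functor $\Segcpl(\Omegac) \hookrightarrow \cP(\Omegac)$ preserves limits, the tower limit $\lim_k \Rk \cO$ is again a complete Segal presheaf on $\Omegac$ (taking limits commutes with itself in $\Spaces$). So I may treat $\lim_k \Rk \cO$ as an element of $\Opun$.

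Next I fix $n \geq 0$ and examine the map $\cO(\CC{n}) \to \lim_k \Rk\cO(\CC{n})$. Recall that under the overloaded notation, $\Rk \cO = \RKan_k(\cO^k)$, and the map $\cO \to \Rk \cO$ is exactly the unit of the adjunction $(-)^k \dashv \RKan_k$ from \cref{thm:right-adjoint-theorem}. Since $\RKan_k$ is fully faithful, this unit becomes an equivalence after restriction along $i_k$; concretely, whenever $T \in \Omegacleqk$, the right Kan extension formula gives $\RKan_k(\cO^k)(T) \simeq \cO^k(T) = \cO(T)$, and this identification coincides with the unit map. Applying this with $T = \CC{n}$, which belongs to $\Omegacleqk$ for all $k \geq n$, I conclude that $\cO(\CC{n}) \to \Rk\cO(\CC{n})$ is an equivalence whenever $k \geq n$.

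Consequently, the tower of spaces $(\Rk\cO(\CC{n}))_{k \geq 1}$ under the transition maps in \eqref{eq:right-adjoint-co-fil} is eventually constant at $\cO(\CC{n})$ starting from index $k = n$. Since the cofiltered limit over $k \geq 1$ is computed by any cofinal subtower, we obtain $\lim_{k} \Rk\cO(\CC{n}) \simeq \cO(\CC{n})$, with the canonical comparison being an equivalence. Invoking \cref{obs:Ck-check-equivalence} to upgrade this levelwise statement to an equivalence of Segal presheaves completes the argument.

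I do not expect any serious obstacle here: the proof is essentially bookkeeping around the facts that (i) right Kan extension along a fully faithful inclusion preserves values on the subcategory, (ii) limits of complete Segal presheaves are complete Segal, and (iii) Segal presheaves are detected on closed corollas. The one point worth stating carefully is that the transition maps $\Rk\cO \to \mathrm{R}_{k-1}\cO$ in the co-filtration are compatible with the canonical maps $\cO \to \Rk\cO$ — but this is automatic from the fact that $(-)^k$ factors as $(-)^{k-1}$ postcomposed with restriction, so the identifications $\cO(\CC{n}) \simeq \Rk\cO(\CC{n})$ for $k \geq n$ assemble into a coherent cone, as required.
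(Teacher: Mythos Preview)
Your proof is correct and takes essentially the same approach as the paper: the paper's entire argument is the single sentence ``The map $\cO \to \Rk \cO$ is an equivalence on $(\leq k)$-ary morphisms,'' and you have spelled out the details of how this yields the corollary via \cref{obs:Ck-check-equivalence} and the eventually-constant tower at each $\CC{n}$. Your additional care in checking that the limit remains a complete Segal presheaf and that the transition maps are compatible is welcome but not something the paper makes explicit.
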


\begin{example}\label{ex:E1-right-adjoint}
   Fix $k \geq 1$. We would like to understand $\Rk\AA_\infty$. 
   Let us give an abstract description for the multi-ary morphism spaces of $\AA_{\infty}$: given a set $S$, let $\CC{S}$ be the closed $|S|$-corolla with the set of maximal edges being $S$. Then $\AA_{\infty}(\CC{S})$ is the set of linear ordering on $S$. Furthermore, given an inclusion $S \subset S'$, the induced map $\AA_{\infty}(\CC{S'}) \to \AA_{\infty}(\CC{S})$ is simply given by restricting the ordering to $S$.
   By \cref{cor:right-adj-description-on-F}, we see that the space $\Rk\AA_{\infty}(n) = \Rk\AA_{\infty}(\CC{n})$ is equivalent to the set of linear orderings $\sigma_I$, one for each $I \subset \{1, \cdots, n\}$ of size $\leq k$, that are compactible under restricting. The map $\AA_{\infty}(n) \to \Rk\AA_{\infty}(n)$ takes a total order on $\{1, \cdots, n\}$ to the compatible family of induced total order on subsets $I \subset \{1, \cdots, n\}$ with $|I|\leq k$.
   Since total orders are determined by its restriction on pairwise elements, we see that for $k > 2$, the map $\AA_{\infty}(n) \to \Rk\AA_{\infty}(n)$ is injective for any $n$.
   
    For $k = 1$, $\mathrm{R}_1\AA_{\infty}(n)$ is simply $\pt$, hence $\mathrm{R}_1\AA_{\infty} \simeq \EE_\infty$. Note this also follows from \cref{ex:k-1-right-adjoint}. Now $k = 2$, an element in $\mathrm{R}_2\AA_{\infty}(n)$ is corresponds to a choice of total orderings  on pairs $\{i, j\} \subset \{1, \cdots, n\}$. Therefore $|\mathrm{R}_2\AA_{\infty}(n)| = 2^{\frac{n(n-1)}{2}}$.
    For $k \geq 3$, we claim that the canonical map $\AA_{\infty} \to \mathrm{R}_k\AA_{\infty}$ is an equivalence. It suffices to show that it is surjective:
    Given a set $S$, the pairwise total ordering $\sigma_{x,y}$ defines an inequality $<_\sigma$ on $S$, and the transitivity property is guaranteed by the total ordering on the triplets. 
   \end{example}
    To summarize, we have the following:
    \begin{proposition}\label{prop:co-filtration-for-AA_infty}
       The co-filtration \eqref{eq:right-adjoint-co-fil} for $\AA_\infty$ is of the form 
       \begin{equation}
           \AA_\infty \xrightarrow{\simeq} \cdots \xrightarrow{\simeq} \mathrm{R}_3\AA_\infty \to \mathrm{R}_2 \AA_\infty \to \mathrm{R}_1 \AA_\infty \simeq \EE_\infty.
       \end{equation}
    \end{proposition}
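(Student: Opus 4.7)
The plan is to apply \cref{cor:right-adj-description-on-F} to compute each $\Rk\AA_\infty(\CC{n})$ explicitly as a set of compatible families of linear orderings on subsets, and then to analyze in each range of $k$ whether this set equals the set of global linear orderings on $\{1,\ldots,n\}$. First I would recall that for any finite set $S$ the space $\AA_\infty(\CC{S})$ is the discrete set of linear orderings on $S$, with restriction maps along inclusions $I \subset S$ given by restricting the order to $I$. By \cref{cor:right-adj-description-on-F}, $\Rk\AA_\infty(n)$ is therefore identified with the set of families $(\sigma_I)$, where $I$ ranges over subsets of $\{1,\ldots,n\}$ of size $\leq k$ and $\sigma_I$ is a linear order on $I$, subject to the restriction compatibility $\sigma_{I'}|_I = \sigma_I$ whenever $I \subset I'$.

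From this description the two easy cases fall out immediately. For $k=1$ every such family is unique, so $\mathrm{R}_1\AA_\infty(n) \simeq \pt$, giving $\mathrm{R}_1\AA_\infty \simeq \EE_\infty$; this also matches \cref{ex:k-1-right-adjoint}. For $k=2$ the compatibility with singletons is vacuous, so the data reduces to a choice of linear order on each $2$-element subset, of which there are $2^{n(n-1)/2}$; for $n \geq 3$ this strictly exceeds $n! = |\AA_\infty(n)|$, so the canonical map $\AA_\infty \to \mathrm{R}_2\AA_\infty$ is genuinely non-invertible.

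The main step, and the one that really requires an argument, is the claim that $\AA_\infty \to \Rk\AA_\infty$ is an equivalence for $k \geq 3$. Injectivity of $\AA_\infty(n) \to \Rk\AA_\infty(n)$ is clear since a total order is determined by its pairwise restrictions. For surjectivity, given a compatible family $(\sigma_I)$, I would define a relation $\prec$ on $\{1,\ldots,n\}$ by declaring $i \prec j$ if and only if $\sigma_{\{i,j\}}$ orders $i$ before $j$. Totality and antisymmetry are automatic from the pairwise data, and the key transitivity property is precisely where the hypothesis $k \geq 3$ enters: if $i \prec j$ and $j \prec l$, then the three-element order $\sigma_{\{i,j,l\}}$ restricts to the given orders on $\{i,j\}$ and $\{j,l\}$, forcing it to be $i<j<l$, and restricting to $\{i,l\}$ yields $i \prec l$. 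The resulting total order visibly maps to the family $(\sigma_I)$, establishing surjectivity. The only substantive point is this triple-wise transitivity argument, so the main obstacle is really bookkeeping rather than a genuine structural hurdle; the conceptual content is already carried by the identification afforded by \cref{cor:right-adj-description-on-F}.
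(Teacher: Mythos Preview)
Your argument is correct and follows essentially the same route as the paper: the proof content is contained in \cref{ex:E1-right-adjoint}, which likewise uses \cref{cor:right-adj-description-on-F} to identify $\Rk\AA_\infty(n)$ with compatible families of linear orders on subsets of size $\leq k$, dispatches $k=1,2$ by direct counting, and for $k \geq 3$ obtains surjectivity via the transitivity-from-triples argument you give. Your write-up is slightly more explicit about totality, antisymmetry, and why the reconstructed order restricts to every $\sigma_I$, but the strategy is identical.
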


\bibliographystyle{amsalpha}
\bibliography{bib.bib}

\providecommand{\bysame}{\leavevmode\hbox to3em{\hrulefill}\thinspace}
\providecommand{\MR}{\relax\ifhmode\unskip\space\fi MR }
\providecommand{\MRhref}[2]{%
  \href{http://www.ams.org/mathscinet-getitem?mr=#1}{#2}
}
\providecommand{\href}[2]{#2}
\begin{thebibliography}{CHH18}

\bibitem[Bar18]{Barwick_2018}
Clark Barwick, \emph{From operator categories to higher operads}, Geometry \& Topology \textbf{22} (2018), no.~4, 1893--1959.

\bibitem[Bar22]{barkan2023}
Shaul Barkan, \emph{Arity approximation of $\infty$-operads}, arXiv preprint arXiv:2207.07200 (2022).

\bibitem[CH20]{MR4038556}
Hongyi Chu and Rune Haugseng, \emph{Enriched {$\infty$}-operads}, Adv. Math. \textbf{361} (2020), 106913, 85. \MR{4038556}

\bibitem[CHH18]{two-models}
Hongyi Chu, Rune Haugseng, and Gijs Heuts, \emph{Two models for the homotopy theory of infinity-operads}, Journal of Topology \textbf{11} (2018), no.~4, 857–873.

\bibitem[CM13]{MR3100887}
Denis-Charles Cisinski and Ieke Moerdijk, \emph{Dendroidal {S}egal spaces and {$\infty$}-operads}, J. Topol. \textbf{6} (2013), no.~3, 675--704. \MR{3100887}

\bibitem[G{\"o}p18]{goppl}
Florian G{\"o}ppl, \emph{A spectral sequence for spaces of maps between operads}, arXiv preprint arXiv:1810.05589 (2018).

\bibitem[Heu21]{MR4320769}
Gijs Heuts, \emph{Goodwillie approximations to higher categories}, Mem. Amer. Math. Soc. \textbf{272} (2021), no.~1333, ix+108. \MR{4320769}

\bibitem[JT07]{MR2342834}
Andr\'e{} Joyal and Myles Tierney, \emph{Quasi-categories vs {S}egal spaces}, Categories in algebra, geometry and mathematical physics, Contemp. Math., vol. 431, Amer. Math. Soc., Providence, RI, 2007, pp.~277--326. \MR{2342834}

\bibitem[KK24]{KK}
Alexander Kupers and Manuel Krannich, \emph{The {$\infty$}-operadic foundations for embedding calculus}, arXiv preprint arXiv:2409.10991 (2024).

\bibitem[Lur09]{HTT}
Jacob Lurie, \emph{Higher topos theory}, Annals of Mathematics Studies, vol. 170, Princeton University Press, Princeton, NJ, 2009. \MR{2522659}

\bibitem[Lur17]{HA}
\bysame, \emph{Higher algebra}, 2017, Available at the author's website (version dated 9/18/2017).

\bibitem[Moe21]{MR4222648}
Ieke Moerdijk, \emph{Closed dendroidal sets and unital operads}, Theory Appl. Categ. \textbf{36} (2021), Paper No. 5, 118--170. \MR{4222648}

\bibitem[Rez01]{rezk2001model}
Charles Rezk, \emph{A model for the homotopy theory of homotopy theory}, Transactions of the American Mathematical Society \textbf{353} (2001), no.~3, 973--1007.

\bibitem[Sta63]{stasheff1963homotopy}
James~Dillon Stasheff, \emph{Homotopy associativity of h-spaces. ii}, Transactions of the American Mathematical Society \textbf{108} (1963), no.~2, 293--312.

\end{thebibliography}

\end{document}